\newtheorem{thm}{Theorem}[section]
\newtheorem{lem}[thm]{Lemma}
\newtheorem{prop}[thm]{Proposition}
\theoremstyle{definition}
\newtheorem{defn}[thm]{Definition}
\theoremstyle{remark}
\newtheorem{rem}[thm]{Remark}
\newcommand{\R}{\mathbb R} 
\newcommand{\C}{\mathbb C} 
\newcommand{\E}{\mathbb E}
\newcommand{\tr}{\text{tr}}
\newcommand{\Tr}{\text{Tr}}
\renewcommand\Im{\operatorname{\mathfrak{Im}}}
\numberwithin{equation}{section}
\begin{document}
\title{Central Limit Theorem for Linear Eigenvalue Statistics for Submatrices of Wigner
Random Matrices}
\author{Lingyun Li
\thanks{Beijing Technology And Business University,  No. 11/33 Fangshan Road, Haidian District, Beijing, 100048, China, 
lilingyun@btbu.edu.cn}
\and
Matthew Reed
\thanks{ Department of Mathematics, University of California, Davis, One Shields Avenue, Davis, CA 95616-8633, USA, currently at Zumper, San Francisco, 
USA, matthewcreed86@gmail.com}
\and
Alexander Soshnikov
\thanks{Department of Mathematics, University of California, Davis, One Shields Avenue, Davis, CA 95616-8633, USA soshniko@math.ucdavis.edu;
research has been supported in part by the Simons Foundation award \#312391}}

\maketitle

\begin{abstract}
We prove the Central Limit Theorem for finite-dimensional vectors of linear eigenvalue statistics
of submatrices of Wigner random matrices under the assumption that test functions are sufficiently smooth.
We connect the asymptotic covariance to a family of correlated Gaussian Free Fields.
\end{abstract}
\section{Introduction}

Wigner random matrices were introduced by E. Wigner in the 1950s (see e.g. \cite{Wig1},\cite{Wig2}, \cite{Meh}) to study energy levels of heavy nuclei.
Let $ \{W_{jj}\}_{j=1}^{n}$ and $ \{W_{jk}\}_{1\leq j<k \leq n}$ be two independent families of
independent and identically distributed  real-valued random variables satisfying:
\begin{equation}
\E W_{jk } = 0, \ \ \E |W_{jk}|^{2} = 1 \ \  \text{for} \ \ j < k, \ \ \text{and} \ \
 \E[ W_{jj}^{2}] = \sigma^{2}.
 \label{condentries}
\end{equation}
Set $ W = (W_{jk})_{j,k=1}^{n}$ with $W_{jk} = W_{kj}$. The Wigner Ensemble of normalized
real symmetric $ n \times n$ matrices consists of matrices $M$ of the form
\begin{equation}
M = \frac{1}{\sqrt{n}}W.
\end{equation}

The archetypal example of a Wigner real symmetric random matrix is the Gaussian Orthogonal Ensemble (GOE) defined as (\cite{Meh})
\begin{equation}
A=\frac{1}{2}(B+B^t),
\end{equation}
where the entries of $B$ are i.i.d. real Gaussian random variables with zero mean and variance $1/2.$

Wigner Hermitian random matrices are defined in a similar fashion. Specifically, we assume that 
$ \{W_{jj}\}_{j=1}^{n}$ and $ \{W_{jk}\}_{1\leq j<k \leq n}$ are two independent families of
independent and identically distributed  real, correspondingly complex random variables satisfying (\ref{condentries}).
The archetypal example of a Wigner Hermitian random matrix is the Gaussian Unitary Ensemble (GUE) 
\begin{equation}
A=\frac{1}{2}(B+B^*),
\end{equation}
where  the entries of $B$ are i.i.d. complex standard Gaussian random variables (\cite{Meh}).

Over the last sixty years, Random Matrix Theory has developed many exciting connections to Quantum Chaos \cite{BGS}, Quantum Gravity \cite{FK}, 
Mesoscopic Physics \cite{SB}, Numerical Analysis \cite{ER}, Theoretical Neuroscience \cite{Som}, Optimal Control \cite{Chow}, Number Theory \cite{Keat}, 
Integrable Systems \cite{Har}, Combinatorics \cite{Rom}, Random Growth Models \cite{Joh}, Multivariate Statistics \cite{John}, 
and many other fields of Science and Engineering.

For a real symmetric (Hermitian) matrix $M$ of order $n,$ its  
empirical distribution of the eigenvalues is defined as 
$\mu_M = \frac{1}{n} \sum_{i=1}^{n} \delta_{\lambda_i},$ where  $\lambda_1 \leq \ldots \leq \lambda_n$ are the (ordered) eigenvalues of $M.$
The Wigner semicircle law states that for any bounded continuous test function $\varphi: \R \to \R,$ the linear statistic 
\begin{equation}
 \frac{1}{n} \sum_{i=1}^n \varphi(\lambda_i) = \frac{1}{n} \*\Tr(\varphi(M))=:\tr_n(\varphi(M)) 
\end{equation}
converges to $\int \varphi(x) \* d \mu_{sc}(dx) $ in probability, where $\mu_{sc}$ is determined by its density

\begin{equation}
\label{polukrug}
\frac{d \mu_{sc}}{dx}(x) = \frac{1}{2\pi} \sqrt{ 4 - x^2} \mathbf{1}_{[-2, 2]}(x),
\end{equation}
see e.g. \cite{Wig2},  \cite{BG}, \cite{cite_key1}.

The Gaussian fluctuation for linear statistics $\sum_{i=1}^n \varphi(\lambda_i)$ has been extensively studied since the pioneering paper by Jonsson 
\cite{Jon}. We refer the reader to \cite{Johansson1}, \cite{SinSo}, \cite{B.spectral}, \cite{cite_key3}, \cite{cite_key4}, \cite{Bandmodel}, \cite{LS},
\cite{LodS}, and references therein.
The goal of this paper is to prove the central limit theorem for the joint distribution of 
linear eigenvalue statistics for submatrices of Wigner random matrices. 

The rest of the paper is organized as follows. We formulate our results in Section 2. Theorem 2.1 is proved in Section 3. Theorem 2.2 is proved in 
Section 4. Auxiliary results are discussed in the Appendices.

Research of the last author has been partially supported by the Simons Foundation Collaboration Grant for Mathematicians \# 312391.

\section{Statement of Main Results}
This section is devoted to formulation of the main results of the paper.

For a generic random variable $\xi$, in what follows denote by $\xi^{\circ} := \xi -\E[\xi]$. For a finite set $B \subset  \{1,2,\ldots,n\}$ denote 
by $M(B)$ the submatrix of $M$ formed by the 
entries corresponding to intersections of rows and columns of $M$  marked by the indices in $B$, which 
inherits  the ordering. For example, 
\begin{equation}
M(\{1,3\}) = \begin{pmatrix} M_{11} & M_{13} \\ M_{31} & M_{33} \end{pmatrix}.  
\end{equation}

Let $ \mathcal{B}_{1}, \cdots, \mathcal{B}_{d}$ be infinite subsets  of $\mathbb{N}$ such that $\mathcal{B}_{i}, \ 1\leq i\leq d, $ and their pairwise 
intersections have positive densities.  Denote
\begin{eqnarray}
\label{u1}
& & B_{i}^n=\mathcal{B}_i \cap \{1,2,\ldots, n\}, \  1\leq i\leq d, \\
\label{u2}
& &  n_{i} = |B_{i}^n|,  \  1\leq i\leq d, \\
\label{u3}
& & n_{lm} = |B_{l}^n \cap B_{m}^n|,  \  1 \leq l \leq m \leq d.
\end{eqnarray}
We assume that the following limits exist:
\begin{equation}
\label{gamma_limits}
\gamma_{l} := \lim_{n \rightarrow \infty} \frac{n_{l}}{n} > 0,  
\ \ \gamma_{lm} := \lim_{n \rightarrow \infty} \frac{n_{lm}}{n}, \ \  1 \leq l \leq m \leq d.
\end{equation}
If it does not lead to ambiguity, we will omit the superindex $n$ in the notation for $B_{i}^n, \ 1\leq i\leq d.$
For an $n \times n$ matrix $M$ and $ B \subset \{1,2,\ldots, n\}$, 
consider a spectral linear statistic 
$ \sum_{l = 1}^{|B|} \varphi(\lambda_{l}), \ $
where $ \{\lambda_{l} \}_{l = 1}^{|B|} $ are the eigenvalues of the submatrix $ M(B)$. We are going to study the joint fluctuations of 
linear statistics of the eigenvalues. It will be beneficial later to view the submatrices from a different 
perspective. Consider the matrix $P^{B} = \text{diag}(P_{jj}^{B})$, which projects onto the subspace corresponding to indices in $B$, i.e.
\begin{equation}
\label{def-proj}
P_{jj}^{B} = \mathbf{1}_{\{j \in B\}}, \ 1 \leq j \leq n.
\end{equation}
Define
\begin{eqnarray}
& &  M^{B}_{}: = P^{B}MP^{B}, \\
& & \mathcal{N}_{B}[\varphi]:=  \  \sum_{l = 1}^{n} \varphi(\lambda_{l}^{B})  = Tr(\varphi(M^{B})),
\end{eqnarray}
where $ \{ \lambda_{l}^{B} \}_{l = 1}^{n}$ are the eigenvalues of $ M^{B}$. 
Note that the spectra of $M^{B}$ and $M(B)$ differ only by a zero eigenvalue of multiplicity $n-|B|.$ 
As a result, when we consider the linear statistics of their eigenvalues the extra terms $ (n-|B|)\*\varphi(0)$ cancel
once we center these random variables.  In general, when considering multiple sequences $B_l$, in order to simplify the notation we will write 
\begin{equation}
M^{(l)} := M^{B_l}, \ \ \ \ P^{(l)}:=P^{B_l},\ \ \ \ \mathcal{N}_n^{(l)}[\varphi] := \mathcal{N}_{B_l}[\varphi], 
\ \ \ \ \mathcal{N}_n^{(l)\circ}[\varphi]=\mathcal{N}_n^{(l)}[\varphi]-\E\{\mathcal{N}_n^{(l)}[\varphi]\}.
\end{equation}
Also, denote by $P^{(l,r)}$ the matrix which projects onto the subspace corresponding to the indices in the intersection $B_l \cap B_r$, i.e.
\begin{equation}
\label{def-proj1}
P^{(l,r)} = P^{(l)} P^{(r)}=P^{(r)} P^{(l)} .
\end{equation}
Recall that a test function $\varphi : \mathbb{R} \rightarrow \mathbb{R}$ belongs to the Sobolev space $\mathcal{H}_{s}$ if
\begin{equation}
||\varphi ||^{2}_{s}: = \int_{-\infty}^{\infty} (1+|t|)^{2s}|\widehat{\varphi}(t)|^{2}dt \ < \ \infty,
\label{ineq:fourier}
\end{equation}
where $\widehat{\varphi}$ is its Fourier transform.
First we consider Gaussian Wigner matrices.
\begin{thm}
\label{thm-GaussCLT}
Let  $W = \{W_{jk}:W_{jk}=W_{kj}\}_{j,k=1}^{n}$ be an $ n \times n$ real symmetric random matrix with Gaussian entries satisfying (\ref{condentries}) and $M = n^{-1/2}W$. Let $\mathcal{B}_1, \ldots \mathcal{B}_d$ be infinite subsets of $\mathbb{N}$ satisfying (\ref{u1}-\ref{gamma_limits}). Let $ \varphi_{1}, \cdots, \varphi_{d} : \mathbb{R} \rightarrow \mathbb{R}$ be test functions that satisfy the regularity condition $ ||\varphi_{l}||_{s} < \infty$, for some $ s > \frac{5}{2}. $ Then the random vector
\begin{equation}
\label{randvec}
  (\mathcal{N}_n^{(1)\circ}[\varphi_{1}], \dots, \ \mathcal{N}_n^{(d)\circ}[\varphi_{d}] ),
\end{equation}
converges in distribution to the zero mean Gaussian vector $ ( G_{1}, \cdots, G_{d}) \in \mathbb{R}^{d} $ 
with the covariance given by 
\begin{eqnarray}
\lefteqn{\mathbf{Cov}(G_{l}, G_{p})}\nonumber\\
 &=&\frac{\sigma^{2}}{4}\left( \varphi_{l}\right)_{1}\left(\varphi_{p}\right)_{1}(\frac{\gamma_{lp}}{\sqrt{\gamma_{l}\gamma_{p}}}) + \frac{1}{2}\sum^{\infty}_{k=2} k\left(\varphi_{l}\right)_{k}\left(\varphi_{p}\right)_{k} \left(\frac{\gamma_{lp}}{\sqrt{\gamma_{l}\gamma_{p}}}\right)^{k}\nonumber\\
&=&\frac{2}{\pi} \oint\limits_{ \begin{array}{cc}
            |z|^2=\gamma_{l} \\ \Im{z}>0
            \end{array} } 
\oint\limits_{\begin{array}{cc}
            |w|^2=\gamma_{p} \\ \Im{w}>0
            \end{array} }\varphi'_{l}\left(z+\frac{\gamma_{l}}{z}\right)\varphi'_{p} \left( w+\frac{\gamma_{p}}{w}\right)\frac{1}{2\pi}\ln\left| \frac{\gamma_{lp}-zw}{\gamma_{lp}-z\bar{w}} \right| \left( 1-\frac{\gamma_{l}}{z^{2}}  \right)\left(1-\frac{\gamma_{p}}{w^{2}}\right)dzdw\nonumber\\
          && + \ \frac{\gamma_{lp} (\sigma^{2} - 2)}{4 \pi^{2} \gamma_{l}\gamma_{p}}\int^{2\sqrt{\gamma_{l}}}_{-2\sqrt{\gamma_{l}}}  \frac{\lambda \varphi_{l}(\lambda)}{\sqrt{4\gamma_{l} - \lambda^{2}}}d\lambda\int^{2\sqrt{\gamma_{p}}}_{-2\sqrt{\gamma_{p}}}  \frac{\mu \varphi_{p}(\mu)}{\sqrt{4\gamma_{p} - \mu^{2}}}d\mu.
\label{GaussCov}
\end{eqnarray}

\end{thm}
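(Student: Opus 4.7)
The plan is to reduce to a dense family of polynomial test functions via a Chebyshev expansion, carry out a Gaussian chord-diagram computation to identify the limit covariance, and check that all higher cumulants vanish in the limit.

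\textbf{Step 1 (Reduction to Chebyshev polynomials).} By the semicircle law for the submatrix $M(B_l^n)$, its spectrum concentrates on $[-2\sqrt{\gamma_l},2\sqrt{\gamma_l}]$. Expand
\[
\varphi_l(x)\;=\;\sum_{k\ge 0} (\varphi_l)_k\, T_k\!\left(\tfrac{x}{2\sqrt{\gamma_l}}\right),
\]
whose coefficients obey $\sum_k k^{2s}|(\varphi_l)_k|^2<\infty$ for $s>5/2$ (equivalent to $\varphi_l\in\mH_s$ after a smooth truncation outside the support). A uniform variance bound
\[
\V\!\left(\mN_n^{(l)}\!\left[T_k(\cdot/2\sqrt{\gamma_l})\right]\right)\;\le\;C\,k,
\]
proved by the same chord-diagram argument used below, shows the Chebyshev tails contribute negligibly, so it suffices to prove joint convergence for fixed-degree polynomial $\varphi_l$. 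By Cram\'er--Wold this further reduces to a one-dimensional CLT for every linear combination $\sum_l c_l\,\mN_n^{(l)\circ}[p_l]$.

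\textbf{Step 2 (Gaussian chord diagrams).} For polynomial $p_l$ each $\mN_n^{(l)\circ}[p_l]$ is a polynomial in the Gaussian entries of $W$, so a Gaussian limit is determined entirely by the first two joint cumulants. Wick's theorem expresses
\[
\E\!\left[\Tr(M^{(l_1)})^{k_1}\cdots \Tr(M^{(l_r)})^{k_r}\right]
\]
as a sum over pair partitions $\sigma$ of the $k_1+\cdots+k_r$ entries drawn from the traces. Since $M^{(l)}_{ij}=n^{-1/2}W_{ij}\mathbf{1}_{\{i,j\in B_l\}}$, each chord forces both endpoints into the intersection of the two index sets it bridges, contributing (after summation over free indices) a factor $n^{-1}\cdot n_{l_{e^-}l_{e^+}}\to \gamma_{l_{e^-}l_{e^+}}$ per chord. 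Topological bookkeeping on the resulting ribbon graphs then yields: (i) $\mathrm{Cov}(\Tr(M^{(l)})^p,\Tr(M^{(p)})^q)$ admits an $O(1)$ limit saturated by planar pairings that join the two cycles; (ii) every joint cumulant of order $r\ge 3$ is $O(n^{2-r})$ and vanishes in the limit. The diagonal entries $W_{jj}$ carry variance $\sigma^2$ rather than $1$ and contribute only through a single degenerate chord, producing the isolated $\sigma^2$-dependent $k=m=1$ correction in (\ref{GaussCov}).

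\textbf{Step 3 (Identification of the limit and main obstacle).} Converting the sum over leading planar diagrams back into Chebyshev coefficients (in parallel with the single-matrix Wigner CLT, cf.\ \cite{cite_key1}) delivers the first line of (\ref{GaussCov}); the characteristic ratio $\gamma_{lp}/\sqrt{\gamma_l\gamma_p}$ emerges as the product of one intersection factor per shared chord, divided by the Joukowski scaling of $T_k$ on $[-2\sqrt{\gamma_l},2\sqrt{\gamma_l}]$. The contour-integral form then follows from the substitution $\lambda=z+\gamma_l/z$, under which $\sum_{k\ge 1}k(\gamma_{lp}/\sqrt{\gamma_l\gamma_p})^k(zw)^{-k}$ sums to the logarithmic kernel $-\tfrac{1}{2\pi}\ln|(\gamma_{lp}-zw)/(\gamma_{lp}-z\bar w)|$, exhibiting the covariance as an integral of $\varphi_l'\varphi_p'$ against the Green's function of a family of correlated Gaussian Free Fields; the additive $\sigma^2-2$ correction collects the gap between the Gaussian weight $\sigma^2/4$ at $k=1$ and the uniform weight $1/2$ that the contour formula assigns to every $k\ge 1$. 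I expect the main obstacle to lie in the joint combinatorics of Step 2(i): one must confirm that every leading-order planar pairing between the two cycles forces \emph{all} $k$ of its chords into $B_l\cap B_p$, and not merely a minimal connecting subset. It is this rigidity---which relies crucially on the positive-density assumption for pairwise intersections---that produces the exact weight $(\gamma_{lp}/\sqrt{\gamma_l\gamma_p})^k$ and the subsequent identification of the limit with correlated Gaussian Free Fields.
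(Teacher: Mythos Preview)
Your approach is a genuine alternative to the paper's. The paper does not use Wick/genus combinatorics for the CLT itself; it follows the Stein--Tikhomirov scheme of Lytova--Pastur \cite{cite_key3}: writing $Z_n(x)=\E[e^{ix\xi}]$ for $\xi=\sum_l\alpha_l\mathcal{N}_n^{(l)\circ}[\varphi_l]$, it derives via the Gaussian integration-by-parts formula an approximate Volterra equation for $Y_n^{(l)}(x,t)=\E[u_n^{(l)\circ}(t)e^{ix\xi}]$ with $u_n^{(l)}(t)=\Tr e^{itM^{(l)}}$, shows precompactness, passes to the limit, and solves the limiting equation explicitly. The submatrix structure enters through the bilinear form $\langle f,g\rangle_{lr}=\lim n^{-1}\E\Tr\{P^{(l)}f(M^{(l)})P^{(l,r)}g(M^{(r)})P^{(r)}\}$, which the paper diagonalizes (Lemma~\ref{lem:diag}) by a free-probability/Dyck-path computation in Chebyshev polynomials of the \emph{second} kind, ultimately reduced to the Chu--Vandermonde identity. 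Your moment/cumulant route is closer in spirit to Borodin \cite{cite_key2,B2} and is sound in outline; what the paper's method buys is a cleaner passage to non-polynomial $\varphi_l$, since the Poincar\'e/Shcherbina variance bounds (\ref{ineq-Poin})--(\ref{ineq-MS}) are uniform in $n$ and drive the compactness directly, whereas your Step~1 requires a uniform-in-$n$ (not merely limiting) version of $\V\big(\mN_n^{(l)}[T_k^{\gamma_l}]\big)\le Ck$, which the Gaussian genus expansion does supply but needs to be stated and proved. One point to sharpen in your Step~3: it is not that \emph{all} chords of a leading annular pairing lie in $B_l\cap B_p$; only the through-chords joining the two boundary cycles force their shared index into the intersection, while chords internal to each cycle stay in $B_l$ or $B_p$ alone. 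The Chebyshev diagonalization is precisely that for $T_k^{\gamma_l}$ only annular diagrams with exactly $k$ through-chords survive, and those $k$ factors of $\gamma_{lp}$, against the rescalings $\gamma_l^{k/2}\gamma_p^{k/2}$, yield $(\gamma_{lp}/\sqrt{\gamma_l\gamma_p})^k$.
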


In the expression for the covariance, $(\varphi_{l})_{k}$ denotes the coefficients in the expansion of $\varphi_{l}$ in the (rescaled) Chebyshev basis, i.e.
\begin{equation}
\label{eqn:chebyexp}
\varphi_{l}(x) = \sum^{\infty}_{k=0} (\varphi_{l})_{k} T^{\gamma_{l}}_k(x), \ \ \ (\varphi_{l})_{k} = \frac{2}{\pi} \int^{2\sqrt{\gamma_{l}}}_{-2\sqrt{\gamma_{l}}} \varphi_{l}(t) T^{\gamma_{l}}_{k}(t) \frac{dt}{\sqrt{4\gamma_{l}-t^{2}}}
\end{equation}
and 
\begin{equation}
T^{\gamma}_{k}(x) = cos\left(k\ arccos\left(\frac{x}{2\sqrt{\gamma}}\right)\right).
\end{equation}
Note the form of the kernel in the above contour integral expression for the covariance. Since it is the Greens function for the Laplacian 
on $\mathbb{H}$ with Dirichlet boundary conditions (appropriately scaled), we note that the limiting distributions form a family of 
correlated Gaussian free fields. This is consistent with the previous work of A. Borodin in \cite{cite_key2}, \cite{B2} 
for the covariance of linear eigenvalue statistics corresponding to polynomial test functions.
Now we formulate our result for non-Gaussian Wigner matrices. 
\begin{thm}
\label{thm-NG}
Let  $W = (W_{jk})_{j,k=1}^{n}$ be an $ n \times n$ random matrix and $M = n^{-1/2}W$. Let $\mathcal{B}_1, \ldots \mathcal{B}_d$ be infinite subsets of $\mathbb{N}$ satisfying (\ref{u1}-\ref{u3}) and (\ref{gamma_limits}). Assume the following conditions:\\
(1) All the entries of $W$ are independent random variables.\\
(2) The fourth moment of the non-zero off-diagonal entries does not depend on $n$:
\[\mu_4=\mathbb E\{W_{jk}^4\}.\]
(3) There exists a constant $\sigma_6$ such that for any $j,k$, $\E\{|W_{jk}|^{6}\}<\sigma_6$.\\
Let $ \varphi_{1}, \cdots, \varphi_{d} : \mathbb{R} \rightarrow \mathbb{R}$ be 
test functions that satisfy the regularity condition $ ||\varphi_{l}||_{s} < \infty$, for some $ s >5.5$. \ 
Then the random vector (\ref{randvec}) converges in distribution to the zero mean Gaussian vector 
$ ( \widetilde G_{1}, \cdots, \widetilde G_{d}) \in \mathbb{R}^{d} $ with the covariance given by 
\begin{equation}
\mathbf{Cov}(\widetilde G_{l}, \widetilde G_{p})=\mathbf{Cov}(G_l,G_p)+\frac{\kappa_4\gamma_{lp}^2}{2\pi^2\gamma_l^2\gamma_p^2 }\int_{-2\sqrt{\gamma_l}}^{2\sqrt{\gamma_l}}\varphi_l(\lambda)\frac{2\gamma_l-\lambda^2}{\sqrt{4\gamma_l-\lambda^2}}d\lambda
\int_{-2\sqrt{\gamma_p}}^{2\sqrt{\gamma_p}}\varphi_p(\mu)\frac{2\gamma_p-\mu^2}{\sqrt{4\gamma_p-\mu^2}}d\mu
\end{equation}
where $\mathbf{Cov}(G_l,G_p)$ is given by (\ref{GaussCov}).

\end{thm}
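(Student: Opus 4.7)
The plan is to reduce Theorem~\ref{thm-NG} to a joint CLT for the centered traces of the resolvents $G^{(l)}(z):=(zI-M^{(l)})^{-1}$, $l=1,\dots,d$, and then to compare the non-Gaussian model with its Gaussian counterpart by a generalized Stein / cumulant-expansion argument. Using an almost-analytic (Helffer--Sj\"ostrand) extension $\tilde\varphi_l$ of the test function, one represents
\begin{equation*}
\mN_n^{(l)\circ}[\varphi_l]=-\frac{1}{\pi}\int_{\C}\bar\partial\tilde\varphi_l(z)\,\bigl[\Tr G^{(l)}(z)\bigr]^{\circ}\,dA(z).
\end{equation*}
The regularity hypothesis $s>5.5$ is what is needed to truncate the contour at $|\Im z|\gtrsim n^{-\alpha}$ for a suitable $\alpha>0$ while keeping the truncation error negligible against polynomial-in-$n$ a-priori bounds on resolvent norms. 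After this step it suffices to prove a joint CLT for $\bigl([\Tr G^{(l)}(z_l)]^{\circ}\bigr)_{l=1}^{d}$, locally uniformly in $z_1,\dots,z_d$ off the real axis, with the $\kappa_4$-corrected covariance.

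The workhorse is the cumulant-expansion identity
\begin{equation*}
\E\bigl[W_{jk}\,F(W)\bigr]=\sum_{p=1}^{3}\frac{\kappa_{p+1}(W_{jk})}{p!}\,\E\bigl[\d_{W_{jk}}^{p}F(W)\bigr]+R_3,
\end{equation*}
where condition~(3) of the theorem (the $\sigma_6$ bound) controls the remainder uniformly after summation over $(j,k)$. I would apply this to each entry, using $\d_{W_{jk}}M^{(l)}=n^{-1/2}P^{(l)}(E_{jk}+E_{kj})P^{(l)}$, which vanishes unless $j,k\in B_l$. The $\kappa_2$-contribution reproduces the Gaussian covariance $\mathbf{Cov}(G_l,G_p)$ already derived for Theorem~\ref{thm-GaussCLT}. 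The $\kappa_3$-contribution is $o(1)$ by the parity argument familiar from the single-matrix Wigner CLT (an odd number of resolvent factors is produced, whose leading part cancels after averaging). The $\kappa_4$-contribution, arising from third derivatives of $\Tr G^{(l)}(z_l)\,\Tr G^{(p)}(z_p)$, after dropping off-diagonal resolvent entries via variance bounds collapses to
\begin{equation*}
\frac{\kappa_4}{n^{2}}\sum_{j,k\in B_l\cap B_p}\E\bigl[G^{(l)}_{jj}(z_l)^{2}\bigr]\,\E\bigl[G^{(p)}_{kk}(z_p)^{2}\bigr]+o(1);
\end{equation*}
integrating this against $\bar\partial\tilde\varphi_l\,\bar\partial\tilde\varphi_p$ and using the semicircle limit for the diagonal entries of $G^{(l)}$ (viewed as the resolvent of the rescaled Wigner matrix of size $n_l$ on $\ran P^{(l)}$) yields the factor $\gamma_{lp}^{2}/(\gamma_l^{2}\gamma_p^{2})$ and the Chebyshev-weighted integrals of $\varphi_l$ and $\varphi_p$ in the statement. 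Higher joint cumulants of the traces are shown to vanish in the limit by an analogous iteration of the same identity, so the limit is Gaussian.

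The principal difficulty is the combinatorial/analytic bookkeeping of the $\kappa_4$-term. One must verify that the sum produced by $\d_{W_{jk}}^{3}\bigl(\Tr G^{(l)}\,\Tr G^{(p)}\bigr)$ is, to leading order, concentrated on pairs $j,k\in B_l\cap B_p$ with both resolvent factors diagonal: this is exactly what produces the squared density $\gamma_{lp}^{2}$ rather than $\gamma_{lp}$ and the factorization of the integrals. Simultaneously one needs moment bounds on $G^{(l)}_{ab}(z)$ down to $|\Im z|\gtrsim n^{-\alpha}$; these can be inherited from the standard isotropic/local semicircle law applied inside the block $\ran P^{(l)}$, since $M^{(l)}$ restricted there is, after scaling by $\sqrt{n/n_l}$, a Wigner matrix of dimension $n_l$. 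Once these estimates are secured, the cancellation of the $\kappa_3$ term, identification of the new covariance, and decay of higher joint cumulants proceed in close parallel with the Gaussian case already treated in Theorem~\ref{thm-GaussCLT}.
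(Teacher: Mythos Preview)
Your approach is different from the paper's but broadly sound. The paper does \emph{not} pass through resolvents and Helffer--Sj\"ostrand; instead it stays with the unitary groups $U^{(l)}(t)=e^{itM^{(l)}}$ and the characteristic-function (Stein--Tikhomirov) framework already built for Theorem~\ref{thm-GaussCLT}. The decoupling formula is applied with $p=3$ directly to $\E\{W_{jk}U_{jk}(t_1)e_n^{\circ}(x)\}$, producing $Y_n=T_1+T_2+T_3+\mathcal E_n$: the Gaussian term $T_1$ reproduces the integral equation of the Gaussian proof, $T_2$ (the $\kappa_3$ piece) and the remainder $\mathcal E_n$ are shown to be $o(1)$ by Schwarz-type bounds on sums such as $n^{-3/2}\sum_{j,k}U_{jk}\widetilde U_{jk}\widetilde U_{jk}$, and $T_3$ survives only through the purely diagonal combinations $U_{jj}*U_{kk}$ and $\widetilde U_{jj}*\widetilde U_{kk}$ summed over $j,k\in B_1\cap B_2$, which in the limit contributes $i\kappa_4 xZ(x)\gamma_{12}^2 I(t)B_{\varphi_2}$ to the right-hand side of the integral equation for $Y$. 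Solving the modified equation yields the extra $\kappa_4$ term in the covariance. Gaussianity comes for free from the characteristic-function equation, so the paper never needs to bound higher joint cumulants separately.

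Two cautions about your route. First, your reliance on local/isotropic semicircle bounds down to $|\Im z|\gtrsim n^{-\alpha}$ is delicate under only a sixth-moment hypothesis; standard local laws typically require all moments (or subexponential tails), whereas the paper's Fourier approach works entirely with the bounded operators $e^{itM^{(l)}}$ and the variance bound $\mathbf{Var}\{u_n(t)\}\le C(\sigma_6)(1+|t|^3)^2$, so the $\sigma_6$ assumption is exactly what is consumed by the remainder in the $p=3$ decoupling formula. Second, the intermediate $\kappa_4$ expression that actually falls out of $\partial_{W_{jk}}^3$ is of the shape $G^{(l)}_{jj}G^{(l)}_{kk}\,G^{(p)}_{jj}G^{(p)}_{kk}$ (both indices in both resolvents), not $G^{(l)}_{jj}{}^{2}\,G^{(p)}_{kk}{}^{2}$ as you wrote; the limits agree because diagonal entries are asymptotically constant, but you should record the correct intermediate form so that the constraint $j,k\in B_l\cap B_p$ and hence the factor $\gamma_{lp}^2$ are seen to arise for the right reason.
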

In the course of the proof of Theorem \ref{thm-GaussCLT}, it has been necessary to understand the following bilinear form.
\begin{defn}
\label{def:bilin}
Let $M$ be a Wigner matrix satisfying (\ref{condentries}),  and let $P^{(l)}$, $P^{(l,r)}$  be the projection matrices defined in (\ref{def-proj}) and (\ref{def-proj1}). For functions $f,g \in \mathcal{H}_{s}, s > \frac{3}{2}$, define
\begin{eqnarray}
\langle f, g  \rangle_{lr} &:=& \lim_{n\to \infty} \frac{1}{n}\sum_{j,k \in B_l \cap B_r}\E\left[ f(M^{(l)})_{jk}\cdot g(M^{(r)})_{kj}\right]\nonumber\\
&=& \lim_{n \rightarrow \infty} \frac{1}{n}\E \left[ \Tr\left\{ P^{(l)} f(M^{(l)})\cdot P^{(l,r)}\cdot g(M^{(r)})P^{(r)} \right\}\right].\nonumber\\
\label{form:bilin}
\end{eqnarray}
\end{defn}
\begin{rem}
The bilinear form $\langle \cdot, \cdot  \rangle_{lr}$ is well defined on $ \mathcal{H}_{s}\times \mathcal{H}_{s}  $ as a consequence of 
Proposition \ref{lem:exten}. The bilinear form is also well defined for polynomial $f$ and $g$, see Subsection \ref{subsec:bilin} and also 
Lemma \ref{lem:diag} below.
\end{rem}
The following diagonalization lemma is an important technical tool for the proof of Theorem \ref{thm-GaussCLT}.
\begin{lem}
\label{lem:diag}
The two families $\{U^{\gamma_{l}}_{k}\}^{\infty}_{k=0}$ and $\{U^{\gamma_{r}}_{q}\}^{\infty}_{q=0}$ of rescaled Chebyshev polynomials of the second kind diagonalize the bilinear form (\ref{form:bilin}). More precisely,
\begin{equation}
\label{cool}
\frac{1}{\sqrt{\gamma_{l}\gamma_{r}}} \ \langle U^{\gamma_{l}}_{k}, U^{\gamma_{r}}_{q} \rangle_{lr} \ =  \ \delta_{kq} \ \left(\frac{\gamma_{lr}}{\sqrt{\gamma_{l}\gamma_{r}}}\right)^{k+1}.
\end{equation}

Let $f,g \in \mathcal{H}_s,$ for some $ s > \frac{3}{2}$. A consequence of (\ref{cool}) is that

\begin{equation}
\langle f, g  \rangle_{lr} = \frac{1}{4\pi^{2}\gamma_{l}\gamma_{r}} \int^{2\sqrt{\gamma_{l}}}_{-2\sqrt{\gamma_{l}}} \int^{2\sqrt{\gamma_{r}}}_{-2\sqrt{\gamma_{r}}} f(x)g(y)\left[ \sum^{\infty}_{k=0}  U^{\gamma_{l}}_{k}(x)U^{\gamma_{r}}_{k}(y)\frac{\gamma^{k+1}_{lr}}{\gamma^{k/2}_{l}\gamma^{k/2}_{r}} \right] \sqrt{4\gamma_{l}-x^{2}}\sqrt{4\gamma_{r}-y^{2}}dydx.
\label{wet2}
\end{equation}

\end{lem}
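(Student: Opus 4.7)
My plan is to compute the bilinear form first on the monomial basis via the Wigner moment method, and then identify the resulting two-variable generating function with that of the Chebyshev basis, from which the diagonalization falls out. For $f(x)=x^k$, $g(y)=y^q$, expanding $(M^{(l)})^k=(P^{(l)}MP^{(l)})^k$ and $(M^{(r)})^q=(P^{(r)}MP^{(r)})^q$ in indices gives
\[
\tfrac{1}{n}\E\bigl[\Tr(P^{(l)}(M^{(l)})^k P^{(l,r)}(M^{(r)})^q P^{(r)})\bigr]=\tfrac{1}{n^{1+(k+q)/2}}\sum\E[W_{i_0i_1}\cdots W_{i_{k+q-1}i_0}],
\]
summed over closed index walks $(i_0,\dots,i_{k+q-1})$ constrained by $i_0,i_k\in B_l\cap B_r$, $i_j\in B_l$ for $1\le j\le k-1$, and $i_j\in B_r$ for $k+1\le j\le k+q-1$. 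By the standard Wigner argument, as $n\to\infty$ only tree walks (non-crossing pair partitions of the $k+q$ edges, equivalently Dyck paths of length $k+q$) contribute at leading order.

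The main combinatorial step, which I expect to be the principal obstacle, is to parametrize such walks by the Dyck path height $h:=h_k$ at the marked time $k$ and to identify which vertices are visited in both halves. Every contributing walk is a depth-first traversal of a planar rooted tree, and for such a DFS a vertex is revisited in both time windows $[0,k]$ and $[k,k+q]$ if and only if it lies on the tree geodesic from the root $v_0$ to the pivot vertex $v_k$. This geodesic has exactly $h+1$ vertices, which must lie in $B_l\cap B_r$; the remaining $(k-h)/2$ phase-$1$ vertices must lie in $B_l$ and the $(q-h)/2$ phase-$2$ vertices must lie in $B_r$. Denoting by $N_1(k,h)$ the number of nonnegative lattice paths from $0$ to $h$ of length $k$, this produces
\[
\langle x^k,y^q\rangle_{lr}=\sum_{h\ge 0}N_1(k,h)\,N_1(q,h)\,\gamma_{lr}^{h+1}\gamma_l^{(k-h)/2}\gamma_r^{(q-h)/2}.
\]

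Next, using the identity $\sum_k N_1(k,h)s^k=s^h B(s^2)^{h+1}$ for the Catalan generating function $B$, the substitutions $s=1/z$, $t=1/w$, and the Stieltjes relation $(1/z)B(\gamma_l/z^2)=g_{\gamma_l}(z)$, the double series resums to
\[
\langle (z-x)^{-1},(w-y)^{-1}\rangle_{lr}=\frac{\gamma_{lr}\,g_{\gamma_l}(z)g_{\gamma_r}(w)}{1-\gamma_{lr}g_{\gamma_l}(z)g_{\gamma_r}(w)}.
\]
Parametrizing $z=\sqrt{\gamma_l}(u^{-1}+u)$ with $|u|<1$ so that $g_{\gamma_l}(z)=u/\sqrt{\gamma_l}$, and similarly $w=\sqrt{\gamma_r}(v^{-1}+v)$, $g_{\gamma_r}(w)=v/\sqrt{\gamma_r}$, I invoke the Chebyshev generating-function identity $\sum_k U^{\gamma_l}_k(x)u^k=\sqrt{\gamma_l}/[u(z-x)]$ to obtain
\[
\sum_{k,q\ge 0}\langle U^{\gamma_l}_k,U^{\gamma_r}_q\rangle_{lr}\,u^kv^q=\frac{\sqrt{\gamma_l\gamma_r}}{uv}\,\langle(z-x)^{-1},(w-y)^{-1}\rangle_{lr}=\frac{\gamma_{lr}}{1-\gamma_{lr}uv/\sqrt{\gamma_l\gamma_r}},
\]
and reading off coefficients of $u^kv^q$ gives the diagonalization (\ref{cool}).

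Finally, to deduce (\ref{wet2}) for general $f,g\in\mathcal{H}_s$, I expand $f$ and $g$ in the Chebyshev bases $\{U^{\gamma_l}_k\}$ and $\{U^{\gamma_r}_k\}$ (orthonormal with respect to $\sqrt{4\gamma_l-x^2}/(2\pi\gamma_l)\,dx$ and the analogous weight), apply (\ref{cool}) termwise, and invoke continuity of $\langle\cdot,\cdot\rangle_{lr}$ on $\mathcal{H}_s\times\mathcal{H}_s$ from Proposition \ref{lem:exten} (cited in the remark) to justify interchanging the resulting series with the double integral.
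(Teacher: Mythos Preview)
Your argument is correct, and the combinatorial core---parametrizing contributing tree walks by the Dyck-path height $h$ at the pivot time $k$, and identifying the $h+1$ geodesic vertices as exactly those forced into $B_l\cap B_r$---matches the paper's computation (which phrases the same thing via asymptotic freeness and non-crossing complements, arriving at the identical formula $\langle x^k,y^q\rangle_{lr}=\sum_h N_1(k,h)N_1(q,h)\gamma_{lr}^{h+1}\gamma_l^{(k-h)/2}\gamma_r^{(q-h)/2}$). Where you diverge is in the diagonalization step: the paper expands $\langle (x/2\sqrt{\gamma_l})^k, U^{\gamma_r}_q\rangle_{lr}$ explicitly in the ballot numbers, reduces the inner sum to a terminating ${}_2F_1$, and kills the off-diagonal terms with the Chu--Vandermonde identity. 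You instead resum the two-variable generating series through the Catalan identity $\sum_k N_1(k,h)s^k=s^hB(s^2)^{h+1}$ and the Stieltjes relation $z^{-1}B(\gamma_l/z^2)=g_{\gamma_l}(z)$, obtaining the closed resolvent form $\langle (z-x)^{-1},(w-y)^{-1}\rangle_{lr}=\gamma_{lr}g_{\gamma_l}(z)g_{\gamma_r}(w)/(1-\gamma_{lr}g_{\gamma_l}(z)g_{\gamma_r}(w))$, from which the Chebyshev diagonalization is read off via the Joukowski substitution. Your route is shorter and yields the resolvent identity as a byproduct (useful in its own right); the paper's route is more hands-on but makes the connection to the Chu--Vandermonde identity explicit, which is what links this problem to the band-matrix CLT the authors reference. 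The extension to $f,g\in\mathcal{H}_s$ by Chebyshev expansion plus Proposition~\ref{lem:exten} is the same in both.
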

In Subsection \ref{subsec:bilin} it will also be proved that, with $f,g$ given as above,  almost surely 
\begin{eqnarray}
\lefteqn{\lim_{n \rightarrow \infty} \frac{1}{n} \Tr\left\{ P^{(l)} f(M^{(l)})\cdot P^{(l,r)}\cdot g(M^{(r)})P^{(r)} \right\}}\nonumber\\
&=& \frac{1}{4\pi^{2}\gamma_{l}\gamma_{r}} \int^{2\sqrt{\gamma_{l}}}_{-2\sqrt{\gamma_{l}}} \int^{2\sqrt{\gamma_{r}}}_{-2\sqrt{\gamma_{r}}} f(x)g(y)\left[ \sum^{\infty}_{k=0}  U^{\gamma_{l}}_{k}(x)U^{\gamma_{r}}_{k}(y)\frac{\gamma^{k+1}_{lr}}{\gamma^{k/2}_{l}\gamma^{k/2}_{r}} \right] \sqrt{4\gamma_{l}-x^{2}}\sqrt{4\gamma_{r}-y^{2}}dydx.\nonumber\\
\label{safe}
\end{eqnarray}

\begin{rem}
\par
Recall that the rescaled Chebyshev polynomials of the second kind are orthonormal with respect to the Wigner semicircle law, i.e.
\begin{equation}
\label{orthogcheby2}
\frac{1}{2\pi \gamma}\int^{2\sqrt{\gamma}}_{-2\sqrt{\gamma}} U^{\gamma}_{k}(x)U^{\gamma}_{q}(x) \sqrt{4\gamma-x^{2}}dx = \delta_{kq}.
\end{equation}
Also,
\begin{equation}
U^{\gamma}_{k}(2\sqrt{\gamma}\cos(\theta)) = \frac{\sin((k+1)\theta)}{\sin(\theta)}.
\end{equation}
\end{rem}
The proof of Theorem \ref{thm-GaussCLT} appears in Section 3 and the proof of Theorem \ref{thm-NG} appears in Section 4.

\begin{rem}
\par
Theorems \ref{thm-GaussCLT} and  \ref{thm-NG} prove convergence of finite-dimensional distributions. This paper does not address the functional convergence
which would require a tightness result.
\end{rem}


\section{Proof of Theorem 2.1}

\subsection{Stein-Tikhomirov Method}
We follow the approach used by A. Lytova and L. Pastur in \cite{cite_key3} for the full Wigner matrix case, see also \cite{lp}, \cite{shch1}, 
\cite{shch2}. Essentially, 
it is a modification of the Stein-Tikhomirov method (see e.g. \cite{Cek}). 
This approach was also used to prove the CLT for linear eigenvalue statistics of band 
random matrices in \cite{LS}, which is connected to our work through the Chu-Vandermonde identity (see Subsection \ref{subsec:bilin}). 
While several steps of our proof are similar to the ones
 in \cite{cite_key3}, 
the fact that we are dealing with submatrices introduces new technical difficulties.

We will prove Theorem \ref{thm-GaussCLT} in the present section and extend the technique to non-Gaussian Wigner matrices later.  
The following inequalities will be used often. As a consequence of the Poincar\'{e} inequality, 
one can bound from above the variance of $\Tr \varphi(M)$
for a differentiable test functions $\varphi$ as

\begin{eqnarray}
\label{ineq-Poin}
\label{Poin-1}
\mathbf{Var}\{\Tr \varphi(M)\} & \leq &\frac{4(\sigma^{2}+1)}{n} \E\left[\Tr\{\varphi'(M) (\varphi'(M))^{*}\}\right]\\
&\leq& 4(\sigma^{2}+1)\left( \sup_{x \in \R} |\varphi'(x)|\right)^2.
\label{Poin-2}
\end{eqnarray}
We refer the reader to \cite{cite_key3} for the details. The next inequality is due to M. Shcherbina, see \cite{cite_key4}. 
Let $s > 3/2$ and $ \varphi \in \mathcal{H}_{s}$. Then there is a constant $C_s > 0$, so that
\begin{equation}
\label{ineq-MS}
\mathbf{Var}\{  \Tr \varphi(M) \} \leq C_{s}||\varphi ||^{2}_{s}.
\end{equation}
Let $\epsilon > 0$ and set $ s = \frac{5}{2}+\epsilon$. Recall that the regularity assumption on the test functions is that $||\varphi_{l} ||_{5/2+\epsilon} < \infty $, for $ 1 \leq l \leq d$. There exists a $ C_\epsilon > 0$ so that
\begin{equation}
\label{eqn:Varcontrol}
\mathbf{Var}\{  \mathcal{N}^{(l)}[\varphi_{l}] \} = 
\mathbf{Var}\{ \Tr \varphi_l(M(B_l)) \} \leq C_{\epsilon}||\varphi_{l} ||^{2}_{5/2+\epsilon}.
\end{equation}
The inequality holds because of (\ref{ineq-MS}), since $M(B_l)$ is an ordinary $|B_l|\times |B_l|$ Gaussian Wigner matrix. 
We note that the bound is $n$-independent.

It is sufficient  to prove the CLT for all linear combinations of the components of the random vector (\ref{randvec}). 
Consider a linear combination $  \xi := \sum^{d}_{l = 1} \alpha_{l} \mathcal{N}^{(l)\circ}[\varphi_{l}] $, and denote the characteristic function by
 
\begin{equation} 
  Z_{n}(x) = \E[ e^{ix\xi}].
\end{equation}
  
It is a basic fact that the characteristic function 
of the Gaussian distribution with variance $V$ is given by
 
\begin{equation}
Z(x) := e^{-x^2V/2}.
\label{eqn:char}
\end{equation}
As a consequence of the Levy Continuity theorem, to prove theorem  \ref{thm-GaussCLT} it will be sufficient to demonstrate that
for each $ x \in \mathbb{R}$,
\begin{equation}
\label{eqn:fin}
\lim_{n \rightarrow \infty} Z_{n}(x) = Z(x),
\end{equation}
where $Z(x)$ is given as above with
\begin{equation}
\label{eqn:vard}
V := \ \lim_{n \to \infty} \ \left[ \sum_{l = 1}^{d} \alpha^{2}_{l}  \mathbf{Var}\left(\mathcal{N}_n^{(l)\circ}[\varphi_{l}]\right)  + \ 
2  \sum_{1 \leq l < r \leq d} \alpha_{l}\alpha_{r}  \mathbf{Cov}\left(\mathcal{N}_n^{(l)\circ}[\varphi_{l}],\ \mathcal{N}_n^{(r)\circ}[\varphi_{r}]\right)\right].
\end{equation}
So $V$ is the limiting variance of $\xi$. 
It will be demonstrated that $ Z_{n}(x)$ converges uniformly to the solution of the following equation

\begin{equation}
Z(x) \ = \ 1 - V \int_{0}^{x} yZ(y)dy.
\label{eqn:Z}
\end{equation}

Note that (\ref{eqn:char}) is the unique solution of  (\ref{eqn:Z}) within the class of bounded and continuous functions. Therefore, to prove the theorem, it is sufficient to demonstrate that the pointwise limit of $Z_n(x)$ is a continuous and bounded function which satisfies equation (\ref{eqn:Z}), with $V$ given by (\ref{eqn:vard}).

Observe that
\begin{equation}
Z'_{n}(x) = i\E[\xi e^{ix\xi}]=i\sum^{d}_{l=1}\alpha_{l} \E\{\mathcal{N}_n^{(l)\circ}[\varphi_{l}] e^{ix\xi}\}.
\label{joyjoy}
\end{equation}

Now it follows by the Cauchy-Schwartz inequality and (\ref{eqn:Varcontrol}) that
\begin{eqnarray}
\label{heyo}
\left| Z'_{n}(x) \right|  &\leq& \sum^{d}_{l=1}|\alpha_{l}| \sqrt{\mathbf{Var}\{ \mathcal{N}^{(l)}[\varphi_{l}] \} }\leq Const\sum^{d}_{l=1}|\alpha_{l}| \ ||\varphi_{l}||_{5/2+\epsilon}.
\end{eqnarray}

Since
 $ Z_{n}(0) = 1$, we have by the fundamental theorem of calculus that

\begin{equation}
\label{eqn:FTCapp}
Z_{n}(x) = 1 + \int^{x}_{0} Z'_{n}(y)dy.
\end{equation}

Then to prove the CLT it is sufficient to show that any uniformly converging subsequences $\{Z_{n_m}\}$ and $\{Z'_{n_m}\}$, satisfy
\begin{equation}
\label{eqn:Zsub}
\lim_{n_m  \to \infty} Z_{n_m}(x) = Z(x),
\end{equation}
and
\begin{equation}
\label{eqn:Zprime}
\lim_{n_m \to \infty} Z'_{n_m}(x)  = -xVZ(x).
\end{equation}

A pre-compactness argument based on the Arzela-Ascoli theorem will be developed below, which ensures that the subsequences converge uniformly, implying that the limit is a continuous function. The estimate $|Z_n(x)| \leq 1$, for all $n$, shows that the sequence is uniformly bounded. Generally we will abuse the subsequence notation by writing $\{n \}$ for a uniformly converging subsequence. Since (\ref{heyo}) combined with $||\varphi_{l}||_{5/2+\epsilon} < \infty$ justify an application of  the dominated convergence theorem in (\ref{eqn:FTCapp}), it follows from (\ref{eqn:Zsub}) and (\ref{eqn:Zprime}) that the limit of $Z_n(x)$ satisfies equation (\ref{eqn:Z}). Therefore the pointwise limit (\ref{eqn:fin}) holds. We turn our attention to the pre-compactness argument, and will argue later that (\ref{eqn:Zsub}) and (\ref{eqn:Zprime}) hold. Similar notation is used as in \cite{cite_key3}. Denote by
\begin{eqnarray}
&&D_{jk}:=\partial/\partial M_{jk}\label{D_jk};\\
&&U^{(l)}(t):=e^{itM^{(l)}}, \ U^{(l)}_{jk}(t):=(U^{(l)}(t))_{jk};\label{ujk}\\
&&u^{(l)}_{n}(t):=\Tr \{P^{(l)}U^{(l)}(t)P^{(l)}\}, \  u^{(l)\circ}_{n}(t):=u^{(l)}_{n}(t)-\E\{u^{(l)}_{n}(t)\}.\label{moskva}
\end{eqnarray}
For the benefit of the reader, what is needed is recorded below. Recall that $U^{(l)}(t)$ is a unitary matrix, and writing $ \beta_{jk} := (1+\delta_{jk})^{-1}$, we have
\begin{equation}
|U^{(l)}_{jk}|\leq 1, \ \sum_{k=1}^n|U^{(l)}_{jk}|^2=1, \  \|U^{(l)}\|=1.
\label{Uprop}
\end{equation}
Moreover,
\begin{equation}
D_{jk}U^{(l)}_{ab}(t)=i\beta_{jk} \mathbf{1}_{\{j,k \in B_{l}\}}\left(U^{(l)}_{aj}*U^{(l)}_{bk}(t)+U^{(l)}_{ak}*U^{(l)}_{bj}(t)\right),
\label{dif}
\end{equation}
where 
\begin{equation}
\label{svertka}
f*g(t):=\int_0^t f(y)\*g(t-y)\*dy.
\end{equation}
Applying the Fourier inversion formula
\begin{equation}
\label{eqn:invfour}
\varphi_{l}(\lambda)=\int_{-\infty}^{\infty} e^{it\lambda}\widehat{\varphi}_{l}(t)dt,
\end{equation}
it follows that
\begin{equation}
\mathcal{N}^{(l)\circ}[\varphi_{l}] =  \int^{\infty}_{-\infty} \widehat{\varphi}_{l}(t) u^{(l)\circ}_{n}(t) dt.
\end{equation}

Now define

\begin{equation}
\label{def_en}
e_{n}(x) := e^{ix\xi}.
\end{equation}

Using the Fourier representation of the linear eigenvalue statistics in (\ref{joyjoy}), it follows that
\begin{equation}
\label{eqn:deriv}
 Z'_{n}(x) = i \ \sum^{d}_{l = 1} \alpha_{l} 
  \int^{\infty}_{-\infty} \widehat{\varphi}_{l}(t) Y^{(l)}_{n}(x,t) dt,
\end{equation}
where
\begin{equation}
Y^{(l)}_{n}(x,t) := \E \left[u^{(l)\circ}_{n}(t) e_n(x) \right].
\end{equation}

The limit of $Y^{(l)}_n(x,t)$ is determined later in the proof. Since 
\begin{equation}
  \overline{Y^{(l)}_{n}(x,t)} = Y^{(l)}_{n}(-x,-t),
\end{equation}
we need only consider $t \geq 0$. It will now be demonstrated that each sequence $\{Y^{(l)}_{n}\}$ is bounded and equicontinuous on compact subsets of $\{x \in \mathbb{R}, t\geq0\}$, and that every uniformly converging subsequence has the same limit $Y^{(l)}$, implying (\ref{eqn:Zsub}) and (\ref{eqn:Zprime}). See proposition \ref{superman}.

Let $\varphi(x) = e^{itx}$, and note that $\sup_{x \in \R} |\varphi'(x)| = |t|$. Applying the inequality (\ref{Poin-2}) to the linear eigenvalue statistic $ \mathcal{N}^{(l)}[\varphi]$, we obtain

\begin{equation}
\label{eqn:u}
\mathbf{Var}\{u^{(l)}_{n}(t)\} = \mathbf{Var}\{ \mathcal{N}^{(l)}[\varphi]\} \leq 4(\sigma^2 + 1)t^2.
\end{equation}

Now set $\varphi(x) = ixe^{itx}$, and notice that 
\[\frac{d}{dt}u^{(l)}_{n}(t) =  i \Tr \{ M^{(l)}e^{itM^{(l)}}  \}.  \]
Using the inequality (\ref{Poin-1}) and the fact that $n^{-1}\E\Tr (M^{(l)})^2 \leq \sigma^2+1$, it follows that
\begin{eqnarray}
\label{eqn:uprime}
\mathbf{Var}\{\frac{d}{dt}u^{(l)}_{n}(t)\} &\leq& \frac{4(\sigma^{2}+1)}{n} \E\left[\Tr\{\varphi'(M^{(l)}) (\varphi'(M^{(l)}))^{*}\}\right]\nonumber\\
&\leq& \frac{4(\sigma^{2}+1)}{n} \E\left[\Tr\{ 1+t^2(M^{(l)})^2 \}\right]\nonumber\\
&\leq& 4(\sigma^2+1)[1+(\sigma^2+1)t^2].
\end{eqnarray}
Using the Cauchy-Schwartz inequality, the bound $|e_{n}(x)| \leq 1,$ (\ref{eqn:u}) and (\ref{eqn:uprime}), we obtain
\begin{eqnarray}
\label{fish}
\left|Y^{(l)}_{n}(x,t)\right| \leq \mathbf{Var}^{1/2}\{ u^{(l)}_{n}(t) \} \leq  2(\sigma^2+1)^{1/2}|t|,
\end{eqnarray}
and also
\begin{equation}
\label{duck}
\left|\frac{\partial}{\partial t} Y^{(l)}_{n}(x,t)\right| \leq \mathbf{Var}^{1/2}\{\frac{d}{dt}u^{(l)}_{n}(t)  \} \leq  2\sqrt{(\sigma^2+1 + (\sigma^2+1)^2t^2)}.
\end{equation}

Observe that 
\[\frac{d}{dx}e_n(x) = ie_n(x)\sum^d_{r=1} \alpha_r \  \mathcal{N}^{(r)\circ}[\varphi_r].\]
Using the above derivative with the Cauchy-Schwartz inequality, (\ref{eqn:Varcontrol}) and (\ref{eqn:u}), we have that
\begin{eqnarray}
\label{goose}
\left|\frac{\partial}{\partial x} Y^{(l)}_{n}(x,t)\right| &=& \left|i\sum^d_{r=1}\alpha_r \ \E[u^{(l)\circ}_n(t)  \mathcal{N}_n^{(r)\circ}[\varphi_r] e_n(x)]  \right|\nonumber\\
&\leq&  \mathbf{Var}^{1/2}\{ u^{(l)}_{n}(t) \}  \sum^d_{r=1} |\alpha_r| \ \mathbf{Var}^{1/2}\{ \mathcal{N}^{(r)}[\varphi_r]  \}\nonumber\\
&\leq& Const \cdot |t| \sum^d_{r=1} |\alpha_r| \ ||  \varphi_r ||_{5/2+\epsilon}.
\end{eqnarray}

It follows from (\ref{fish}), the mean value theorem combined with (\ref{duck}) and (\ref{goose}), and $ ||  \varphi_r ||_{5/2+\epsilon}<\infty$, that each sequence $Y^{(l)}_n(x,t)$ is bounded and equicontinuous on compact subsets of $\R^2 $. The following proposition justifies this restriction.
\begin{prop}
\label{superman}
In order to prove the functions $Y^{(l)}_n(x,t)$ converge uniformly to appropriate limits so that (\ref{eqn:deriv}) implies (\ref{eqn:Zprime}), it is sufficient to prove the convergence of $Y^{(l)}_n(x,t)$ on  arbitrary compact subsets of $\{ x \in \R, t \geq 0\}$.
\end{prop}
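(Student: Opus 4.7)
The plan is to show that uniform-on-compacts convergence of $Y^{(l)}_n(x,t)$ suffices to pass to the limit in the infinite integral (\ref{eqn:deriv}), by controlling the tails of the integrand in $t$ uniformly in $n$. The bounds (\ref{fish}), (\ref{duck}), and (\ref{goose}) already establish that $\{Y^{(l)}_n\}$ is uniformly bounded and equicontinuous on compact subsets of $\{x\in \R,\, t\geq 0\}$; combined with the symmetry $\overline{Y^{(l)}_n(x,t)} = Y^{(l)}_n(-x,-t)$, which propagates convergence from $t\geq 0$ to $t\leq 0$, the Arzela--Ascoli theorem produces from every subsequence of $\{Y^{(l)}_n\}$ a further subsequence converging uniformly on compact subsets of $\R^2$ to some continuous function $Y^{(l)}(x,t)$. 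The only thing to verify is that convergence on compacts entails convergence of the full integral over $t\in \R$.

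To this end, fix $T>0$ and $x\in \R$, and split
\[\int_{-\infty}^{\infty}\widehat\varphi_l(t)\,Y^{(l)}_n(x,t)\,dt = \int_{-T}^{T}\widehat\varphi_l(t)\,Y^{(l)}_n(x,t)\,dt + \int_{|t|>T}\widehat\varphi_l(t)\,Y^{(l)}_n(x,t)\,dt.\]
On $[-T,T]$, dominated convergence with the $n$-independent dominant $2(\sigma^2+1)^{1/2}|t|\,|\widehat\varphi_l(t)|$ supplied by (\ref{fish})---locally integrable since $\widehat\varphi_l \in L^2(\R)$ and hence $\widehat\varphi_l\in L^1_{\text{loc}}(\R)$ by Cauchy--Schwartz on bounded intervals---yields convergence of the first piece to the analogous integral with $Y^{(l)}$ in place of $Y^{(l)}_n$. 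For the tail, combine (\ref{fish}) with Cauchy--Schwartz against the Sobolev weight $(1+|t|)^s$ to obtain
\[\left|\int_{|t|>T}\widehat\varphi_l(t)\,Y^{(l)}_n(x,t)\,dt\right| \leq 2(\sigma^2+1)^{1/2}\,\|\varphi_l\|_s\left(\int_{|t|>T}\frac{t^2}{(1+|t|)^{2s}}\,dt\right)^{1/2}.\]
Since $s>5/2$, the right-hand side is finite and tends to $0$ as $T\to \infty$, uniformly in $n$ and in $x$. Combining the two pieces and sending $T\to\infty$ gives convergence of the full integral along the chosen subsequence; inserting this into (\ref{eqn:deriv}) and identifying $Y^{(l)}$ later in the paper then delivers (\ref{eqn:Zprime}).

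The only technical point is the uniform-in-$n$ tail estimate, which is clean here because (\ref{fish}) has no $n$-dependence and the Sobolev regularity directly controls the weighted $L^1$-norm of $t\mapsto t\,\widehat\varphi_l(t)$ at infinity; the rest is standard Arzela--Ascoli plus dominated convergence bookkeeping. The substantive step---the actual identification of the compact-uniform limit $Y^{(l)}$ via the Stein--Tikhomirov machinery---is deferred to the subsequent subsections.
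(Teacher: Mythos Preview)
Your proof is correct and follows essentially the same approach as the paper's: split the integral at $|t|=T$, control the tail uniformly in $n$ via the bound (\ref{fish}) combined with a Cauchy--Schwartz estimate against the Sobolev weight $(1+|t|)^s$, and handle the compact piece by dominated convergence. The paper first establishes $\int_{\R}|t|\,|\widehat\varphi_l(t)|\,dt<\infty$ and then chooses $T$ to make the tail small, whereas you bound the tail directly, but the substance is the same.
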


\begin{proof}
Let $\delta > 0$. Recall that the regularity assumption on the test functions $\varphi_l$ are
\[ \int_\R (1+|h|)^{5+\epsilon}|\widehat{\varphi}_l(h)|^2 dh < \infty,\]
i.e. that $\varphi_l \in \mathcal{H}_s$, with $s = 5/2 + \epsilon$. Using the Cauchy-Schwartz inequality, it follows that 
\begin{eqnarray}
\int_\R (1+|h|)|\widehat{\varphi}_l(h)|dh \leq \sqrt{ \int_\R \frac{dh}{(1+|h|)^{3+\epsilon}}} \cdot \sqrt{\int_\R(1+|h|)^{5+\epsilon}|\widehat{\varphi}_l(h)|^2dh},\nonumber\\
\end{eqnarray}
which implies that
\begin{equation}
\label{Lib_Int}
\int_{\R} |h|\cdot |\widehat{\varphi}_l(h)|dh < \infty.
\end{equation}

A consequence of the finiteness of the integral in (\ref{Lib_Int}), for each $1 \leq l \leq d$, is that there exists a $T > 0$ so that
\begin{equation}
\label{arkansas}
2(\sigma^2+1)^{1/2}\sum^d_{l=1} |\alpha_l| \int_{|t|\geq T} |t| \cdot |\widehat{\varphi}_l(t)|dt < \delta.
\end{equation}

Using (\ref{eqn:deriv}), we can write

\begin{equation}
\label{california}
 Z'_{n}(x) = i \ \sum^{d}_{l = 1} \alpha_{l} 
  \int^T_{-T} \widehat{\varphi}_{l}(t) Y^{(l)}_{n}(x,t) dt + i \ \sum^{d}_{l = 1} \alpha_{l} 
  \int_{|t| \geq T} \widehat{\varphi}_{l}(t) Y^{(l)}_{n}(x,t) dt.
\end{equation}

Then (\ref{california}), (\ref{fish}), (\ref{arkansas}) imply that

\begin{eqnarray}
\label{conneticut}
\left| Z'_{n}(x) -   i \ \sum^{d}_{l = 1} \alpha_{l} 
 \int^T_{-T} \widehat{\varphi}_{l}(t) Y^{(l)}_{n}(x,t) dt   \right| &\leq& \sum^{d}_{l = 1} |\alpha_{l}| \int_{|t| \geq T} |\widehat{\varphi}_{l}(t)|\cdot |Y^{(l)}_{n}(x,t)| dt\nonumber\\
&\leq& 2(\sigma^2+1)^{1/2}\sum^d_{l=1} |\alpha_l| \int_{|t|\geq T} |t| \cdot |\widehat{\varphi}_l(t)|dt\nonumber\\
&<& \delta.
\end{eqnarray}

Notice that the estimate (\ref{conneticut}) is $n$-independent, so that in particular the estimate holds in the limit $n \to \infty$. Since $\delta$ was arbitrary, this completes the proof of the proposition.
\end{proof}

This completes the pre-compactness argument, which allows us to pass to the limit in (\ref{eqn:deriv}) and in (\ref{eqn:FTCapp}), and conclude that $Z_n(x)$ converges pointwise to the unique solution of equation (\ref{eqn:Z}) belonging to $C_b(\R)$, implying (\ref{eqn:fin}), and hence the conclusion of the theorem. Now we show the limiting behavior of the sequences $Y^{(l)}_n(x,t)$ imply (\ref{eqn:Zsub}) and (\ref{eqn:Zprime}). Consider the identity
\[e^{itM^{(l)}} = I + i \int^{t}_{0} M^{(l)} e^{ihM^{(l)}}dh.\]
Apply this identity, noting that $ M^{(l)}_{jk} = 0, \ \text{if} \ j,k \notin B_{l}$, to obtain that
\begin{eqnarray}
\label{eqn_un}
u^{(l)\circ}_n(t) &=& \Tr\{ P^{(l)}U^{(l)}(t)P^{(l)}\} - \E [\Tr\{ P^{(l)}U^{(l)}(t)P^{(l)}\} ]\nonumber\\
&=& i\int^t_0 \sum^n_{j,k=1}  \left[M^{(l)}_{jk}U^{(l)}_{jk}(t_1) - \E[M^{(l)}_{jk}U^{(l)}_{jk}(t_1)]\right].\nonumber\\
\end{eqnarray}
Recalling that $ Y^{(l)}_{n}(x,t) = \E \left[u^{(l)\circ}_{n}(t) e_n(x) \right]$, and applying the decoupling formula for Gaussian random variables, it follows from (\ref{eqn_un}) that
\begin{eqnarray}
Y^{(l)}_{n}(x,t)&=&i \int^{t}_{0} \sum^{n}_{j,k = 1} \E [ M^{(l)}_{jk}U^{(l)}_{jk}(t_{1})e^{\circ}_{n}(x)] dt_{1}\nonumber\\
&=&  \frac{2i}{n} \int^{t}_{0} \sum_{1 \leq j < k \leq n} \mathbf{1}_{\{j,k \in B_{l}\}} \  \E
\left[  D_{jk}U^{(l)}_{jk}(t_{1}) e^{\circ}_{n}(x)\right]dt_{1}.\nonumber\\
&& + \frac{i\sigma^{2}}{n} \int^{t}_{0} \sum^{n}_{j = 1} \mathbf{1}_{\{j \in B_{l}\}}\E\left[ D_{jj}U^{(l)}_{jj}(t_{1}) e^{\circ}_{n}(x) \right]dt_{1}.\nonumber\\
\label{newYn}
\end{eqnarray}

It will be useful to rewrite (\ref{newYn}) as 
\begin{eqnarray}
Y^{(l)}_{n}(x,t)&=& \underbrace{\frac{i}{n} \int^{t}_{0} \sum^{n}_{j, k = 1} \mathbf{1}_{\{j,k \in B_{l}\}}(1+\delta_{jk}) \E \left[  D_{jk}U^{(l)}_{jk}(t_{1}) e^{\circ}_{n}(x)\right]dt_{1}}_{=: T_{1}}\nonumber\\
&& +  \underbrace{\frac{i( \sigma^{2} - 2)}{n} \int^{t}_{0} \sum^{n}_{j = 1} \mathbf{1}_{\{j \in B_{l}\}}\E\left[ D_{jj}U^{(l)}_{jj}(t_{1}) e^{\circ}_{n}(x) \right]dt_{1}}_{=: T_{2}}.\nonumber\\
\end{eqnarray}

The reason for the rewrite is that it splits the functions $Y^{(l)}_n(x,t)$ into a part that depends on the distribution of the diagonal entries and a part that corresponds to the same term as for the Gaussian Orthogonal Ensemble, for which $\sigma^2 =2$. Recalling that $e_n(x) $ is given by (\ref{def_en}), again writing   $ \beta_{jk} = (1+\delta_{jk})^{-1}$ and using the identity
\[D_{jk} \Tr f(M)  = 2\beta_{jk} f'(M)_{jk},\]
it follows by a direct calculation that
\begin{equation}
\label{alaska}
D_{jk} e_{n}(x) = 2i\beta_{jk}xe_{n}(x)  \sum^{d}_{ r = 1} \alpha_{r} 
\left( P^{(r)}\varphi_{r}'(M^{(r)})P^{(r)}\right)_{jk}.
\end{equation}

Then for $ 1 \leq l \leq d$, using (\ref{alaska}) and (\ref{dif}), it follows that
\begin{eqnarray}
\label{alabama}
T_{1} &=&  \frac{-1}{n} \int^{t}_{0} \int^{t_{1}}_{0} \E \left[ \sum^{n}_{j,k = 1}  \mathbf{1}_{\{j,k \in B_{l}\}}
 U^{(l)}_{jj}(t_{2}) U^{(l)}_{kk}(t_{1} - t_{2})e^{\circ}_{n}(x) \right] dt_{2}dt_{1}\nonumber\\
&&-  \frac{1}{n} \int^{t}_{0} \int^{t_{1}}_{0} \E \left[ \sum^{n}_{j,k = 1}  \mathbf{1}_{\{j,k \in B_{l}\}}
 U^{(l)}_{jk}(t_{2}) U^{(l)}_{jk}(t_{1} - t_{2}) e^{\circ}_{n}(x)\right] dt_{2}dt_{1}\nonumber\\
&&- \frac{2x}{n} \int^{t}_{0} \E \left[ \sum^{n}_{j,k = 1} \mathbf{1}_{\{j,k \in B_{l}\}} U^{(l)}_{jk}(t_{1}) 
e_{n}(x)\sum^{d}_{ r = 1} \alpha_{r} \left(P^{(r)}\varphi_{r}'(M^{(r)})P^{(r)}\right)_{jk}\right] dt_{1},\nonumber\\
\end{eqnarray}
and also that
\begin{eqnarray}
\label{f_T2}
T_{2} &=&  \underbrace{\frac{-(\sigma^{2}-2)}{n} \int^{t}_{0} \int^{t_{1}}_{0} \E \left[ \sum^{n}_{j = 1} \mathbf{1}_{\{j \in B_{l}\}} U^{(l)}_{jj}(t_{2}) U^{(l)}_{jj}(t_{1} - t_{2}) e^{\circ}_{n}(x) \right] dt_{2}dt_{1}}_{=:T_{21}}\nonumber\\
&& \underbrace{- \frac{(\sigma^{2}-2)x}{n} \int^{t}_{0} \E \left[ \sum^{n}_{j = 1}\mathbf{1}_{\{j \in B_{l}\}} U^{(l)}_{jj}(t_{1})e_{n}(x)\sum^{d}_{ r = 1} \alpha_{r} \left(P^{(r)} \varphi_{r}'(M^{(r)})P^{(r)}\right)_{jj}\right] dt_{1}}_{=:T_{22}}.\nonumber\\
\end{eqnarray}
Using the semigroup property
\[U^{(l)}(t)U^{(l)}(h) = U^{(l)}(t+h),\]
it follows form (\ref{alabama}) that $T_1$ can be written
\begin{eqnarray}
\label{f_T1}
T_{1}&=& \underbrace{-\frac{1}{n} \int^{t}_{0} \int^{t_{1}}_{0} \E
 \left[ u^{(l)}_{n}(t_{1} - t_{2}) u^{(l)}_{n}( t_{2})e^{\circ}_{n}(x)\right] dt_{2}dt_{1}}_{=:T_{11}}\nonumber\\
&&\underbrace{-\frac{1}{n} \int^{t}_{0} t_{1} \E\left[  u^{(l)}_{n}(t_{1}) e^{\circ}_{n}(x)\right]dt_{1}}_{=:T_{12}}\nonumber\\
&&\underbrace{-\frac{2x}{n} \sum^{d}_{r = 1} \alpha_{r} \int^{t}_{0} \E
\left[\Tr\{P^{(l)}U^{(l)}(t_{1})P^{(l,r)}\varphi'_{r}(M^{(r)})P^{(r)}\} e_{n}(x) \right] dt_{1}}_{=:T_{13}}.
\end{eqnarray}
Define
\begin{equation}
\label{eqn_vnbar}
\bar{v}^{(l)}_{n}(t) := \frac{1}{n} \E[ u^{(l)}_{n}(t)]. 
\end{equation}
The following proposition presents the functions $Y^{(l)}_n(x,t)$ in a form that is amenable to asymptotic analysis.
\begin{prop}
\label{eqn_rewrite}
The equation  $ Y^{(l)}_{n}(x,t) = T_{1} + T_{2}$, can be written as
\begin{eqnarray}
Y^{(l)}_{n}(x,t) + 2\int^{t}_{0} \int^{t_{1}}_{0} \bar{v}^{(l)}_{n}(t_{1}-t_{2}) Y^{(l)}_{n}(x,t_{2})dt_{2}dt_{1}= x Z_{n}(x)\left[A^{(l)}_{n}(t) + Q^{(l)}_{n}(t) \right]  + r^{(l)}_{n}(x,t),\nonumber\\
\label{eqn:Integral1}
\end{eqnarray}
where
\begin{equation}
\label{eqn:limA}
A^{(l)}_{n}(t) :=  -2 \sum^{d}_{r= 1} \alpha_{r} \int^{t}_{0} \frac{1}{n}\E\left[\Tr\{P^{(l)}U^{(l)}(t_{1})P^{(l,r)}\varphi'_{r}(M^{(r)})P^{(r)}\}  \right] dt_{1},
\end{equation}
\begin{equation}
\label{eqn:limQ}
Q^{(l)}_{n}(t) :=  \frac{-(\sigma^{2}-2)}{n} \sum^{d}_{r= 1} \alpha_{r} \int^{t}_{0} \sum^{n}_{j=1} \mathbf{1}_{\{j \in B_{l} \cap B_r\}} \E\left[U^{(l)}_{jj}(t_{1}) \varphi'_{r}(M^{(r)})_{jj} \}\right] dt_{1},
\end{equation}
and
\begin{eqnarray}
\label{fishy}
r^{(l)}_{n}(x,t) &=&\nonumber\\
&&  \frac{-1}{n} \int^{t}_{0} t_{1} Y^{(l)}_{n}(x,t_{1})dt_{1}\label{fishy_sub1}\\
&& - \frac{1}{n} \int^{t}_{0} \int^{t_{1}}_{0} \E\left[u^{(l)\circ}_{n}(t_{1} - t_{2}) u^{(l)\circ}_{n}( t_{2}) e^{\circ}_{n}(x)\right] dt_{2}dt_{1}\label{fishy_sub2}\\
&&- \frac{2x}{n}   \sum^{d}_{r = 1} \alpha_{r} \int^{t}_{0} \E
\left[\Tr\{P^{(l)}U^{(l)}(t_{1})P^{(l,r)}\varphi'_{r}(M^{(r)})P^{(r)}\} e^{\circ}_{n}(x) \right] dt_{1}\label{fishy_sub3}\\
 && - \frac{(\sigma^{2}-2)}{n} \int^{t}_{0} \int^{t_{1}}_{0} \E \left[ \sum^{n}_{j = 1} \mathbf{1}_{\{j \in B_{l} \}}  U^{(l)}_{jj}(t_{2}) U^{(l)}_{jj}(t_{1} - t_{2})e^{\circ}_{n}(x) \right] dt_{2}dt_{1}\label{fishy_sub4}\\
&&  - \frac{x(\sigma^{2}-2)}{n}\sum^{d}_{r = 1} \alpha_{r} \int^{t}_{0} \sum^{n}_{j = 1} \mathbf{1}_{\{j \in B_{l} \cap B_r\}} \E\left[ U^{(l)}_{jj}(t_{1}) \varphi'_{r}(M^{(r)})_{jj}e^{\circ}_{n}(x)   \right] dt_{1}.\label{fishy_sub5}\\
\end{eqnarray}
\end{prop}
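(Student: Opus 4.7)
The plan is to start from the representation $Y^{(l)}_{n}(x,t) = T_{1}+T_{2}$, with $T_{1}=T_{11}+T_{12}+T_{13}$ as in (\ref{f_T1}) and $T_{2}=T_{21}+T_{22}$ as in (\ref{f_T2}), and reorganize each summand via a single elementary identity: for any random variable $X$, since $Z_{n}(x)=\E[e_{n}(x)]$ and $\E[e_{n}^{\circ}(x)]=0$,
\[\E[X e_{n}(x)] = \E[X]\cdot Z_{n}(x) + \E[X e_{n}^{\circ}(x)], \qquad \E[X e_{n}^{\circ}(x)] = \E[X^{\circ} e_{n}^{\circ}(x)].\]
Applied to the first-order summands $T_{13}$ and $T_{22}$, this identity produces the ``main'' pieces $xZ_{n}(x)A_{n}^{(l)}(t)$ and $xZ_{n}(x)Q_{n}^{(l)}(t)$ plus remainder contributions; applied twice to $T_{11}$ it yields the convolution against $\bar v_{n}^{(l)}$ on the left-hand side plus a cubic centered remainder. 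The summands $T_{12}$ and $T_{21}$ pass through untouched into $r_{n}^{(l)}$.

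The crux is $T_{11}$. Expanding each factor in $u^{(l)}_{n}(t_{1}-t_{2})u^{(l)}_{n}(t_{2})$ as its mean plus its centering, killing the constant-constant cross term via $\E[e_{n}^{\circ}(x)]=0$, and recognizing $\E[u^{(l)\circ}_{n}(s)e_{n}^{\circ}(x)] = Y^{(l)}_{n}(x,s)$, I would obtain
\begin{align*}
\E[u^{(l)}_{n}(t_{1}-t_{2})u^{(l)}_{n}(t_{2})e^{\circ}_{n}(x)] &= \E[u^{(l)\circ}_{n}(t_{1}-t_{2})u^{(l)\circ}_{n}(t_{2})e^{\circ}_{n}(x)] \\
&\quad + \E[u^{(l)}_{n}(t_{1}-t_{2})]\cdot Y^{(l)}_{n}(x,t_{2}) + \E[u^{(l)}_{n}(t_{2})]\cdot Y^{(l)}_{n}(x,t_{1}-t_{2}).
\end{align*}
Integrating against $-n^{-1}\,dt_{2}\,dt_{1}$, I would then substitute $s=t_{1}-t_{2}$ in the last term to show that the two linear pieces coincide, combining into
\[-\,2\int_{0}^{t}\int_{0}^{t_{1}}\bar v^{(l)}_{n}(t_{1}-t_{2})\,Y^{(l)}_{n}(x,t_{2})\,dt_{2}\,dt_{1},\]
which I transfer to the left-hand side of (\ref{eqn:Integral1}). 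The cubic remainder is exactly (\ref{fishy_sub2}).

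For the remaining summands the bookkeeping is routine. $T_{12}$ is already of the form $-n^{-1}\int_{0}^{t}t_{1}Y^{(l)}_{n}(x,t_{1})\,dt_{1}$, giving (\ref{fishy_sub1}), and $T_{21}$ is already in the form (\ref{fishy_sub4}). For $T_{13}$ and $T_{22}$ I apply the splitting identity with $X$ equal to the relevant trace or diagonal sum: the $\E[X]\cdot Z_{n}(x)$ piece, compared with (\ref{eqn:limA}) and (\ref{eqn:limQ}), produces exactly $xZ_{n}(x)A^{(l)}_{n}(t)$ and $xZ_{n}(x)Q^{(l)}_{n}(t)$; for $T_{22}$ the identification $\mathbf{1}_{\{j\in B_{l}\}}(P^{(r)}\varphi'_{r}(M^{(r)})P^{(r)})_{jj}=\mathbf{1}_{\{j\in B_{l}\cap B_{r}\}}\varphi'_{r}(M^{(r)})_{jj}$ delivers the correct indicator. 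The $\E[Xe_{n}^{\circ}(x)]$ pieces become (\ref{fishy_sub3}) and (\ref{fishy_sub5}) respectively.

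I do not anticipate a genuine obstacle here: the proposition is an algebraic regrouping of (\ref{f_T1})--(\ref{f_T2}) using iterated centering. The only step demanding attention is the change of variables that fuses the two linear cross-terms in $T_{11}$ into a single convolution with coefficient $2$; without this coalescence, the left-hand side of (\ref{eqn:Integral1}) would not close. Everything else is bookkeeping and consistent tracking of indicator functions and signs.
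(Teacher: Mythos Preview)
Your proposal is correct and follows essentially the same approach as the paper: both proofs expand $T_{11}$ by writing $u^{(l)}_{n}=u^{(l)\circ}_{n}+n\bar v^{(l)}_{n}$, use $\E[e^{\circ}_{n}(x)]=0$ to kill the deterministic cross-term, symmetrize the two linear pieces via the substitution $t_{2}\mapsto t_{1}-t_{2}$ to obtain the convolution with coefficient $2$, and split $T_{13}$ and $T_{22}$ via $e_{n}=e^{\circ}_{n}+Z_{n}$ to separate the main terms $xZ_{n}(x)A^{(l)}_{n}$, $xZ_{n}(x)Q^{(l)}_{n}$ from their centered remainders. Your explicit remark on the indicator identification $\mathbf{1}_{\{j\in B_{l}\}}(P^{(r)}\varphi'_{r}(M^{(r)})P^{(r)})_{jj}=\mathbf{1}_{\{j\in B_{l}\cap B_{r}\}}\varphi'_{r}(M^{(r)})_{jj}$ is a detail the paper uses tacitly.
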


\begin{proof}
Begin with the term $T_{11}$, defined in (\ref{f_T1}). Write
\begin{equation}
T_{11} = -\frac{1}{n} \int^{t}_{0} \int^{t_{1}}_{0} \E
 \left[\left(u^{(l)\circ}_{n}(t_{1} - t_{2}) + n\bar{v}_n(t_1-t_2)\right)\cdot \left(u^{(l)\circ}_{n}( t_{2}) +n\bar{v}_n(t_2)\right)e^{\circ}_{n}(x)\right] dt_{2}dt_{1},
\end{equation}
so that
\begin{eqnarray}
T_{11} &=&\nonumber\\
&& - \frac{1}{n} \int^{t}_{0} \int^{t_{1}}_{0} \E\left[u^{(l)\circ}_{n}(t_{1} - t_{2})u^{(l)\circ}_{n}( t_{2})e^{\circ}_{n}(x)\right] dt_{2}dt_{1}\nonumber\\
&& -\int^t_0 \int^{t_1}_0 \bar{v}_n(t_1-t_2)\E\left[u^{(l)\circ}_n(t_2)e^{\circ}_n(x)  \right]dt_2dt_1\nonumber\\
&& -\int^t_0 \int^{t_1}_0 \bar{v}_n(t_2)\E\left[u^{(l)\circ}_n(t_1-t_2)e^{\circ}_n(x)  \right]dt_2dt_1\nonumber\\
&& -n\int^t_0 \int^{t_1}_0 \bar{v}_n(t_1-t_2)\cdot \bar{v}_n(t_2)\underbrace{\E\left[e^{\circ}_n(x)  \right]}_{=0}dt_2dt_1.\nonumber\\
\end{eqnarray}
Noting that 
\[\E\left[u^{(l)\circ}_n(t_2)e^{\circ}_n(x)  \right] = Y^{(l)}_n(x,t_2), \ \E\left[ u^{(l)\circ}_n(t_1-t_2)e^{\circ}_n(x)  \right] = Y^{(l)}_n(x,t_1-t_2),\]
and also that
\[\int^t_0 \int^{t_1}_0 \bar{v}_n(t_2)Y^{(l)}_n(x, t_1-t_2)dt_2dt_1 = \int^t_0 \int^{t_1}_0 \bar{v}_n(t_1-t_2)Y^{(l)}_n(x, t_2)dt_2dt_1,\]
it follows that
\begin{eqnarray}
T_{11} &=&\nonumber\\
&& - \frac{1}{n} \int^{t}_{0} \int^{t_{1}}_{0} \E\left[u^{(l)\circ}_{n}(t_{1} - t_{2})u^{(l)\circ}_{n}( t_{2}) e^{\circ}_{n}(x)\right] dt_{2}dt_{1}\label{T11_sub1}\\
&& -2\int^t_0 \int^{t_1}_0 \bar{v}_n(t_1-t_2)Y^{(l)}_n(x,t_2)dt_2dt_1.\label{T11_sub2}\\
\end{eqnarray}

The term (\ref{T11_sub1}) goes into the remainder, which becomes (\ref{fishy_sub2}). Also, (\ref{T11_sub2}) is added to the left-hand side of (\ref{eqn:Integral1}). Now consider the term $T_{12}$, defined in (\ref{f_T1}). We have that
\begin{equation}
T_{12}  = -\frac{1}{n} \int^{t}_{0} t_{1} Y^{(l)}_n(x,t_1)dt_{1},
\end{equation}
which  becomes (\ref{fishy_sub1}) in the remainder. Consider the term $T_{13}$, also defined in (\ref{f_T1}). Writing 
\begin{equation}
T_{13} = -\frac{2x}{n} \sum^{d}_{r = 1} \alpha_{r} \int^{t}_{0} \E
\left[\Tr\{P^{(l)}U^{(l)}(t_{1})P^{(l,r)}\varphi'_{r}(M^{(r)})P^{(r)}\}\cdot \left(e^{\circ}_{n}(x)+Z_n(x)\right) \right] dt_{1},
\end{equation}
it follows, with $ A^{(l)}_n(t)$ given by (\ref{eqn:limA}), that
\begin{eqnarray}
T_{13} &=&\nonumber\\
&& - \frac{2x}{n}   \sum^{d}_{r = 1} \alpha_{r} \int^{t}_{0} \E
\left[\Tr\{P^{(l)}U^{(l)}(t_{1})P^{(l,r)}\varphi'_{r}(M^{(r)})P^{(r)}\}e^{\circ}_{n}(x) \right] dt_{1}\label{T13_sub1}\\
&& + \  xZ_n(x) A^{(l)}_n(t).\label{T13_sub2}\\
\end{eqnarray}
Then (\ref{T13_sub1}) becomes (\ref{fishy_sub3}) in the remainder, while (\ref{T13_sub2}) remains on the right-hand side of (\ref{eqn:Integral1}).
Now consider the term $T_{21}$, defined in (\ref{f_T2}).  This term becomes (\ref{fishy_sub4}) in the remainder. Finally, consider the term $T_{22}$, also defined in (\ref{f_T2}). Write 
\begin{equation}
T_{22} =- \frac{(\sigma^{2}-2)x}{n} \int^{t}_{0} \E \left[ \sum^{n}_{j = 1}\mathbf{1}_{\{j \in B_{l}\}} U^{(l)}_{jj}(t_{1})\cdot \left(e^{\circ}_{n}(x) + Z_n(x)\right) \sum^{d}_{ r = 1} \alpha_{r} \left(P^{(r)} \varphi_{r}'(M^{(r)})P^{(r)}\right)_{jj}\right] dt_{1},
\end{equation}
so that, with $Q^{(l)}_n(t)$ given by (\ref{eqn:limQ}) ,
\begin{eqnarray}
T_{22} &=&\nonumber\\
&&- \frac{(\sigma^{2}-2)x}{n} \int^{t}_{0} \E \left[ \sum^{n}_{j = 1}\mathbf{1}_{\{j \in B_{l}\}} U^{(l)}_{jj}(t_{1})e^{\circ}_{n}(x)\sum^{d}_{ r = 1} \alpha_{r} \left(P^{(r)} \varphi_{r}'(M^{(r)})P^{(r)}\right)_{jj}\right] dt_{1}\label{T22_sub1}\\
&&+ \ xZ_n(x)\cdot Q^{(l)}_{n}(t).\label{T22_sub2}\\
\end{eqnarray}
The term (\ref{T22_sub1}) becomes (\ref{fishy_sub5}) in the remainder. Also, the term (\ref{T22_sub2}) remains on the right-hand side of  (\ref{eqn:Integral1}). This completes the argument for proposition \ref{eqn_rewrite}.
\end{proof}
We now turn our attention to the remainder term, $r^{(l)}_n(x,t)$, of proposition \ref{eqn_rewrite}. The content of the following proposition is that the remainder is negligible in the limit.
\begin{prop}
\label{rem_goto0}
Each term of $r^{(l)}_n(x,t)$ converges to $0$ uniformly on compact subsets of $ \{ x \in \mathbb{R}, t \geq 0\}$, for $ 1 \leq l \leq d$. In other words, we have the uniform limit
\begin{equation}
\lim_{n \rightarrow \infty}r^{(l)}_{n}(x,t) = 0.
\end{equation}
\end{prop}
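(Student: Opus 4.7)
The plan is to bound each of the five summands (\ref{fishy_sub1})--(\ref{fishy_sub5}) of $r^{(l)}_n(x,t)$ separately and show that each tends to $0$ as $n\to\infty$, uniformly for $(x,t)$ in a compact subset of $\R\times[0,\infty)$. Every summand carries an explicit factor of $1/n$, so it suffices to bound the remaining expectation by something polynomial in $(x,t)$ and at most of order $\sqrt{n}$ in $n$. The tools I would use are: (i) the a priori bound $|Y^{(l)}_n(x,t)|\leq 2(\sigma^{2}+1)^{1/2}|t|$ from (\ref{fish}); (ii) $|e^{\circ}_n(x)|\leq 2$ and $\mathbf{Var}(e_n(x))\leq 1$; (iii) the variance estimate (\ref{eqn:u}); (iv) the Poincar\'e inequality (\ref{ineq-Poin}) combined with the differentiation rule (\ref{dif}) and the unitary bounds (\ref{Uprop}); and, for the terms containing $\varphi'_r$, (v) the Fourier inversion identity $\varphi'_r(M^{(r)})=\int_{\R} ih\,\widehat{\varphi}_r(h)\,e^{ihM^{(r)}}\,dh$, whose absolute convergence is guaranteed by (\ref{Lib_Int}).

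The first two summands are immediate. Inserting (\ref{fish}) into (\ref{fishy_sub1}) gives the bound $\tfrac{2(\sigma^{2}+1)^{1/2}}{n}\int_0^t t_1^{2}\,dt_1=O(t^{3}/n)$. For (\ref{fishy_sub2}), I pull out $|e^{\circ}_n(x)|\leq 2$, apply the Cauchy--Schwartz inequality to the product $u^{(l)\circ}_n(t_1-t_2)u^{(l)\circ}_n(t_2)$, and invoke (\ref{eqn:u}); the pointwise integrand is bounded by $8(\sigma^{2}+1)(t_1-t_2)t_2$, which integrates to $O(t^{4}/n)$.

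The three remaining summands follow a common template. Writing each expectation as $\E[X e^{\circ}_n(x)]=\E[X^{\circ}e^{\circ}_n(x)]$ and using $\mathbf{Var}(e_n(x))\leq 1$, Cauchy--Schwartz gives $|\E[Xe^{\circ}_n(x)]|\leq \sqrt{\mathbf{Var}(X)}$, reducing each term to a variance estimate for a specific scalar functional of $W$. The Poincar\'e inequality controls this variance by $\tfrac{C}{n}\E\sum_{j\leq k}|D_{jk}X|^{2}$, and (\ref{dif})--(\ref{Uprop}) combined with the convolution bound $|U^{(l)}_{aj}*U^{(l)}_{bk}(t)|\leq t$ let me bound each gradient pointwise by an explicit polynomial in $t$. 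For (\ref{fishy_sub4}), taking $X=\sum_{j\in B_l}U^{(l)}_{jj}(t_2)U^{(l)}_{jj}(t_1-t_2)$, a direct Poincar\'e calculation yields $\mathbf{Var}(X)=O(n\,t_1^{4})$, so the whole summand is $O(n^{-1/2})$. For (\ref{fishy_sub3}) and (\ref{fishy_sub5}), where $X$ contains $\varphi'_r(M^{(r)})$, I use (v) to express $D_{jk}\varphi'_r(M^{(r)})$ again through (\ref{dif}); the resulting $h$-integrals converge absolutely by (\ref{Lib_Int}). This produces variance bounds of order $\|\varphi_r\|_{s}^{2}$ (independent of $n$) for the mixed trace in (\ref{fishy_sub3}) and $n\|\varphi_r\|_{s}^{2}$ for the diagonal sum in (\ref{fishy_sub5}), so both are $o(1)$ after the $1/n$ prefactor.

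The main technical obstacle is the variance estimate for the \emph{mixed} trace $\Tr\{P^{(l)}U^{(l)}(t_1)P^{(l,r)}\varphi'_r(M^{(r)})P^{(r)}\}$ in (\ref{fishy_sub3}): the Fr\'echet derivative $D_{jk}$ picks up contributions from \emph{both} $U^{(l)}(t_1)$ and $\varphi'_r(M^{(r)})$, so one has to differentiate through the Fourier representation (v) and then carefully track the projectors $P^{(l)}$, $P^{(r)}$, $P^{(l,r)}$ when summing the squared gradients over $(j,k)$ in order to avoid an artificial factor of $n$. Once this variance bound is secured, every resulting estimate is polynomial in $t$ and at most linear in $x$, so on compact subsets of $\R\times[0,\infty)$ the common $1/n$ prefactor yields the required uniform convergence $r^{(l)}_n(x,t)\to 0$.
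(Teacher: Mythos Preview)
Your treatment of (\ref{fishy_sub1}), (\ref{fishy_sub2}), (\ref{fishy_sub4}), and (\ref{fishy_sub5}) matches the paper's proof essentially step for step: the paper also uses (\ref{fish}) for the first term, $|e_n^\circ|\leq 2$ together with Cauchy--Schwarz and (\ref{eqn:u}) for the second, a Poincar\'e computation on $U^{(l)}_{jj}(t_2)U^{(l)}_{jj}(t_1-t_2)$ for the fourth (the paper bounds the variance of each summand by $O(1/n)$ rather than the variance of the full sum by $O(n)$, but these are equivalent via Cauchy--Schwarz on the covariances), and reduces the fifth to the fourth via the Fourier representation (\ref{libby5}).

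The genuine difference is in (\ref{fishy_sub3}). You propose a direct Poincar\'e variance bound on the mixed trace $\Tr\{P^{(l)}U^{(l)}(t_1)P^{(l,r)}\varphi'_r(M^{(r)})P^{(r)}\}$, acknowledging the need to track the projectors so that the gradient sum $\sum_{p,q}|D_{pq}X|^2$ stays $O(n)$ rather than $O(n^2)$; this is indeed achievable (after writing the contribution from $D_{pq}U^{(l)}$ as the $(q,p)$ entry of a product of unitaries and projections and using that its Hilbert--Schmidt norm is $\leq n_{lr}$), and yields $\mathbf{Var}(X)=O(1)$, hence an $O(1/n)$ bound on the whole term. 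The paper instead invokes Lemma~\ref{lem:diag}: it uses the bilinear-form machinery (free probability plus the Chebyshev diagonalization) to show that $n^{-1}\Tr\{P^{(l)}U^{(l)}(t_1)P^{(l,r)}\varphi'_r(M^{(r)})P^{(r)}\}$ converges both in mean and almost surely to the same deterministic limit, and then concludes $L^1$ convergence of the centered quantity to $0$ by dominated convergence. Your route is more elementary, self-contained, and gives an explicit rate $O(1/n)$; the paper's route recycles heavy machinery that is needed anyway to compute the limiting covariance, but only yields $o(1)$ for this particular term.
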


\begin{proof}
Begin with the term (\ref{fishy_sub1}). Applying the estimate (\ref{fish}), we obtain
\begin{eqnarray}
\label{Lib1}
\left|\frac{1}{n} \int^{t}_{0} t_{1} Y^{(l)}_{n}(x,t_{1})dt_{1}\right| & \leq & \frac{1}{n}t^2\left|Y^{(l)}_n(x,t)  \right|\nonumber\\
&\leq & \frac{2(\sigma^2+1)^{1/2}}{n}|t|^3\nonumber\\ 
&=& O\left(\frac{1}{n}\right).\nonumber\\
\end{eqnarray}
Now consider the term (\ref{fishy_sub2}). Using the bound $|e^{\circ}_n(x)| \leq 2$, the Cauchy-Schwartz inequality, and (\ref{eqn:u}) twice, it follows that
\begin{eqnarray}
\label{Lib2}
\left|\frac{1}{n} \int^{t}_{0} \int^{t_{1}}_{0} \E\left[ u^{(l)\circ}_{n}(t_{1} - t_{2}) u^{(l)\circ}_{n}( t_{2}) e^{\circ}_{n}(x)\right] dt_{2}dt_{1}\right| &\leq& \frac{2}{n}t^2 \mathbf{Var}^{1/2}\{u^{(l)}_n(t)\}\mathbf{Var}^{1/2}\{u^{(l)}_n(t)\}\nonumber\\
&\leq& \frac{8(\sigma^2+1)^{1/2}}{n}t^4\nonumber\\
&=& O\left( \frac{1}{n}  \right).\nonumber\\
\end{eqnarray}
Consider the term (\ref{fishy_sub3}) next.  Applying (\ref{wet2}) of lemma \ref{lem:diag} to the exponential function and $\varphi'_r$, and noting that $\varphi'_r \in \mathcal{H}_{\frac{3}{2}+\epsilon}$, it follows that
\begin{eqnarray}
\lefteqn{ \lim_{n \rightarrow \infty} \frac{1}{n}\E \left[ \Tr\left\{ P^{(l)} U^{(l)}(t_1)P^{(l,r)} \varphi'_r(M^{(r)})P^{(r)} \right\}\right]}\nonumber\\
&& =  \frac{1}{4\pi^{2}\gamma_{l}\gamma_{r}} \int^{2\sqrt{\gamma_{l}}}_{-2\sqrt{\gamma_{l}}} \int^{2\sqrt{\gamma_{r}}}_{-2\sqrt{\gamma_{r}}} e^{it_1x}\varphi'_r(y)\left[ \sum^{\infty}_{k=0}  U^{\gamma_{l}}_{k}(x)U^{\gamma_{r}}_{k}(y)\frac{\gamma^{k+1}_{lr}}{\gamma^{k/2}_{l}\gamma^{k/2}_{r}} \right] \sqrt{4\gamma_{l}-x^{2}}\sqrt{4\gamma_{r}-y^{2}}dydx.\nonumber\\
\label{hud}
\end{eqnarray}
While the exponential function does not belong to $ \mathcal{H}_{\frac{3}{2}+\epsilon}$, we can truncate the exponential function in a smooth fashion outside the support of the semicircle law, so that the truncated exponential function belongs to $\mathcal{H}_{\frac{3}{2}+\epsilon}$. We may replace the exponential function by its truncated version because the eigenvalues of the submatrices concentrate in the support of the semicircle law with overwhelming probability. Then
\begin{eqnarray}
\lefteqn{ \lim_{n \rightarrow \infty} \frac{1}{n}  \Tr\left\{ P^{(l)} U^{(l)}(t_1)P^{(l,r)} \varphi'_r(M^{(r)})P^{(r)} \right\}}\nonumber\\
&& =  \frac{1}{4\pi^{2}\gamma_{l}\gamma_{r}} \int^{2\sqrt{\gamma_{l}}}_{-2\sqrt{\gamma_{l}}} \int^{2\sqrt{\gamma_{r}}}_{-2\sqrt{\gamma_{r}}} e^{it_1x}\varphi'_r(y)\left[ \sum^{\infty}_{k=0}  U^{\gamma_{l}}_{k}(x)U^{\gamma_{r}}_{k}(y)\frac{\gamma^{k+1}_{lr}}{\gamma^{k/2}_{l}\gamma^{k/2}_{r}} \right] \sqrt{4\gamma_{l}-x^{2}}\sqrt{4\gamma_{r}-y^{2}}dydx.\nonumber\\
\label{qrr}
\end{eqnarray}

Here it is not so important to know the exact value of the limit, but we will use the fact that we have convergence in the mean and almost surely to the same limit. Note the convergence in (\ref{hud}) implies that the sequence of numbers
\[ \frac{1}{n}\E \left[ \Tr\left\{ P^{(l)} U^{(l)}(t_1)P^{(l,r)} \varphi'_r(M^{(r)})P^{(r)} \right\}\right],\]
is bounded. Also the convergence in (\ref{qrr}) implies that the random variables 
\[ \frac{1}{n}\Tr\left\{ P^{(l)} U^{(l)}(t_1)P^{(l,r)} \varphi'_r(M^{(r)})P^{(r)} \right\},\]
are  bounded with probability $1$. Using (\ref{hud}) and (\ref{qrr}) with the dominated convergence theorem, it now follows that
\begin{equation}
\lim_{n \to \infty}  \E \left| \frac{1}{n} \Tr P^{(l)}U^{(l)}(t_{1})P^{(l,r)}\varphi'_{r}(M^{(r)})P^{(r)} - \frac{1}{n}\E\left\{  \Tr P^{(l)}U^{(l)}(t_{1})P^{(l,r)}\varphi'_{r}(M^{(r)})P^{(r)}  \right\} \right|  = 0.
\label{wet}
\end{equation}
Combining the bound $|e_n(x)| \leq 1$ with (\ref{wet}), it follows that
\begin{eqnarray}
\label{fish2}
&& \left|\frac{1}{n} \E\left[\Tr\{P^{(l)}U^{(l)}(t_{1})P^{(l,r)}\varphi'_{r}(M^{(r)})P^{(r)}\}e^{\circ}_{n}(x) \right]\right|  \nonumber\\
&=& \left| \E \left[ \left(\frac{1}{n} \Tr P^{(l)}U^{(l)}(t_{1})P^{(l,r)}\varphi'_{r}(M^{(r)})P^{(r)} - \frac{1}{n}\E\left\{  \Tr P^{(l)}U^{(l)}(t_{1})P^{(l,r)}\varphi'_{r}(M^{(r)})P^{(r)}  \right\} \right)e_{n}(x) \right]\right|\nonumber\\
&\leq& \E \left| \frac{1}{n} \Tr P^{(l)}U^{(l)}(t_{1})P^{(l,r)}\varphi'_{r}(M^{(r)})P^{(r)} - \frac{1}{n}\E\left\{  \Tr P^{(l)}U^{(l)}(t_{1})P^{(l,r)}\varphi'_{r}(M^{(r)})P^{(r)}  \right\} \right| \to 0.\nonumber\\
\end{eqnarray}

Then, using (\ref{fish2}) in the remainder term (\ref{fishy_sub3}), it follows that
\begin{eqnarray}
\label{Lib3}
&& \left|- \frac{2x}{n}   \sum^{d}_{r = 1} \alpha_{r} \int^{t}_{0} \E
\left[\Tr\{P^{(l)}U^{(l)}(t_{1})P^{(l,r)}\varphi'_{r}(M^{(r)})P^{(r)}\} e^{\circ}_{n}(x) \right] dt_{1}\right|\to 0 \ \text{as} \ n \to \infty.\nonumber\\
\end{eqnarray}

Consider (\ref{fishy_sub4}), which is the next term in the remainder. Observe that, again using the Cauchy-Schwartz inequality and the fact that $|e_n(x)|\leq 1$,
\begin{eqnarray}
\label{libby3}
 \lefteqn{\E \left[ \frac{1}{n} \sum^{n}_{j = 1} \mathbf{1}_{\{j \in B_{l} \}}  U^{(l)}_{jj}(t_{2}) U^{(l)}_{jj}(t_{1} - t_{2}) e^{\circ}_{n}(x) \right]}\nonumber\\
 &=&  \E \left[ \frac{1}{n}\sum^{n}_{j \in B_{l}}  U^{(l)}_{jj}(t_{2}) U^{(l)}_{jj}(t_{1} - t_{2}) e^{\circ}_{n}(x) \right]\nonumber\\ 
 &\leq& \E\left| \frac{1}{n}\sum_{j \in B_l} U^{(l)}_{jj}(t_2)U^{(l)}_{jj}(t_1-t_2) -\frac{1}{n}\E\left\{ \sum_{j \in B_l} U^{(l)}_{jj}(t_2)U^{(l)}_{jj}(t_1-t_2)\right\}  \right|\nonumber\\
 &\leq&  \mathbf{Var}^{1/2}\left\{ \frac{1}{n} \sum_{j \in B_l}  U^{(l)}_{jj}(t_2)U^{(l)}_{jj}(t_1-t_2)   \right\}.\nonumber\\
\end{eqnarray}

For fixed $j, p, q \in B_l$, using (\ref{dif}), 
\begin{eqnarray}
\label{deriv_need1}
D_{pq}U^{(l)}_{jj}(t) &=& i\beta_{pq}\left[ U^{(l)}_{jp}*U^{(l)}_{jq}(t) +  U^{(l)}_{jp}*U^{(l)}_{jq}(t) \right]\nonumber\\
&=&  2i\beta_{pq}\int^t_0 U^{(l)}_{jp}(t-h)U^{(l)}_{jq}(h)dh.\nonumber\\
\end{eqnarray}
Using (\ref{deriv_need1}), recalling that $\beta_{pq} = (1+\delta_{pq})^{-1} \leq 1$, and the Cauchy-Schwartz inequality, it follows that
\begin{equation}
\label{libby}
\left|D_{pq} U^{(l)}_{jj}(t)\right|^2 \leq 4|t| \int^t_0 | U^{(l)}_{jp}(t-h)U^{(l)}_{jq}(h)|^2dh.
\end{equation}

Using (\ref{libby}), the fact that $|U^{(l)}_{jk}(t)| \leq 1$, and the inequality $2ab \leq a^2 + b^2$, it follows that 
\begin{eqnarray}
\label{libby2}
&&\lefteqn{\left|D_{pq}\{ U^{(l)}_{jj}(t_2) U^{(l)}_{jj}(t_1-t_2) \}\right|^2}\nonumber\\
&\leq& 2|D_{pq} U^{(l)}_{jj}(t_2)|^2 + 2|D_{pq} U^{(l)}_{jj}(t_1-t_2)|^2\nonumber\\
&\leq & 8|t| \left( \int^{t_2}_0 | U^{(l)}_{jp}(t_2-h)U^{(l)}_{jq}(h)|^2dh + \int^{t_1-t_2}_0 | U^{(l)}_{jp}(t_1-t_2-h)U^{(l)}_{jq}(h)|^2dh  \right).\nonumber\\ 
\end{eqnarray}
Using the Poincar\'{e} inequality,  (\ref{libby2}), adding more nonnegative terms, and using the property of the unitary matrices that 
\begin{equation}
\label{eqn_unitrow1}
\sum_{k=1}^n|U^{(l)}_{jk}(t)|^2=1,
\end{equation}
it follows that 
\begin{eqnarray}
\label{libby4}
\lefteqn{\mathbf{Var}\left\{U^{(l)}_{jj}(t_2)U^{(l)}_{jj}(t_1-t_2)\right\}}\nonumber\\
 &\leq& \sum_{\begin{array}{cc}
            p\leq q \\ p,q \in B_l 
            \end{array} } \E\left[ (M^{(l)}_{pq})^2  \right]\E \left[ \left| D_{pq}\{U^{(l)}_{jj}(t_2)U^{(l)}_{jj}(t_1-t_2)\}   \right|^2 \right]\nonumber\\ 
&\leq&\frac{8(\sigma^2+1)|t|}{n}  \sum^n_{p=1}\sum^n_{q=1} \E  \left[\int^{t_2}_0 \left| U^{(l)}_{jp}(t_2-h)U^{(l)}_{jq}(h) \right|^2dh + \int^{t_1-t_2}_0 \left| U^{(l)}_{jp}(t_1-t_2-h)U^{(l)}_{jq}(h) \right|^2dh \right]\nonumber\\ 
&\leq& \frac{8(\sigma^2+1)|t|}{n}  \sum^n_{p=1}  \E \left[\int^{t_2}_0 \left| U^{(l)}_{jp}(t_2-h) \right|^2dh + \int^{t_1-t_2}_0 \left| U^{(l)}_{jp}(t_1-t_2-h) \right|^2dh \right]\nonumber\\
&\leq& \frac{16(\sigma^2+1)|t|}{n} t_1\nonumber\\ 
&=& O\left(\frac{1}{n}\right).    
\end{eqnarray}

Now, combining  (\ref{libby3}) with (\ref{libby4}), we have that 
\begin{equation}
\label{libby6}
 \E \left[\frac{1}{n} \sum^{n}_{j = 1} \mathbf{1}_{\{j \in B_{l} \}}  U^{(l)}_{jj}(t_{2}) U^{(l)}_{jj}(t_{1} - t_{2}) e^{\circ}_{n}(x) \right] = O\left(  \frac{1}{n}\right),
\end{equation}
and it follows that
\begin{equation}
\label{Lib4}
\left| - \frac{(\sigma^{2}-2)}{n} \int^{t}_{0} \int^{t_{1}}_{0} \E \left[ \sum^{n}_{j = 1} \mathbf{1}_{\{j \in B_{l} \}}  U^{(l)}_{jj}(t_{2}) U^{(l)}_{jj}(t_{1} - t_{2}) e^{\circ}_{n}(x) \right] dt_{2}dt_{1}  \right| = O\left(\frac{1}{n}\right).
\end{equation}

Now consider the final term of the remainder, given by (\ref{fishy_sub5}). We apply the identity below
\begin{equation}
\label{libby5}
\varphi'_{r}(M^{(r)}) = i \int^{\infty}_{-\infty} h\widehat{\varphi}_{r}(h)U^{(r)}(h)dh,
\end{equation}
which is a consequence of the matrix version of the Fourier inversion formula (\ref{eqn:invfour}). Using (\ref{libby5}), the finiteness of the integral (\ref{Lib_Int}), the above estimate (\ref{libby6}), and the dominated convergence theorem, we have that
\begin{eqnarray}
\label{Lib5}
&& \left| - \frac{x(\sigma^{2}-2)}{n}\sum^{d}_{r = 1} \alpha_{r} \int^{t}_{0} \sum^{n}_{j = 1} \mathbf{1}_{\{j \in B_{l} \cap B_r\}} \E\left[ U^{(l)}_{jj}(t_{1}) (\varphi'_{r}(M^{(r)}))_{jj} e^{\circ}_{n}(x)   \right] dt_{1}   \right|\nonumber\\
&\leq& \sum^d_{r=1}|x(\sigma^2-2)\alpha_r| \left| \int^t_0 \int^{\infty}_{-\infty}h\widehat{\varphi}_r(h)  \frac{1}{n}\sum^n_{j \in B_l \cap B_r}\E\left[ U^{(l)}_{jj}(t_{1}) U^{(r)}_{jj}(h)_{jj} e^{\circ}_{n}(x)   \right] dhdt_{1}\right| \to 0. \nonumber\\
\end{eqnarray}

Combining (\ref{Lib1}), (\ref{Lib2}), (\ref{Lib3}), (\ref{Lib4}), (\ref{Lib5}), and comparing to the remainder term (\ref{fishy}), the proposition is proved.
\end{proof}

The goal now is to pass to the limit in (\ref{eqn:Integral1}).  In what follows let $\{U^{\gamma}_{k}(x)\}$ denote the (rescaled) Chebyshev polynomials of the second kind on $ [-2\sqrt{\gamma}, \ 2\sqrt{\gamma}]$,
\begin{equation}
U^{\gamma}_{k}(x) = \sum^{\lfloor k/2 \rfloor}_{j=0} (-1)^{j} \binom{k-j}{j}\left(\frac{x}{2\sqrt{\gamma}}\right)^{k-2j}.
\end{equation}
\begin{prop}
\label{prop:limit}
Let $A^{(l)}_{n}(t)$ be given by (\ref{eqn:limA}), $Q^{(l)}_n(t)$ given by (\ref{eqn:limQ}), and $\bar{v}_n(t)$ given by (\ref{eqn_vnbar}) . Then the limits of $A^{(l)}_{n}(t), Q^{(l)}_n(t) $ and $\bar{v}_n(t)$ as $n \rightarrow \infty$ exist and
\begin{eqnarray}
\label{eqn_A}
A^{(l)}(t) &:=& \lim_{n \to \infty}A^{(l)}_{n}(t)\nonumber\\
&=& -\frac{1}{2\pi^{2} \gamma_{l}} \sum^{d}_{r = 1}\frac{\alpha_{r}}{\gamma_{r}}\int^{t}_{0}\int^{2\sqrt{\gamma_{l}}}_{-2\sqrt{\gamma_{l}}}\int^{2\sqrt{\gamma_{r}}}_{-2\sqrt{\gamma_{r}}} e^{it_{1}x}\varphi'_{r}(y) \sqrt{4\gamma_{l}-x^{2}}\sqrt{4\gamma_{r}-y^{2}}F_{lr}(x,y)dydxdt_{1},\nonumber\\
\end{eqnarray}
where 
\begin{equation}
\label{eqn:kernel}
F_{lr}(x,y) = \sum^{\infty}_{k=0} U^{\gamma_{l}}_{k}(x)U^{\gamma_{r}}_{k}(y)\frac{\gamma^{k+1}_{lr}}{\gamma^{k/2}_{l}\gamma^{k/2}_{r}},
\end{equation}
the limit of $Q^{(l)}_n(t)$ is given by
\begin{eqnarray}
\label{eqn:asympQ}
Q^{(l)}(t) &:=& \lim_{n \to \infty} Q^{(l)}_{n}(t)\nonumber\\
&=& -\frac{(\sigma^{2} - 2)}{4\pi^{2} \gamma_{l}} \sum^{d}_{r = 1} \frac{\gamma_{lr}\alpha_{r}}{\gamma_{r}} \int^{t}_{0}\int^{2\sqrt{\gamma_{l}}}_{-2\sqrt{\gamma_{l}}} e^{it_{1}\lambda}\sqrt{4\gamma_{l} - \lambda^{2}}d\lambda dt_{1} \cdot \int^{2\sqrt{\gamma_{r}}}_{-2\sqrt{\gamma_{r}}} \varphi'_{r}(\mu) \sqrt{4\gamma_{r} - \mu^{2}} d\mu,\nonumber\\
\end{eqnarray}
and the limit of $\bar{v}_n(t)$, after rescaling by $\gamma_l$, is given by
\begin{equation}
v^{(l)}(t) := \frac{1}{\gamma_l} \lim_{n \to \infty} \bar{v}_n(t)= \frac{1}{2\pi \gamma_{l} }\int^{2\sqrt{\gamma_{l}}}_{-2\sqrt{\gamma_{l}}}e^{itx}\sqrt{4\gamma_{l}-x^{2}}dx.
\label{limitforv}
\end{equation}
\end{prop}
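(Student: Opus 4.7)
The plan is to handle the three limits in order of increasing difficulty: $\bar v_n(t)$ follows from the semicircle law applied to the submatrix $M(B_l)$; $A^{(l)}_n(t)$ is a direct application of the diagonalization Lemma \ref{lem:diag}; and $Q^{(l)}_n(t)$ requires combining a Fourier reduction with Poincar\'e-type variance estimates to establish an asymptotic factorization of diagonal entries.

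For $v^{(l)}$, the block structure of $M^{(l)}=P^{(l)}MP^{(l)}$ gives $\Tr\{P^{(l)}U^{(l)}(t)P^{(l)}\}=\Tr e^{itM(B_l)}$, and since $M(B_l)=\sqrt{n_l/n}\cdot(n_l^{-1/2}W(B_l))$ is a rescaled standard Wigner matrix of size $n_l$, its empirical spectral distribution converges in expectation to the semicircle law on $[-2\sqrt{\gamma_l},2\sqrt{\gamma_l}]$. Since $|e^{itx}|\leq 1$, bounded convergence yields $\bar v_n(t)=(n_l/n)\E\int e^{itx}d\mu_n(x)\to (1/2\pi)\int_{-2\sqrt{\gamma_l}}^{2\sqrt{\gamma_l}} e^{itx}\sqrt{4\gamma_l-x^2}\,dx$, and dividing by $\gamma_l$ gives \eqref{limitforv}. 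For $A^{(l)}_n(t)$, the integrand is precisely the bilinear form $\frac{1}{n}\langle e^{it_1\cdot},\varphi_r'\rangle_{lr}$ from Definition \ref{def:bilin}; since $\varphi_r\in\mathcal H_s$ with $s>5/2$ implies $\varphi_r'\in\mathcal H_{s-1}$ with $s-1>3/2$, the same smooth-truncation argument used to produce \eqref{qrr} lets me apply \eqref{wet2} pointwise in $t_1$. The uniform bound $|e^{it_1 x}|\leq 1$ together with an estimate of the form \eqref{fish} justifies dominated convergence under the $t_1$-integral, producing \eqref{eqn_A} with the kernel $F_{lr}$ given by \eqref{eqn:kernel}.

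The main work lies in $Q^{(l)}_n(t)$, which features individual diagonal entries $U^{(l)}_{jj}(t_1)\varphi_r'(M^{(r)})_{jj}$ rather than a normalized trace. My plan is to insert the matrix Fourier representation $\varphi_r'(M^{(r)})_{jj}=i\int h\widehat{\varphi}_r(h)U^{(r)}_{jj}(h)\,dh$, exchange the orders of integration by Fubini, and reduce to proving the asymptotic factorization
\[
\frac{1}{n}\sum_{j\in B_l\cap B_r}\E\bigl[U^{(l)}_{jj}(t_1)U^{(r)}_{jj}(h)\bigr]\longrightarrow \gamma_{lr}\,v^{(l)}(t_1)\,v^{(r)}(h).
\]
Permutation symmetry within $B_l$ forces $\E[U^{(l)}_{jj}(t_1)]$ to be independent of $j\in B_l$ and equal to $(n/n_l)\bar v_n(t_1)\to v^{(l)}(t_1)$, with the analogous statement for the $r$-factor; the remaining covariance sum is controlled by a Poincar\'e computation entirely parallel to \eqref{libby4}, giving $\mathbf{Var}(U^{(l)}_{jj}(t_1))=O(t_1^2/n)$ and $\mathbf{Var}(U^{(r)}_{jj}(h))=O(h^2/n)$, so Cauchy-Schwartz bounds each covariance by $O(|t_1 h|/n)$. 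Plugging the factorization back in, Fubini together with the identity $\int ih\widehat{\varphi}_r(h)\int e^{ihy}\sqrt{4\gamma_r-y^2}\,dy\,dh=\int\varphi_r'(y)\sqrt{4\gamma_r-y^2}\,dy$ reassembles \eqref{eqn:asympQ}.

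The chief technical obstacle is justifying the interchange of the $h$-integral and the $n\to\infty$ limit in the factorization step, because the variance bound degrades as $|h|^2$. This is overcome by Sobolev regularity: the assumption $\varphi_r\in\mathcal H_s$ with $s>5/2$ together with Cauchy-Schwartz (exactly as in \eqref{Lib_Int}, but with an extra power of $h$) yields $\int|h|^2|\widehat{\varphi}_r(h)|\,dh<\infty$, so the covariance contribution to the $h$-integral is $O(|t_1|/n)\to 0$, while the mean-product part passes to the limit by dominated convergence against the envelope $|h\widehat{\varphi}_r(h)|\in L^1$ guaranteed by \eqref{Lib_Int}.
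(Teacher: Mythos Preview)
Your proposal is correct and follows the paper's approach closely for $\bar v_n$ and $A^{(l)}_n$. For $Q^{(l)}_n$ the overall architecture is also the same as the paper's---a Poincar\'e variance bound on diagonal entries, Cauchy--Schwartz to kill the covariance, and factorization into semicircle integrals---but there is one genuine and useful difference worth flagging. The paper identifies the limit of $\E[U^{(l)}_{jj}(t_1)]$ (and of $\E[\varphi_r'(M^{(r)})_{jj}]$) by invoking an external result, Proposition~2.1 of \cite{PRS}, which requires $C^7_c$ test functions and therefore forces a smooth truncation plus a mollifier density argument (\ref{smooth_approx}). Your permutation-symmetry observation---that $\E[U^{(l)}_{jj}(t_1)]$ is constant over $j\in B_l$ and hence equals $(n/n_l)\bar v_n(t_1)\to v^{(l)}(t_1)$---bypasses that machinery entirely and keeps the argument self-contained. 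This is cleaner; the only cost is that you must carry the Fourier variable $h$ through the factorization and justify the $h$-integral/limit interchange, which you do correctly via $\int|h|^2|\widehat\varphi_r(h)|\,dh<\infty$. Two minor citation slips: the variance bound you need is the single-factor estimate (\ref{golo}) rather than the product estimate (\ref{libby4}), and the dominated-convergence envelope for the $t_1$-integral in $A^{(l)}_n$ is simply $\|\varphi_r'\|_\infty$ (Sobolev embedding) rather than anything like (\ref{fish}); neither affects the validity of the argument.
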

\begin{proof}
Recall that $  A^{(l)}_{n}(t) = -2 \sum^{d}_{r = 1} \alpha_{r} \int^{t}_{0} \frac{1}{n}\E\left[\Tr\{P^{(l)}U^{(l)}(t_{1})P^{(l,r)} \varphi'_{r}(M^{(r)})P^{(r)} \}\right] dt_{1}$. In the full Wigner matrix case one has $ A_{n}(t) = -2\int^{t}_{0}\frac{1}{n}\E\Tr\{e^{itM}\varphi'(M)\}dt_{1}$, and the limiting behavior follows immediately from the Wigner semicircle law. In the case of submatrices with asymptotically regular intersections there are additional technical difficulties due to the fact that for the $n \times n$ submatrices $ M^{(l)} = P^{(l)}MP^{(l)}$, we have
\begin{equation}
\Tr\{P^{(l,r)}U^{(l)}(t)\varphi'_{r}(M^{(r)})P^{(l,r)}\} \ =  \sum_{j,k \in B_{l}\cap B_{r}}  U^{(l)}_{jk}(t)\varphi'_{r}(M^{(r)})_{jk},
\end{equation}
so that the summation is restricted to entries common to both submatrices, i.e. to $ j,k \in B_{l} \cap B_{r}$. It follows from lemma \ref{lem:diag} that the limit of $A^{(l)}_{n}(t)$ exists and equals
\begin{equation}
A^{(l)}(t) =  -2 \sum^{d}_{r = 1} \alpha_{r} \int^{t}_{0} \langle e^{it_{1}x}, \varphi'_{r} \ \rangle_{lr} \ dt_{1},
\end{equation}
where 
\begin{equation}
\langle e^{it_{1}x}, \ \varphi'_{r} \rangle_{lr}\  =  \frac{1}{4\pi^{2}\gamma_{l}\gamma_{r}} \int^{2\sqrt{\gamma_{l}}}_{-2\sqrt{\gamma_{l}}} \int^{2\sqrt{\gamma_{r}}}_{-2\sqrt{\gamma_{r}}} e^{it_{1}x}\varphi'_{r}(y)F_{lr}(x,y) \sqrt{4\gamma_{l}-x^{2}}\sqrt{4\gamma_{r}-y^{2}}dydx.
\end{equation}
This establishes (\ref{eqn_A}). The proof of lemma \ref{lem:diag} will be given in section \ref{subsec:bilin}.

We turn our attention to $Q^{(l)}_n(t)$. First it will be argued that the variance of the matrix entries converge to zero. Using the Poincar\'{e} inequality, (\ref{libby}), (\ref{eqn_unitrow1}), and proposition \ref{superman}, it follows that
\begin{eqnarray}
\label{golo}
\lefteqn{\mathbf{Var}\left\{U^{(l)}_{jj}(t_1)\right\}}\nonumber\\
 &\leq& \sum_{p\leq q, p,q \in B_l} \E\left[ (M^{(l)}_{pq})^2  \right]\E \left[ \left| D_{pq}U^{(l)}_{jj}(t)   \right|^2 \right]\nonumber\\ 
&\leq&\frac{4(\sigma^2+1)|t_1|}{n}  \sum^n_{p=1}\sum^n_{q=1} \E  \int^{t_1}_0 \left| U^{(l)}_{jp}(t_1-t_2)U^{(l)}_{jq}(t_2) \right|^2dt_2\nonumber\\ 
&\leq& \frac{4(\sigma^2+1)|t_1|}{n}  \sum^n_{p=1}  \E \int^{t_1}_0 \left| U^{(l)}_{jp}(t_1-t_2) \right|^2dt_2\nonumber\\
&\leq& \frac{4(\sigma^2+1)t^2_1}{n} = O\left(n^{-1} \right).
\end{eqnarray}
Note that in the course of the calculation (\ref{golo}), we showed that 
\begin{equation}
\label{golo2}
\sum_{p\leq q} \E \left[ \left| D_{pq}U^{(l)}_{jj}(t_1)   \right|^2 \right] \leq 4t^2_1.
\end{equation}

The Cauchy-Schwartz inequality implies
\begin{equation}
\int_\R (1+t^2_1)|\widehat{\varphi}_r(t_1)|dt_1\leq \sqrt{ \int_\R \frac{dt_1}{(1+ t^2_1)^{1/2+\epsilon}}} \cdot \sqrt{\int_\R(1+ t^2_1)^{5/2+\epsilon}|\widehat{\varphi}_r(t_1)|^2dt_1}.
\end{equation}
Since $||\varphi_r||_{5/2+\epsilon} < \infty$, we have the estimate
\begin{equation}
\label{golo3}
\int^{\infty}_{-\infty} t^2_1|\widehat{\varphi}_r(t_1)|dt_1 < \infty.
\end{equation}
Using the Cauchy-Schwartz inequality and (\ref{libby5}), it follows that
\begin{eqnarray}
\label{golo4}
\left| D_{pq}\varphi'_r(M^{(r)})_{jj}  \right|^2&=& \left| \int^{\infty}_{-\infty}t_1\widehat{\varphi}_r(t_1) D_{pq}U^{(l)}_{jj}(t_1)dt_1 \right|^2\nonumber\\ 
&\leq& \int^{\infty}_{-\infty} t^2_1|\widehat{\varphi}_r(t_1)|dt_1 \cdot  \int^{\infty}_{-\infty} |\widehat{\varphi} _r(t_1)|\cdot \left| D_{pq}U^{(l)}_{jj}(t_1)   \right|^2dt_1.
\end{eqnarray}
Using the Poincar\'{e} inequality, (\ref{golo2}), (\ref{golo4}), we obtain
\begin{eqnarray}
\label{golo5}
\lefteqn{\mathbf{Var}\left\{ \varphi'_r(M^{(r)})_{jj}  \right\}}\nonumber\\
&\leq& \sum_{p\leq q} \E\left[ (M^{(l)}_{pq})^2  \right]  \E \left[ \left| D_{pq} \varphi'_r(M^{(r)})_{jj}    \right|^2 \right]\nonumber\\
&\leq& \frac{(\sigma^2+1)}{n} \cdot \int^{\infty}_{-\infty} t^2_1|\widehat{\varphi}_r(t_1)|dt_1\cdot \sum_{p\leq q}  \int^{\infty}_{-\infty}|\widehat{\varphi}_r(t_1)|\E \left[ \left| D_{pq}U^{(l)}_{jj}(t_1)   \right|^2 \right]dt_1\nonumber\\
&\leq& \frac{4(\sigma^2+1)}{n} \cdot \left(\int^{\infty}_{-\infty} t^2_1|\widehat{\varphi}_r(t_1)|dt_1\right)^2.
\end{eqnarray}
Using (\ref{golo3}), (\ref{golo5}), (\ref{golo}), and the Cauchy-Schwartz inequality, we obtain
\begin{equation}
\label{golo6}
\mathbf{Cov}\{ U^{(l)}_{jj}(t_1), \varphi'_{r}(M^{(r)})_{jj}  \} \leq \sqrt{\mathbf{Var}\{U^{(l)}_{jj}(t_1)\}} \cdot \sqrt{\mathbf{Var}\left\{ \varphi'_r(M^{(r)})_{jj} \right\}} = O\left( n^{-1}  \right).
\end{equation}
Using (\ref{golo6}) it is justified to replace the expectation $ \E[ U^{(l)}_{jj}(t) \varphi'_{r}(M^{(r)})_{jj} ]$ by the product $ \E[ U^{(l)}_{jj}(t)] \cdot \E[ \varphi'_{r}(M^{(r)})_{jj} ]$, when passing to the limit. We use proposition $2.1$ of \cite{PRS}, which guarantees that for $f \in C^7_c(\R)$,
\begin{equation}
\label{golo7}
\lim_{n \to \infty} \E[ f(M)_{jj}] = \int_{\R} f(x) d\mu_{sc}(x).
\end{equation}
 In order to apply this asymptotic to the exponential function, which is smooth enough, we truncate the function in a smooth fashion outside the support of $\mu_{sc}$. We are justified in replacing the exponential function by its truncated version because the eigenvalues of the submatrices concentrate in the support of the semicircle law, with overwhelming probability. It is for this same reason that we may assume $\varphi'_r$ is compactly supported. This function is not sufficiently smooth, but we can avoid this problem by a density argument using standard convolution, and then apply the bound (\ref{ineq-MS}) on the variance of linear eigenvalue statistics.

Let $\eta \in C^{\infty}_c(\R)$ satisfy $\int_\R \eta(x)dx = 1$, and consider the mollifiers $ \eta_y(x) := y^{-1}\eta(xy^{-1})$. Then \\* $\varphi'_r * \eta_y \in C^{\infty}_c(\R)$, and using standard Fourier theory it can be shown that  
\begin{equation}
\label{smooth_approx}
\lim_{y \to 0} ||\varphi'_r - \varphi'_r * \eta_y  ||^2_{3/2+\epsilon} = 0.
\end{equation}

It follows from (\ref{golo6}) and (\ref{golo7}) that
\begin{eqnarray}
\label{limit_fin}
\lefteqn{ \lim_{n \to \infty} \frac{1}{n}   \sum^{n}_{j = 1} \mathbf{1}_{\{j \in B_{l} \cap B_r\}}  \E\left[ U^{(l)}_{jj}(t) \varphi'_{r}(M^{(r)})_{jj} \right]=}\nonumber\\
&& \gamma_{lr}\left( \frac{1}{2\pi \gamma_{l}} \int^{2\sqrt{\gamma_{l}}}_{-2\sqrt{\gamma_{l}}} e^{it\lambda} \sqrt{4\gamma_{l} - \lambda^{2}} d\lambda  \right)\cdot \left( \frac{1}{2\pi \gamma_{r}} \int^{2\sqrt{\gamma_{r}}}_{-2\sqrt{\gamma_{r}}} \varphi'_{r}(\mu) \sqrt{4\gamma_{r} - \mu^{2}} d\mu \right).
\end{eqnarray} 
Using (\ref{limit_fin}), we pass to the limit in (\ref{eqn:limQ}), and obtain (\ref{eqn:asympQ}). The limit of 
\[\bar{v}^{(l)}_{n}(t) = \frac{1}{n} \E[ u^{(l)}_{n}(t)] \approx \frac{\gamma_l}{|B_l|} \E[ \Tr \{P^{(l)}U^{(l)}(t)P^{(l)}\}],\]
is given by (rescaled) Wigner semicircle law, as a consequence of the zero eigenvalues. Alternatively, it can be computed using the bilinear form in lemma \ref{lem:diag}, with $f(x) = e^{itx}$ and $g(x) =1$. To facilitate solving the integral equation (\ref{eqn:Integral2}), below, it will be useful to rescale by $\gamma_l$. We obtain
\begin{eqnarray}
v^{(l)}(t) &=&  \frac{1}{\gamma_l} \langle e^{itx}, 1 \rangle_{ll}\nonumber\\
&=& \frac{1}{2\pi \gamma_{l} }\int^{2\sqrt{\gamma_{l}}}_{-2\sqrt{\gamma_{l}}}e^{itx}\sqrt{4\gamma_{l}-x^{2}}dx,
\end{eqnarray}
which establishes (\ref{limitforv}). The proposition is proved.
\end{proof}
Now using propositions \ref{eqn_rewrite}, \ref{rem_goto0}, \ref{prop:limit}, we pass to the limit $n_m \to \infty$ in (\ref{eqn:Integral1}), and determine that the limit $Y^{(l)}$ of every uniformly converging subsequence $\{Y^{(l)}_{n_m}\}$ satisfies the equation
\begin{equation}
\label{eqn:Integral2}
Y^{l}(x,t) + 2\gamma_{l}\int^{t}_{0}\int^{t_{1}}_{0} v^{(l)}(t_{1}-t_{2})Y^{(l)}(x,t_{2})dt_{2}dt_{1} = xZ(x)\left[ A^{(l)}(t) + Q^{(l)}(t)\right],
\end{equation}
where $A^{(l)}(t)$ is given by (\ref{eqn_A}), $ Q^{(l)}(t)$ is given by (\ref{eqn:asympQ}), and $v^{(l)}(t)$ is given by (\ref{limitforv}).

Now the argument will proceed by solving the integral equation (\ref{eqn:Integral2}). We use a version of the technique used by L. Pastur and A. Lytova in \cite{cite_key3}, to solve  this equation. Define
\begin{equation}
f(z) := (\sqrt{z^{2}-4\gamma_{l}}-z)/2\gamma_{l},
\end{equation}
which is the Stieltjes transform of the rescaled semicircle law, where $ \sqrt{z^{2}-4\gamma_{l}} = z + O(1/z)$ as $ z \to \infty$. A direct calculation shows that $\tilde{v}^{(l)} = f$, where $\tilde{v}^{(l)}$ denotes the generalized Fourier transform of $v^{(l)}$. We obtain
\begin{eqnarray}
\tilde{v}^{(l)}(z) &:=& \frac{1}{2\pi i\gamma_{l}} \int^{\infty}_{0}\int^{2\sqrt{\gamma_{l}}}_{-2\sqrt{\gamma_{l}}}e^{it(x-z)}\sqrt{4\gamma_{l}-x^{2}}dxdt\nonumber\\
&=&\frac{1}{2\pi \gamma_{l}}\int^{2\sqrt{\gamma_{l}}}_{-2\sqrt{\gamma_{l}}}\frac{1}{x-z}\sqrt{4\gamma_{l}-x^{2}}dx\nonumber\\
&=&f(z).
\end{eqnarray}
We check that
\begin{equation}
z+2\gamma_{l}f(z) = \sqrt{z^{2}-4\gamma_{l}} \neq 0, \ \ \Im{z} \neq 0.
\end{equation} 

Set
\begin{eqnarray}
T(t) &:=& \frac{i}{2\pi} \int_{L}\frac{e^{izt}dz}{z+2\gamma_{l}f(z)}= -\frac{1}{\pi} \int^{2\sqrt{\gamma_{l}}}_{-2\sqrt{\gamma_{l}}}\frac{e^{i\lambda t}d\lambda}{\sqrt{4\gamma_{l}-\lambda^{2}}},
\end{eqnarray}
after replacing the integral over $L$ by the integral over $[-2\gamma_{l}, 2\gamma_{l}]$, and taking into account that $ \sqrt{z^{2}-4\gamma_{l}} $ \ is \ $\pm i \sqrt{4\gamma_{l}-\lambda^{2}}$, on the upper and lower edges of the cut. Then the solution of (\ref{eqn:Integral2}) is 

\begin{equation}
Y^{(l)}(x,t) = -xZ(x)\int^{t}_{0}T(t-t_{1}) \frac{d}{dt_{1}}\left[A^{(l)}(t_{1})+ Q^{(l)}(t_{1})\right]dt_{1}.
\end{equation}

Then, with $F_{lr}$ given by (\ref{eqn:kernel}),
\begin{eqnarray}
\label{eqn:A}
\lefteqn{ \int^{t}_{0}T(t-t_{1}) \frac{d}{dt_{1}}A^{(l)}(t_{1})dt_{1}}\nonumber\\
&=& \frac{1}{2\pi^{3} \gamma_{l}} \sum^{d}_{r = 1}\frac{\alpha_{r}}{\gamma_{r}}\int^{t}_{0}\int^{2\sqrt{\gamma_{l}}}_{-2\sqrt{\gamma_{l}}}\int^{2\sqrt{\gamma_{l}}}_{-2\sqrt{\gamma_{l}}}\int^{2\sqrt{\gamma_{r}}}_{-2\sqrt{\gamma_{r}}} e^{i(t-t_{1})\lambda}e^{it_{1}x}\varphi'_{r}(y) \frac{\sqrt{4\gamma_{l}-x^{2}}\sqrt{4\gamma_{r}-y^{2}}}{\sqrt{4\gamma_{l}-\lambda^{2}}}\nonumber\\
&& \times F_{lr}(x,y)dydxd\lambda dt_{1}\nonumber\\
&=&\frac{1}{2i\pi^{3} \gamma_{l}} \sum^{d}_{r = 1}\frac{\alpha_{r}}{\gamma_{r}}\int^{2\sqrt{\gamma_{l}}}_{-2\sqrt{\gamma_{l}}}\int^{2\sqrt{\gamma_{l}}}_{-2\sqrt{\gamma_{l}}}\int^{2\sqrt{\gamma_{r}}}_{-2\sqrt{\gamma_{r}}} \frac{\left[e^{itx}-e^{it\lambda}\right]\varphi'_{r}(y)}{(x-\lambda)} \frac{\sqrt{4\gamma_{l}-x^{2}}\sqrt{4\gamma_{r}-y^{2}}}{\sqrt{4\gamma_{l}-\lambda^{2}}}F_{lr}(x,y)dydxd\lambda,\nonumber\\ 
\end{eqnarray}
and
\begin{eqnarray}
\label{eqn:Q}
 \lefteqn{\int^{t}_{0}T(t-t_{1}) \frac{d}{dt_{1}}Q^{(l)}(t_{1})dt_{1}}\nonumber\\
&=& -\frac{\gamma_{lr}(\sigma^{2}-2)}{4\pi^{3} \gamma_{l}} \sum^{d}_{r = 1}\frac{\alpha_{r}}{\gamma_{r}}\int^{t}_{0}\int^{2\sqrt{\gamma_{l}}}_{-2\sqrt{\gamma_{l}}}\int^{2\sqrt{\gamma_{l}}}_{-2\sqrt{\gamma_{l}}}\frac{ e^{i(t-t_{1})\lambda}}{\sqrt{4\gamma_{l}-\lambda^{2}}} e^{it_{1}\eta}\sqrt{4\gamma_{l} - \eta^{2}}d\eta d\lambda\nonumber\\
&& \times \int^{2\sqrt{\gamma_{r}}}_{-2\sqrt{\gamma_{r}}} \varphi'_{r}(\mu) \sqrt{4\gamma_{r}-\mu^{2}}d\mu dt_{1} \nonumber\\
&=& -\frac{\gamma_{lr}(\sigma^{2}-2)}{4\pi^{3} \gamma_{l}i} \sum^{d}_{r = 1}\frac{\alpha_{r}}{\gamma_{r}}\int^{2\sqrt{\gamma_{l}}}_{-2\sqrt{\gamma_{l}}}\int^{2\sqrt{\gamma_{l}}}_{-2\sqrt{\gamma_{l}}}\left[ \frac{e^{it\eta}-e^{it\lambda}}{\eta-\lambda}\right] \frac{\sqrt{4\gamma_{l} - \eta^{2}}}{\sqrt{4\gamma_{l}-\lambda^{2}}}d\eta d\lambda \cdot \int^{2\sqrt{\gamma_{r}}}_{-2\sqrt{\gamma_{r}}} \varphi'_{r}(\mu) \sqrt{4\gamma_{r}-\mu^{2}}d\mu.\nonumber\\ 
\end{eqnarray}

Using the regularity condition $ ||\varphi_{l}||_{5/2+\epsilon} < \infty$ for $  1 \leq l \leq d$, (\ref{eqn:A}), (\ref{eqn:Q}), and the dominated convergence theorem to pass to limit in (\ref{eqn:deriv}) yields
\begin{eqnarray}
& Z'(x)& \nonumber\\
 &=& i \ \sum^{d}_{l = 1} \alpha_{l} 
  \int^{\infty}_{-\infty} \widehat{\varphi}_{l}(t) Y^{(l)}(x,t) dt\nonumber\\
  &=& -\frac{xZ(x)}{2\pi^{3}} \ \sum^{d}_{l = 1} \sum^{d}_{r=1} \frac{\alpha_{l}\alpha_{r} }{\gamma_{l}\gamma_{r}}
  \int^{\infty}_{-\infty}\int^{2\sqrt{\gamma_{l}}}_{-2\sqrt{\gamma_{l}}}\int^{2\sqrt{\gamma_{l}}}_{-2\sqrt{\gamma_{l}}}\int^{2\sqrt{\gamma_{r}}}_{-2\sqrt{\gamma_{r}}} \widehat{\varphi}_{l}(t)\frac{\left[e^{itx}-e^{it\lambda}\right]{\varphi'_{r}(y)}}{(x-\lambda)}\nonumber\\
&&\times \frac{\sqrt{4\gamma_{l}-x^{2}}\sqrt{4\gamma_{r}-y^{2}}}{\sqrt{4\gamma_{l}-\lambda^{2}}} F_{lr}(x,y)dydxd\lambda  dt\nonumber\\
&& -\frac{\gamma_{lr}(\sigma^{2}-2)xZ(x)}{4\pi^{3}} \sum^{d}_{l = 1} \sum^{d}_{r = 1}\frac{\alpha_{l}\alpha_{r}}{\gamma_{l} \gamma_{r}} \int^{\infty}_{-\infty}  \int^{2\sqrt{\gamma_{l}}}_{-2\sqrt{\gamma_{l}}}\int^{2\sqrt{\gamma_{l}}}_{-2\sqrt{\gamma_{l}}}\left[ \frac{\widehat{\varphi_{l}}(t)e^{it\eta}-\widehat{\varphi_{l}}(t)e^{it\lambda}}{\eta-\lambda}\right] \frac{\sqrt{4\gamma_{l} - \eta^{2}}}{\sqrt{4\gamma_{l}-\lambda^{2}}}d\eta d\lambda \nonumber\\
&& \times \int^{2\sqrt{\gamma_{r}}}_{-2\sqrt{\gamma_{r}}} \varphi'_{r}(\mu) \sqrt{4\gamma_{r}-\mu^{2}}d\mu dt.
\end{eqnarray}

Applying the Fourier inversion formula (\ref{eqn:invfour}), it follows that
\begin{eqnarray}
\label{Zderivative}
&& Z'(x) =\nonumber\\
&& - \frac{xZ(x)}{2\pi^{3}} \ \sum^{d}_{l = 1} \sum^{d}_{r=1} \frac{\alpha_{l}\alpha_{r} }{\gamma_{l}\gamma_{r}}
 \int^{2\sqrt{\gamma_{l}}}_{-2\sqrt{\gamma_{l}}}\int^{2\sqrt{\gamma_{l}}}_{-2\sqrt{\gamma_{l}}}\int^{2\sqrt{\gamma_{r}}}_{-2\sqrt{\gamma_{r}}}\frac{\left[ \varphi_{l}(x)-\varphi_{l}(\lambda) \right]\varphi'_{r}(y)}{(x-\lambda)} \frac{\sqrt{4\gamma_{l}-x^{2}}\sqrt{4\gamma_{r}-y^{2}}}{\sqrt{4\gamma_{l}-\lambda^{2}}}\nonumber\\
 && \times F_{lr}(x,y)dydxd\lambda \nonumber\\
 && -xZ(x)\frac{\gamma_{lr}(\sigma^{2}-2)}{4\pi^{3}} \sum^{d}_{l = 1} \sum^{d}_{r = 1}\frac{\alpha_{l}\alpha_{r}}{\gamma_{l} \gamma_{r}}   \int^{2\sqrt{\gamma_{l}}}_{-2\sqrt{\gamma_{l}}}\int^{2\sqrt{\gamma_{l}}}_{-2\sqrt{\gamma_{l}}}\left[ \frac{\varphi_{l}(\eta)-\varphi_{l}(\lambda)}{\eta-\lambda}\right] \frac{\sqrt{4\gamma_{l} - \eta^{2}}}{\sqrt{4\gamma_{l}-\lambda^{2}}}d\eta d\lambda\nonumber\\
 && \times \int^{2\sqrt{\gamma_{r}}}_{-2\sqrt{\gamma_{r}}} \varphi'_{r}(\mu) \sqrt{4\gamma_{r}-\mu^{2}}d\mu.
\end{eqnarray}

We will use the fact that
\begin{equation}
\label{eqn:chebyint}
\int^{2\sqrt{\gamma}}_{-2\sqrt{\gamma}} \frac{\left[ T^{\gamma}_{k}(x) - T^{\gamma}_{k}(\lambda)  \right]}{(x-\lambda)}\frac{d\lambda}{\sqrt{4\gamma-\lambda^{2}}} = \ \frac{\pi}{2\sqrt{\gamma}} U^{\gamma}_{k-1}(x), \ \ k \geq 1.
\end{equation}
Expand the test function $\varphi_{l}$ in the Chebyshev basis to obtain
\begin{equation}
\label{chebyexpansion}
\varphi_{l}(x) = \sum^{\infty}_{k=0} (\varphi_{l})_{k} T^{\gamma_{l}}_{k}(x), \ \ \ (\varphi_{l})_{k} = \frac{2}{\pi} \int^{2\sqrt{\gamma_{l}}}_{-2\sqrt{\gamma_{l}}} \varphi_{l}(t) T^{\gamma_{l}}_{k}(t) \frac{dt}{\sqrt{4\gamma_{l}-t^{2}}}.
\end{equation}

Returning to the computation of $Z'(x)$,  using (\ref{Zderivative}), (\ref{eqn:chebyint}), and (\ref{chebyexpansion}), it follows that
\begin{eqnarray}
\label{eqn:expderiv}
Z'(x)&=&-\frac{xZ(x)}{4\pi^{2}} \ \sum^{d}_{l = 1} \sum^{d}_{r=1} \sum^{\infty}_{k=1}\frac{\alpha_{l}\alpha_{r} }{\gamma^{3/2}_{l}\gamma_{r}} (\varphi_{l})_{k}\int^{2\sqrt{\gamma_{l}}}_{-2\sqrt{\gamma_{l}}}\int^{2\sqrt{\gamma_{r}}}_{-2\sqrt{\gamma_{r}}}U^{\gamma_{l}}_{k-1}(x)\varphi'_{r}(y) \sqrt{4\gamma_{l}-x^{2}}\sqrt{4\gamma_{r}-y^{2}}\nonumber\\
&& \times F_{lr}(x,y)dydx\nonumber\\
&&  -xZ(x)\frac{\gamma_{lr}(\sigma^{2}-2)}{8\pi^{2}} \sum^{d}_{l = 1} \sum^{d}_{r = 1}\frac{\alpha_{l} \alpha_{r}}{\gamma^{3/2}_{l} \gamma_{r}} \sum^{\infty}_{k=1} (\varphi_{l})_{k}   \int^{2\sqrt{\gamma_{l}}}_{-2\sqrt{\gamma_{l}}} U^{\gamma_{l}}_{k-1}(\eta) \sqrt{4\gamma_{l} - \eta^{2}}d\eta\nonumber\\
&&\times \int^{2\sqrt{\gamma_{r}}}_{-2\sqrt{\gamma_{r}}} \varphi'_{r}(\mu) \sqrt{4\gamma_{r}-\mu^{2}}d\mu.
\end{eqnarray}

Using the orthogonality of the Chebyshev polynomials (\ref{orthogcheby2}), 
\begin{equation}
\label{useorthog}
 \sum^{\infty}_{k =1} (\varphi_{l})_{k}    \int^{2\sqrt{\gamma_{l}}}_{-2\sqrt{\gamma_{l}}} U^{\gamma_{l}}_{k-1}(\eta) \sqrt{4\gamma_{l} - \eta^{2}}d\eta = 2\sqrt{\gamma_{l}}\int^{2\sqrt{\gamma_{l}}}_{-2\sqrt{\gamma_{l}} } \frac{\lambda \varphi_{l}(\lambda)}{\sqrt{4\gamma_{l}-\lambda^{2}}} d\lambda.
 \end{equation}
Integrating by parts yields
\begin{equation}
\label{integparts}
\int^{2\sqrt{\gamma_{r}}}_{-2\sqrt{\gamma_{r}}} \varphi'_{r}(\mu) \sqrt{4\gamma_{r}-\mu^{2}}d\mu = \int^{2\sqrt{\gamma_{r}}}_{-2\sqrt{\gamma_{r}}} \frac{\mu \varphi_{r}(\mu)}{\sqrt{4\gamma_{r}-\mu^{2}}} d\mu,
\end{equation}
so that 
\begin{equation}
\label{chebytermone}
 \frac{\gamma_{lr}(\sigma^{2}-2)}{4\pi^{2}\gamma_{l}\gamma_{r}}\int^{2\sqrt{\gamma_{l}}}_{-2\sqrt{\gamma_{l}} } \frac{\lambda \varphi_{l}(\lambda)}{\sqrt{4\gamma_{l}-\lambda^{2}}} d\lambda \cdot \int^{2\sqrt{\gamma_{r}}}_{-2\sqrt{\gamma_{r}}} \frac{\mu \varphi_{r}(\mu)}{\sqrt{4\gamma_{r}-\mu^{2}}} d\mu =\frac{(\sigma^{2} - 2)}{4} \frac{\gamma_{lr}}{\sqrt{\gamma_{l}\gamma_{r}}} (\varphi_{l})_{1} (\varphi_{r})_{1}.
\end{equation}

Since
\begin{equation}
\frac{d}{dy} T^{\gamma}_{k}(y) = \frac{k}{2\sqrt{\gamma}}U^{\gamma}_{k-1}(y),
\end{equation}
we expand $\varphi_{r}(y) $ in the Chebyshev basis to obtain
\begin{equation}
\label{eqn:chebyderivexp}
\varphi'_{r}(y) = \frac{1}{2\sqrt{\gamma_{r}}} \sum^{\infty}_{m=1} m(\varphi_{r})_{m} U^{\gamma_{r}}_{m-1}(y).
\end{equation}

Recalling that $F_{lr}$ is given by (\ref{eqn:kernel}), it follows that
\begin{eqnarray}
\label{eqn:orthcomp}
&& \sum^{\infty}_{k=1} \sum^{\infty}_{m=1}  m(\varphi_{l})_{k} (\varphi_{r})_{m}\left[\int^{2\sqrt{\gamma_{l}}}_{-2\sqrt{\gamma_{l}}}\int^{2\sqrt{\gamma_{r}}}_{-2\sqrt{\gamma_{r}}}U^{\gamma_{l}}_{k-1}(x)U^{\gamma_{r}}_{m-1}(y) \sqrt{4\gamma_{l}-x^{2}}\sqrt{4\gamma_{r}-y^{2}}F_{lr}(x,y)dydx\right]\nonumber\\
&=&\sum^{\infty}_{k=1} \sum^{\infty}_{m=1}\sum^{\infty}_{j=0} m(\varphi_{l})_{k} (\varphi_{r})_{m}\int^{2\sqrt{\gamma_{l}}}_{-2\sqrt{\gamma_{l}}}\int^{2\sqrt{\gamma_{r}}}_{-2\sqrt{\gamma_{r}}}U^{\gamma_{l}}_{k-1}(x)U^{\gamma_{r}}_{m-1}(y)  U^{\gamma_{l}}_{j}(x)U^{\gamma_{r}}_{j}(y)\nonumber\\
&& \times\sqrt{4\gamma_{l}-x^{2}}\sqrt{4\gamma_{r}-y^{2}}dydx\frac{\gamma^{j+1}_{lr}}{\gamma^{j/2}_{l}\gamma^{j/2}_{r}}.\nonumber\\
&=&4\pi^{2}\gamma_{l}\gamma_{r}\sum^{\infty}_{k=1} k(\varphi_{l})_{k}(\varphi_{r})_{k} \left( \frac{\gamma^{k}_{lr}}{\gamma^{(k-1)/2}_{l}\gamma^{(k-1)/2)}_{r}}\right).
\end{eqnarray}

Using (\ref{eqn:orthcomp}), (\ref{useorthog}), (\ref{integparts}) and  (\ref{chebytermone}),  in (\ref{eqn:expderiv}), it follows that
\begin{eqnarray}
\label{eqn:Zprimefin}
Z'(x)&=& -\frac{xZ(x)}{2} \sum^{d}_{l=1}\sum^{d}_{r=1} \alpha_{l}\alpha_{r}  \left[  \frac{(\sigma^{2} - 2)}{2} \frac{\gamma_{lr}}{\sqrt{\gamma_{l}\gamma_{r}}} (\varphi_{l})_{1} (\varphi_{r})_{1} +   \sum^{\infty}_{k=1} k (\varphi_{l})_{k}(\varphi_{r})_{k} \left(\frac{\gamma_{lr}}{\sqrt{\gamma_{l}\gamma_{r}}}\right)^{k}\right]\nonumber\\
&=& -xZ(x)  \sum^{d}_{l=1}\alpha_{l}^{2} \left[ \frac{\sigma^{2}}{4} (\varphi_{l})^{2}_{1} +\frac{1}{2} \sum^{ \infty}_{k=2} k (\varphi_{l})^{2}_{k} \right] \nonumber\\
&&-xZ(x)\sum_{1\leq l < r \leq d}2 \alpha_{l}\alpha_{r} \left[ \frac{\sigma^{2}}{4} (\varphi_{l})_{1}(\varphi_{r})_{1} (\frac{\gamma_{lr}}{\sqrt{\gamma_{l}\gamma_{r}}}) + \frac{1}{2}\sum^{\infty}_{k=2} k(\varphi_{l})_{k}(\varphi_{r})_{k} \left(\frac{\gamma_{lr}}{\sqrt{\gamma_{l}\gamma_{r}}}\right)^{k} \right].\nonumber\\
\end{eqnarray}

We have obtained the expression for the asymptotic covariance (\ref{GaussCov}) in terms of Chebyshev polynomials. Now we write this expression as a contour integral. Let
\[\beta := \frac{\gamma_{lr}}{\sqrt{\gamma_{l}\gamma_{r}}},\]
make the change of coordinates $ x = 2\sqrt{\gamma_{l}}cos(\theta), \ y = 2\sqrt{\gamma_{r}}cos(\omega)$, and use (\ref{eqn:chebyexp}) to obtain that
\begin{eqnarray}
\lefteqn{\frac{1}{2}\sum^{\infty}_{k=1} k\beta^{k}(\varphi_{l})_{k}(\varphi_{r})_{k}}\nonumber\\
&=&\frac{2}{\pi^{2}} \sum^{\infty}_{k=1}k\beta^{k} \int^{2\sqrt{\gamma_{l}}}_{-2\sqrt{\gamma_{l}}}\int^{2\sqrt{\gamma_{r}}}_{-2\sqrt{\gamma_{r}}}\varphi_{l}(x)\varphi_{r}(y)T_{k}\left(\frac{x}{2\sqrt{\gamma_{l}}}\right)T_{k}\left(\frac{y}{2\sqrt{\gamma_{r}}}\right) \frac{dxdy}{\sqrt{4\gamma_{l}-x^{2}}\sqrt{4\gamma_{r}-y^{2}}}\nonumber\\
&=& \frac{2}{\pi^{2}}  \int^{\pi}_{0}\int^{\pi}_{0}\sum^{\infty}_{k=1} k\beta^{k}cos(k\theta)cos(k\omega)\varphi_{l}\left(2\sqrt{\gamma_{l}}cos\theta\right)\varphi_{r} \left(  2\sqrt{\gamma_{r}}cos\omega\right)d\theta d\omega.
\end{eqnarray}
Integrating by parts in $\theta, \omega$ it follows that
\begin{eqnarray}
\label{eqn:Bcov}
\frac{1}{2}\sum^{\infty}_{k=1} k\beta^{k}(\varphi_{l})_{k}(\varphi_{r})_{k}&=&\frac{2}{\pi^{2}}\int^{\pi}_{0}\int^{\pi}_{0}\varphi'_{l}\left(2\sqrt{\gamma_{l}}cos\theta\right)\varphi'_{r} \left(  2\sqrt{\gamma_{r}}cos\omega \right) \left[ \sum^{\infty}_{k=1}\frac{\beta^{k}}{k} sin(k\theta)sin(k\omega)  \right]\nonumber\\
 &&\times (2\sqrt{\gamma_{l}}sin\theta)(2\sqrt{\gamma_{r}} sin\omega) d\theta d\omega.
\end{eqnarray}

To evaluate the infinite sum above, recall that for $z \in \mathbb{C}$ with $ |z| <1$, we have 
\begin{equation}
\label{eqn:log}
\ln\left(1-z\right) = -\ \sum^{\infty}_{k=1} \frac{z^{k}}{k}.
\end{equation}
Noting that $\beta <1$, using (\ref{eqn:log}), it follows that
\begin{eqnarray}
\lefteqn{\sum^{\infty}_{k=1}\frac{\beta^{k}}{k} sin(k\theta)sin(k\omega)}\nonumber\\
&=& -\frac{1}{4} \sum^{\infty}_{k=1}\frac{\beta^{k}}{k}\left[ e^{ik\theta}-e^{-ik\theta}  \right]\left[ e^{ik\omega}-e^{-ik\omega}  \right]\nonumber\\
&=&-\frac{1}{4} \left[ -\ln\left(1-\beta e^{i(\theta+\omega)}\right)+\ln\left(1- \beta e^{i(\theta-\omega)}\right) -\ln\left(1-\beta e^{-i(\theta+\omega)}\right)  +\ln\left(1-\beta e^{-i(\theta-\omega)}\right) \right]\nonumber\\
&=&-\frac{1}{4} \left[\ln\left[\left(1-\beta e^{i(\theta-\omega)}  \right)\overline{\left(1-\beta e^{i(\theta-\omega)}    \right)} \right] - \ln\left[\left(1-\beta e^{i(\theta+\omega)}  \right)\overline{\left(1-\beta e^{i(\theta+\omega)}    \right)} \right] \right].
\end{eqnarray}

Making the change of coordinates $ z = \sqrt{\gamma_{l}}e^{i\theta}, \ w = \sqrt{\gamma_{r}}e^{i\omega}$, and recalling that $\beta = \frac{\gamma_{lr}}{\sqrt{\gamma_l\gamma_r}}$, this can be written as
\begin{eqnarray}
\sum^{\infty}_{k=1}\frac{\beta^{k}}{k} \sin(k\theta)\sin(k\omega)&=&-\frac{1}{4} \ln\left[\frac{ \left(1-\beta e^{i(\theta-\omega)}  \right)\overline{\left(1-\beta e^{i(\theta-\omega)}    \right)} }{ \left(1-\beta e^{i(\theta+\omega)}  \right)\overline{\left(1-\beta e^{i(\theta+\omega)}  \right)}} \right]\nonumber\\
&=&-\frac{1}{4} \ln\left[\frac{ \left(1-\frac{\gamma_{lr}}{\gamma_l\gamma_r} z\overline{w}  \right)\overline{\left(1-\frac{\gamma_{lr}}{\gamma_l\gamma_r} z\overline{w}  \right)} }{ \left(1-\frac{\gamma_{lr}}{\gamma_l\gamma_r} zw  \right)\overline{\left(1-\frac{\gamma_{lr}}{\gamma_l\gamma_r} zw  \right)} } \right]\nonumber\\
&=&-\frac{1}{4}\ln\left[ \frac{|\gamma_{lr}-z\overline{w}|^2}{|\gamma_{lr}-zw|^2}   \right]\nonumber\\
&=&\frac{1}{2}\ln\left|\frac{\gamma_{lr}-zw}{\gamma_{lr}-z\overline{w}}  \right|.
\label{eqn:critlog}
\end{eqnarray}
Combining  (\ref{eqn:Bcov}), (\ref{eqn:critlog}), and noting that
\[\left(2\sqrt{\gamma_l}\sin\theta  \right)\left(2\sqrt{\gamma_r}\sin\omega  \right)d\theta d\omega = \left(1-\frac{\gamma_l}{z^2}\right)\left(1- \frac{\gamma_r}{w^2}\right)dzdw,\]
it follows that 
\begin{eqnarray}
\label{eqn_ContInt}
\lefteqn{\frac{1}{2}\sum^{\infty}_{k=1} k\beta^{k}(\varphi_{l})_{k}(\varphi_{r})_{k}=}\nonumber\\ 
&&\frac{2}{\pi} \oint\limits_{ \begin{array}{cc}
            |z|^2=\gamma_{l} \\ \Im{z}>0
            \end{array} } \oint\limits_{\begin{array}{cc}
            |w|^2=\gamma_{r} \\ \Im{w}>0
            \end{array} }\varphi'_{l}\left(z+\frac{\gamma_{l}}{z}\right)\varphi'_{r} \left( w+\frac{\gamma_{r}}{w}\right)\frac{1}{2\pi}\ln\left| \frac{\gamma_{lr}-zw}{\gamma_{lr}-z\bar{w}} \right| \left( 1-\frac{\gamma_{l}}{z^{2}}  \right) \left(1-\frac{\gamma_{r}}{w^{2}}\right)dzdw.\nonumber\\
\end{eqnarray}

Compare (\ref{eqn:Zprimefin}) to (\ref{eqn:vard}). Using (\ref{eqn_ContInt}), (\ref{eqn:Zsub}), (\ref{eqn:Zprime}) and (\ref{eqn:Z}), it follows that the covariance can be written as
\begin{eqnarray}
\lefteqn{\lim_{n \to \infty} \mathbf{Cov} \{  \mathcal{N}^{(l)}[\varphi_{l}], \ \mathcal{N}^{(r)}[\varphi_{r}] \}}\nonumber\\
 &=& \frac{\sigma^{2}}{4}\left( \varphi_{l}\right)_{1}\left(\varphi_{r}\right)_{1}(\frac{\gamma_{lr}}{\sqrt{\gamma_{l}\gamma_{r}}}) + \frac{1}{2}\sum^{\infty}_{k=2} k(\varphi_{l})_{k}(\varphi_{r})_{k} \left(\frac{\gamma_{lr}}{\sqrt{\gamma_{l}\gamma_{r}}}\right)^{k}\nonumber\\
& =&
 \frac{2}{\pi} \oint\limits_{ \begin{array}{cc}
            |z|^2=\gamma_{l} \\ \Im{ \ z}>0
            \end{array} } \oint\limits_{\begin{array}{cc}
            |w|^2=\gamma_{r} \\ \Im{\ w}>0
            \end{array} }\varphi'_{l}\left(z+\frac{\gamma_{l}}{z}\right)\varphi'_{r} \left( w+\frac{\gamma_{r}}{w}\right)\frac{1}{2\pi}log\left| \frac{\gamma_{lr}-zw}{\gamma_{lr}-z\bar{w}} \right| \left( 1-\frac{\gamma_{l}}{z^{2}}  \right)\nonumber\\
  &&\times \left(1-\frac{\gamma_{r}}{w^{2}}\right)dwdz + \ \frac{\gamma_{lr} (\sigma^{2} - 2)}{4 \pi^{2} \gamma_{l}\gamma_{r}}\int^{2\sqrt{\gamma_{l}}}_{-2\sqrt{\gamma_{l}}}  \frac{\lambda \varphi_{l}(\lambda)}{\sqrt{4\gamma_{l} - \lambda^{2}}}d\lambda\int^{2\sqrt{\gamma_{r}}}_{-2\sqrt{\gamma_{r}}}  \frac{\mu \varphi_{r}(\mu)}{\sqrt{4\gamma_{r} - \mu^{2}}}d\mu.\nonumber
\end{eqnarray}

\subsection{The Bilinear Form}
\label{subsec:bilin}
The main goal of this section is to prove Lemma \ref{lem:diag}, to which we now turn our attention. Begin with the following definition.
\begin{defn}
\label{def:bilin-finiten}
Let $M$ be a Wigner matrix satisfying (\ref{condentries}),  and let $P^{(l)}$, $P^{(l,r)}$  be the projection matrices defined in (\ref{def-proj}) and (\ref{def-proj1}). For polynomial functions $f,g: \R \to \R$, define
\begin{eqnarray}
\langle f, g  \rangle_{lr,n} &:=& \frac{1}{n}\sum_{j,k \in B_l \cap B_r}\E\left[ f(M^{(l)})_{jk}\cdot g(M^{(r)})_{kj}\right]\nonumber\\
&=& \frac{1}{n}\E \left[ \Tr\left\{ P^{(l)} f(M^{(l)})\cdot P^{(l,r)}\cdot g(M^{(r)})P^{(r)} \right\}\right].
\label{form:bilin}
\end{eqnarray}
\end{defn}

The large $n$ limit of $ \langle f, g  \rangle_{lr,n}$ exists for polynomial functions because all moments of the matrix entries of $M$ are finite. 
Then $ \lim_{n \to \infty} \langle f, g  \rangle_{lr,n} = \langle f, g  \rangle_{lr},$ where $\langle \cdot, \cdot \rangle_{lr}$ 
is the bilinear form defined in definition \ref{def:bilin}.  

We will compute the bilinear form $\langle f, g \rangle_{lr} $ for monomial functions  $ f(x) = x^{k}, \ g(x) = x^{q}$. 
We will also consider the random variables $n^{-1}\Tr \{P^{(l)}f(M^{(l)})P^{(l,r)} g(M^{(r)})P^{(r)}\},$ 
and prove their convergence almost surely to the non-random limit described in lemma \ref{lem:diag}. To this end, we will use some results and techniques 
from 
Free Probability. We refer the reader to \cite{cite_key1}  for the relevant background concerning noncommutative probability spaces, 
asymptotic freeness of Wigner matrices, as well as the definition and the 
properties of the multilinear free cumulant functionals $\kappa_{p},$ for $ p \geq 1$. 

Consider the matrices $ M, P^{(l)}, P^{(r)}$ as noncommutative random variables in the noncommutative probability spaces 
$ (Mat_{n}(\mathbb{C}), \E [\frac{1}{n} \Tr])$ and also $(Mat_n(\C), \frac{1}{n}\Tr\{\cdot\})$. Since $M$ is a Wigner random matrix and 
$\{P^{(l)}, P^{(r)}\}$ are deterministic Hermitian matrices, it follows from part $(i)$ of Theorem $5.4.5$ in \cite{cite_key1} 
that $M$ is asymptotically free from $\{ P^{(l)}, P^{(r)}\}$ with respect to the functional $n^{-1}\E\Tr(\cdot)$. In addition, it 
follows from part $(ii)$ of Theorem $5.4.5$ in \cite{cite_key1} that $M$ is almost surely asymptotically free from $\{P^{(l)},P^{(r)}\}$ 
with respect to the functional $n^{-1}\Tr(\cdot)$.  The collection
of all non-crossing partitions over a set with $p$ letters is denoted below by $NC(p)$. An important consequence of the asymptotic freeness of these 
matrices is that mixed free cumulants of $M$ and $ \{P^{(l)}, P^{(r)}\}$ vanish in the limit, with respect to both functionals, see Theorem 
$5.3.15$ of \cite{cite_key1}. Therefore, letting $\kappa_{\pi}$ denote a product of free cumulant functionals corresponding to the block structure 
of the partition $\pi$, it follows that
\begin{eqnarray}
\langle x^{k}, \ x^{q} \rangle_{lr} &=& \lim_{n \to \infty} \frac{1}{n}\E\left[\Tr\left\{ P^{(l)}\left(P^{(l)}MP^{(l)}\right)^{k}\left(P^{(r)}MP^{(r)}\right)^{q}P^{(r)}\right\} \right]\nonumber\\
&=& \lim_{n \to \infty} \frac{1}{n}\E\left[\Tr \{P^{(l)}MP^{(l)} \cdots P^{(l)}MP^{(l)}P^{(r)}MP^{(r)}\cdots P^{(r)}MP^{(r)}  \right]\nonumber\\
&=& \sum_{\pi \in NC(2(k+q)+1)} \kappa_{\pi}(P^{(l)}, M, P^{(l)}, \cdots, M, P^{(l)},P^{(r)},M, \cdots, M, P^{(r)})\nonumber\\
&=& \sum_{ \begin{array}{cc}
            \pi_{1} \in NC(odd),\pi_{2} \in NC(even) \\ \pi_{1} \cup \pi_{2} \in NC(2(k+q)+1)
            \end{array} } \kappa_{\pi_{2}}(M)\kappa_{\pi_{1}}(P^{(l)}, \cdots, P^{(l,r)}, \cdots, P^{(r)}),\nonumber\\
\end{eqnarray}
and also that almost surely
\begin{eqnarray}
\lefteqn{ \lim_{n \to \infty} \frac{1}{n}\Tr\left\{ P^{(l)}\left(P^{(l)}MP^{(l)}\right)^{k}\left(P^{(r)}MP^{(r)}\right)^{q}P^{(r)}\right\}}\nonumber\\
&=& \sum_{\pi \in NC(2(k+q)+1)} \kappa_{\pi}(P^{(l)}, M, P^{(l)}, \cdots, M, P^{(l)},P^{(r)},M, \cdots, M, P^{(r)})\nonumber\\
&=&  \sum_{ \begin{array}{cc}
            \pi_{1} \in NC(odd),\pi_{2} \in NC(even) \\ \pi_{1} \cup \pi_{2} \in NC(2(k+q)+1)
            \end{array} } \kappa_{\pi_{2}}(M) \ \kappa_{\pi_{1}}(P^{(l)}, \cdots, P^{(l,r)}, \cdots, P^{(r)}).\nonumber\\
\end{eqnarray}

Above $NC(odd)$, for example, denotes the set of non-crossing partitions on the odd integers in the indicated set. Since the calculation of the joint moments in each non-commutative probability space $(Mat_n(\C), n^{-1}\E\Tr)$ and $(Mat_n(\C), n^{-1}\Tr)$ is identical, we make no distinction between their free cumulants. Lets denote by $NCP(p)$ the set of all non-crossing partitions over $p$ letters which are also pair partitions. Recall that $NC(p)$ is a poset, the notion of partition refinement induces a partial order on $NC(p)$, which will be denoted by $ \pi \leq \sigma$ if, with $\pi, \sigma \in NC(p)$, each block of $\pi$ is contained within a block of $\sigma$. Now a notion of the complement of a partition will be developed.

\begin{defn}
 With $ \pi \in NC(p_{1})$, define the \emph{non-crossing complement}  $\pi^{c} \in NC(p_{2})$ to be the unique non-crossing partition on $p_{2}$ letters so that $\pi \cup \pi^{c} \in NC(p_{1}+p_{2})$, and $ \sigma \leq \pi^{c}$ for all other $ \sigma \in NC(p_{2})$ satisfying $ \pi \cup \sigma \in NC(p_{1}+p_{2})$.
\end{defn}

\indent
Since the limiting spectral distribution of $M$ is Wigner semicircle law with respect to the functional $n^{-1}\E\Tr$, and almost surely the Wigner semicircle law with respect to the functional $n^{-1}\Tr$, we have that $\kappa_{2}(M) = 1$ and  $ \kappa_{p}(M) = 0$ for $ p\neq 2$. It follows now that 
 \begin{equation}
  \langle x^{k}, \ x^{q} \rangle_{lr} = 0, \ \ \text{if} \ \ k+q \ \ \text{is odd},
 \end{equation} 
  and also that almost surely
\begin{equation}
 \lim_{n \to \infty} \frac{1}{n}\Tr\left\{\left(P^{(l)}MP^{(l)}\right)^{k}\left(P^{(r)}MP^{(r)}\right)^{q}\right\} = 0,  \ \ \text{if} \ \ k+q \ \ \text{is odd}.
\end{equation}  
  
Supposing then that $k+q$ is even,  and continuing the calculation,
\begin{eqnarray}
\langle x^{k}, \ x^{q} \rangle_{lr} &=& \sum_{\pi_{2} \in NCP( even)} \ \sum_{\begin{array}{cc}\pi_{1} \in NC(odd)  \\ \pi_{1} \cup \pi_{2} \in NC(2(k+q)+1)  \end{array}}\kappa_{\pi_{1}}(P^{(l)},  \cdots, P^{(r)})\nonumber\\
&=& \sum_{\pi_{2} \in NCP( k+q)} \ \sum_{\begin{array}{cc}\pi_{1} \in NC(k+q+1)  \\ \pi_{1} \leq \pi^{\text{c}}_{1}\end{array}}\kappa_{\pi_{1}}(P^{(l)},  \cdots, P^{(r)})\nonumber\\
&=& \sum_{\pi_{2} \in NCP( k+q)} \prod^{|\pi^{\text{c}}_{1}|}_{i = 1} \lim_{n \rightarrow \infty} \frac{1}{n}\E\Tr \{\prod_{P^{(j)} \in S_{i}} P^{(j)}   \},\nonumber\\ 
\end{eqnarray}
where $\ \pi^{\text{c}}_{1} = \{S_{1}, \cdots, S_{|\pi^{\text{c}}_{1}|} \} $ are the blocks of the non-crossing complement of a given partition. We have used the complement partitions to write the sum of the free cumulants over the partitions of the projection matrices into a product of joint moments of the projection matrices.

Similarly, with respect to the functional $n^{-1}\Tr$, we have that almost surely
\begin{eqnarray}
\lefteqn{ \lim_{n \to \infty} \frac{1}{n}\Tr\left\{\left(P^{(l)}MP^{(l)}\right)^{k}\left(P^{(r)}MP^{(r)}\right)^{q}\right\}}\nonumber\\
 &=& \sum_{\pi_{2} \in NCP( even)} \ \sum_{\begin{array}{cc}\pi_{1} \in NC(odd)  \\ \pi_{1} \cup \pi_{2} \in NC(2(k+q)+1)  \end{array}}\kappa_{\pi_{1}}(P^{(l)},  \cdots, P^{(r)})\nonumber\\
&=& \sum_{\pi_{2} \in NCP( k+q)} \ \sum_{\begin{array}{cc}\pi_{1} \in NC(k+q+1)  \\ \pi_{1} \leq \pi^{\text{c}}_{1}\end{array}}\kappa_{\pi_{1}}(P^{(l)},  \cdots, P^{(r)})\nonumber\\
&=& \sum_{\pi_{2} \in NCP( k+q)} \prod^{|\pi^{\text{c}}_{1}|}_{i = 1} \lim_{n \rightarrow \infty} \frac{1}{n}\Tr \{\prod_{P^{(j)} \in S_{i}} P^{(j)}   \}.
\end{eqnarray}
\indent
Recall that the non-crossing pair partitions are in bijection with Dyck paths, $ NCP(k+q) \to D_{(k+q)}.$ Thus the computation for each functional reduces to counting Dyck paths. The number of Dyck paths $(h(0), \cdots, h(k+q))$ with $h(k) = j$ is

\begin{equation}
\left[ \binom{k}{\frac{k+j}{2}} - \binom{k}{\frac{k+j+2}{2}} \right]\left[ \binom{q}{\frac{q+j}{2}} - \binom{q}{\frac{q+j+2}{2}} \right] = \frac{(j+1)^{2}}{(k+1)(q+1)}\binom{k+1}{\frac{k+j+2}{2}}\binom{q+1}{\frac{q+j+2}{2}}.\nonumber
\end{equation}

Note that $ \lim_{n \to \infty} n^{-1}\Tr\left(P^{(l)}\right)^a\left(P^{(r)}\right)^b = \gamma_{lr}$, \ for any $a,b \geq 1$. Also note that below the partition $\pi^{c}_{1}$ depends on the Dyck path $ d \in D_{(k+q)}$(which corresponds to some non-crossing pair partition). Also note that by $|\pi^{c}_{1}|$ we denote the number of blocks of $\pi^{c}_{1}$. Suppose for now that both $k,q$ are even integers.

\indent

The height of the path at $h(k)$ must be even, say $h(k) = 2j$. Those blocks which consist only of the matrices $P^{(l)}$ will contribute a factor of $\gamma_l$ to the product of joint moments. The number of blocks which contain only the matrices $P^{(l)}$ corresponds to the number of down edges of the path in the first $k$ steps. Denote by $u$ the number of up edges and $d$ the number of down edges of the path up to step $k$. Then $u+d = k$ and $u-d = 2j$, which implies that $d = k/2 -j$. The number of blocks which contain only the matrices $P^{(r)}$ is equal to the number of up edges of the path in the final $q$ steps. This number corresponds to the exponent on the factor $\gamma_r$ in the product of joint moments. Denote now by $u$ the number of up edges and $d$ the number of down edges of the path in the final $q$ steps. The $u+d = q$ and $d-u = 2j$, which implies that $u = q/2-j$. The remaining blocks of the partition contain projection matrices of mixed type and will contribute a factor $\gamma_{lr}$ to the product of joint moments. Since the total number of blocks in the partition is $\frac{k+q}{2}+1$, the number of factors of $\gamma_{lr}$ in the product of joint moments is $2j+1$. Partitioning the Dyck paths into equivalence classes based on the height $h(k)$, we get that
\begin{eqnarray}
 \langle x^{k}, x^{q}  \rangle_{lr} &=&  \sum_{d \in D_{(k+q)}} \prod^{|\pi^{\text{c}}_{1}|}_{i=1} \lim_{n \rightarrow \infty} \frac{1}{n}\E\Tr \{\prod_{P^{(j)} \in S_{i}} P^{(j)} \}\nonumber\\ 
 &=& \sum^{\frac{k}{2}}_{j=0} \ \sum_{\begin{array}{cc} 
d \in D_{(k+q)} \\ h(k) = 2j
\end{array}} \gamma_{l}^{\frac{k}{2}-j} \gamma^{\frac{q}{2}-j}_{r} \gamma^{2j+1}_{lr}\nonumber\\
&=&\sum^{\frac{k}{2}}_{j=0} \ \frac{(2j+1)^{2}}{(k+1)(q+1)} 
  \binom{k+1}{\frac{k+2j+2}{2}} \binom{q+1}{\frac{q+2j+2}{2}} \gamma_{l}^{\frac{k}{2}-j} \gamma^{\frac{q}{2}-j}_{r} \gamma^{2j+1}_{lr},\nonumber
 \end{eqnarray}
and also, almost surely,
\begin{eqnarray}
\lefteqn{\lim_{n \to \infty} \frac{1}{n}\Tr\left\{\left(P^{(l)}MP^{(l)}\right)^{k}\left(P^{(r)}MP^{(r)}\right)^{q}\right\}}\nonumber\\
 &=&  \sum_{d \in D_{(k+q)}} \prod^{|\pi^{\text{c}}_{1}|}_{i=1} \lim_{n \rightarrow \infty} \frac{1}{n}\Tr \{\prod_{P^{(j)} \in S_{i}} P^{(j)} \}\nonumber\\ 
 &=& \sum^{\frac{k}{2}}_{j=0} \ \sum_{\begin{array}{cc} 
d \in D_{(k+q)} \\ h(k) = 2j
\end{array}} \gamma_{l}^{\frac{k}{2}-j} \gamma^{\frac{q}{2}-j}_{r} \gamma^{2j+1}_{lr}\nonumber\\
&=&\sum^{\frac{k}{2}}_{j=0} \ \frac{(2j+1)^{2}}{(k+1)(q+1)} 
  \binom{k+1}{\frac{k+2j+2}{2}} \binom{q+1}{\frac{q+2j+2}{2}} \gamma_{l}^{\frac{k}{2}-j} \gamma^{\frac{q}{2}-j}_{r} \gamma^{2j+1}_{lr}.\nonumber
 \end{eqnarray}

Now suppose that both $k,q$ are odd. The height of the path at $h(k)$ must be odd, say $h(k) = 2j+1$. Similar to the even case, the number of blocks which consist only of the matrices $P^{(l)}$ equals the exponent of $\gamma_l$ in the product of joint moments. The number of blocks which contain only the matrices $P^{(l)}$ corresponds to the number of down edges of the path in the first $k$ steps. Denote by $u$ the number of up edges and $d$ the number of down edges of the path up to step $k$. Then $u+d = k$ and $u-d = 2j+1$, which implies that $d = (k-1)/2 -j$. The number of blocks which contain only the matrices $P^{(r)}$ is equal to the number of up edges of the path in the final $q$ steps. This number corresponds to the exponent on the factor $\gamma_r$ in the product of joint moments. Denote now by $u$ the number of up edges and $d$ the number of down edges of the path in the final $q$ steps. The $u+d = q$ and $d-u = 2j+1$, which implies that $u = (q-1)/2-j$. The remaining blocks of the partition contain projection matrices of mixed type and will contribute a factor of $\gamma_{lr}$ to the product of joint moments. Since the total number of blocks in the partition is $\frac{k+q}{2}+1$, the number of factors of $\gamma_{lr}$ in the product of joint moments is $2j+2$. Partitioning the Dyck paths into equivalence classes based on the height $h(k)$, we get that
\begin{eqnarray}
 \langle x^{k}, x^{q}  \rangle_{lr} &=&  \sum_{d \in D_{(k+q)}} \prod^{|\pi^{\text{c}}_{1}|}_{i=1} \lim_{n \rightarrow \infty} \E\frac{1}{n}\Tr \{\prod_{P^{(j)} \in S_{i}} P^{(j)} \}\nonumber\\ 
 &=& \sum^{ \frac{k-1}{2} }_{j=0} \ \sum_{\begin{array}{cc} 
d \in D_{(k+q)} \\ h(k) = 2j+1
\end{array}} \gamma_{l}^{\frac{k-1}{2}-j} \gamma^{ \frac{q-1}{2}-j}_{r} \gamma^{2j+2}_{lr}\nonumber\\
&=&\sum^{ \frac{k-1}{2}}_{j=0} \ \frac{(2j+2)^{2}}{(k+1)(q+1)} 
  \binom{k+1}{\frac{k+2j+3}{2}} \binom{q+1}{\frac{q+2j+3}{2}} \gamma_{l}^{\frac{k-1}{2}-j} \gamma^{\frac{q-1}{2}-j}_{r} \gamma^{2j+2}_{lr},\nonumber
 \end{eqnarray}
and also, almost surely,
\begin{eqnarray}
\lefteqn{ \lim_{n \to \infty} \frac{1}{n}\Tr\left\{\left(P^{(l)}MP^{(l)}\right)^{k}\left(P^{(r)}MP^{(r)}\right)^{q}\right\}}\nonumber\\
 &=&  \sum_{d \in D_{(k+q)}} \prod^{|\pi^{\text{c}}_{1}|}_{i=1} \lim_{n \rightarrow \infty}\frac{1}{n}\Tr \{\prod_{P^{(j)} \in S_{i}} P^{(j)} \}\nonumber\\ 
 &=& \sum^{ \frac{k-1}{2} }_{j=0} \ \sum_{\begin{array}{cc} 
d \in D_{(k+q)} \\ h(k) = 2j+1
\end{array}} \gamma_{l}^{\frac{k-1}{2}-j} \gamma^{ \frac{q-1}{2}-j}_{r} \gamma^{2j+2}_{lr}\nonumber\\
&=&\sum^{ \frac{k-1}{2}}_{j=0} \ \frac{(2j+2)^{2}}{(k+1)(q+1)} 
  \binom{k+1}{\frac{k+2j+3}{2}} \binom{q+1}{\frac{q+2j+3}{2}} \gamma_{l}^{\frac{k-1}{2}-j} \gamma^{\frac{q-1}{2}-j}_{r} \gamma^{2j+2}_{lr}.\nonumber
 \end{eqnarray}
Now for polynomials $f(x) = \sum^{p}_{i=0} a_{i}x^{i}$ and $ g(x) = \sum^{m}_{j=0}b_{j}x^{j},$ we have by linearity that
\begin{equation}
\langle f, g \rangle_{lr} = \sum^{p}_{i=0}\sum^{m}_{j=0}a_{i}b_{j}\langle x^{i}, x^{j} \rangle_{lr}.
\label{neednow}
\end{equation}
The intersection of countably many events, each with probability $1$, occurs with probability $1$. There are only countably many polynomials with rational coefficients, so we have proved that the random variables
\[ \frac{1}{n} \Tr \{P^{(l)}f(M^{(l)})P^{(l,r)} g(M^{(r)})P^{(r)}\},\]
converge almost surely to the same, non-random limit given by the right hand side of (\ref{neednow}), whenever $f,g$ are polynomials with rational coefficients.

\indent
The bilinear form $ \langle f, g \rangle_{lr}$ is diagonalized in the next proposition. 
\begin{prop}
\label{prop_diag}
The two families $\{U^{\gamma_{l}}_{k}\}^{\infty}_{k=0}$ and $\{U^{\gamma_{r}}_{q}\}^{\infty}_{q=0}$ of rescaled Chebyshev polynomials of the second kind are biorthogonal with respect to the bilinear form (\ref{form:bilin}). More precisely,
\begin{equation}
\frac{1}{\sqrt{\gamma_{l}\gamma_{r}}} \ \langle U^{\gamma_{l}}_{k}, U^{\gamma_{r}}_{q} \rangle_{lr} \ =  \ \delta_{kq} \ (\frac{\gamma_{lr}}{\sqrt{\gamma_{l}\gamma_{r}}})^{k+1}.
\end{equation}
\end{prop}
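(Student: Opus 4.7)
\noindent First I would exploit parity. From the explicit polynomial expansion of $U^\gamma_k$ given just before Proposition~\ref{prop:limit}, the polynomial $U^\gamma_k$ has degree $k$ and the same parity as $k$, so $U^{\gamma_l}_k(x)\,U^{\gamma_r}_q(y)$ is a linear combination of monomials $x^iy^j$ with $i+j\equiv k+q\pmod 2$. The free-cumulant calculation preceding this proposition shows that $\langle x^i,x^j\rangle_{lr}=0$ whenever $i+j$ is odd, since the only nonvanishing free cumulant of the semicircular limit of $M$ is $\kappa_2$, forcing $\pi_2$ to be a non-crossing pair partition of an even number of points. Linearity of the bilinear form then yields $\langle U^{\gamma_l}_k,U^{\gamma_r}_q\rangle_{lr}=0$ whenever $k+q$ is odd, which matches the Kronecker $\delta_{kq}$ on the right-hand side in that case.

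\noindent For the remaining same-parity case, the plan is to substitute the monomial expansion of the Chebyshev polynomials into the bilinear form and apply the explicit Dyck-path formulas for $\langle x^i,x^j\rangle_{lr}$ derived just above. After reindexing the Dyck-path sum by its midpoint height $h$, the dependence on $\gamma_l,\gamma_r,\gamma_{lr}$ separates cleanly, and the inner sums over the Chebyshev expansion indices can be exchanged with the sum over $h$. One then groups the resulting binomial factors using ballot-number identities, coupling the Dyck-path count with the Chebyshev coefficients.

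\noindent The main technical obstacle is the combinatorial evaluation of this multiple sum. I would carry it out via a generating-function argument: from the classical identity $\sum_{k\geq 0}U^\gamma_k(x)\,t^k=(1-xt/\sqrt{\gamma}+t^2)^{-1}$, the two-variable generating function
\[H(z,w):=\sum_{k,q\geq 0}z^k w^q\,\langle U^{\gamma_l}_k,U^{\gamma_r}_q\rangle_{lr}\]
can be identified formally with a double resolvent in the bilinear form. The Dyck-path expression for $\langle x^i,x^j\rangle_{lr}$, combined with a Chu--Vandermonde collapse (in the spirit of the band-matrix computation referenced in the introduction), should evaluate $H(z,w)$ to the rational function $\gamma_{lr}/(1-zw\,\gamma_{lr}/\sqrt{\gamma_l\gamma_r})$. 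Extracting Taylor coefficients in $z$ and $w$ then yields $\langle U^{\gamma_l}_k,U^{\gamma_r}_q\rangle_{lr}=\delta_{kq}\,\gamma_{lr}^{k+1}/(\gamma_l\gamma_r)^{k/2}$; dividing by $\sqrt{\gamma_l\gamma_r}$ gives the claimed identity.
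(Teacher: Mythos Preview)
Your parity argument is correct and matches the paper exactly. For the same-parity case, however, your route diverges from the paper's and leaves the decisive step unproved.

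The paper does \emph{not} expand both Chebyshev polynomials. Instead it expands only the right-hand one, computing $\langle (x/2\sqrt{\gamma_l})^{m},\,U^{\gamma_r}_{q}\rangle_{lr}$ directly from the Dyck-path formula for $\langle x^{m},x^{q-2p}\rangle_{lr}$. After swapping the sum over the Chebyshev index $p$ with the sum over the midpoint height $j$, the inner $p$-sum is recognized \emph{verbatim} as a terminating hypergeometric ${}_2F_1(-(q-j),-(q+j+1);-2q;1)$ (and its odd analogue), to which Chu--Vandermonde applies directly, giving $0$ for $j<q$ and an explicit nonzero value for $j=q$. This immediately yields $\langle x^{m},U^{\gamma_r}_q\rangle_{lr}=0$ for all $m<q$ of the correct parity, hence $\langle U^{\gamma_l}_k,U^{\gamma_r}_q\rangle_{lr}=0$ for $k<q$ by linearity; the diagonal value comes from the leading monomial alone. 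The one-sided expansion is exactly what makes Chu--Vandermonde land on the nose.

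Your proposal expands both sides and then packages everything into the generating function $H(z,w)$. That is a legitimate strategy, and the target value $H(z,w)=\gamma_{lr}/(1-zw\,\gamma_{lr}/\sqrt{\gamma_l\gamma_r})$ is correct, but the sentence ``combined with a Chu--Vandermonde collapse \ldots\ should evaluate $H(z,w)$'' is precisely the whole computation, and you have not indicated how the collapse actually occurs in this two-sided setting. In fact the cleanest way to close your argument is to notice that, after separating by height $h$, each of the two one-sided sums over the Chebyshev index is itself a Chu--Vandermonde sum --- which is essentially the paper's observation applied twice. So your approach works, but only once you import the paper's one-sided trick; the generating-function wrapper does not by itself replace that identification.
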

The Proposition \ref{prop_diag} is proven in the Appendix 2.
\begin{rem}
Previously we have shown that whenever $f,g$ are polynomials with rational coefficients, almost surely (a.s.)
\[\lim_{n \rightarrow \infty} \frac{1}{n} \Tr\left\{ P^{(l)} f(M^{(l)})\cdot P^{(l,r)}\cdot g(M^{(r)})P^{(r)} \right\} = \langle f, g \rangle_{lr}.\]

The Chebyshev polynomials have rational coefficients, so it follows from the above argument that a.s.

\begin{equation}
\label{cheby-lim-as}
\frac{1}{\sqrt{\gamma_{l}\gamma_{r}}} \lim_{n \to \infty}\frac{1}{n}\Tr\{ P^{(l)} U^{\gamma_{l}}_k(M^{(l)})P^{(l,r)}U^{\gamma_{r}}_q(M^{(r)})P^{(r)}\}  =   \delta_{kq} \ (\frac{\gamma_{lr}}{\sqrt{\gamma_{l}\gamma_{r}}})^{k+1}.
\end{equation}
\end{rem}

Now the bilinear form $ \langle \cdot, \cdot \rangle_{lr}$ will be extended to functions other than polynomials. For this part of the argument, the 
bound on the variance of linear eigenvalue statistics in \ref{ineq-MS} is essential.
\begin{prop}
\label{lem:exten}
Let $ f,g \in \mathcal{H}_s$  for some $ s > \frac{3}{2} $, i.e. for some $ \epsilon > 0$,
\begin{equation}
\int^{\infty}_{-\infty} |\widehat{f}(t)|^{2}\left(1 + |t|\right)^{3+\epsilon} dt < \infty, \ \int^{\infty}_{-\infty} |\widehat{g}(t)|^{2}\left(1 + |t|\right)^{3+\epsilon} dt < \infty.
\end{equation}
Then the limit of $\langle f, g \rangle_{lr,n}$ (see definition \ref{def:bilin-finiten}) as $n \to \infty$ exists and
\begin{equation}
\langle f, g \rangle_{lr}\  =  \frac{1}{4\pi^{2}\gamma_{l}\gamma_{r}} \int^{2\sqrt{\gamma_{l}}}_{-2\sqrt{\gamma_{l}}} \int^{2\sqrt{\gamma_{r}}}_{-2\sqrt{\gamma_{r}}} f(x)g(y)F_{lr}(x,y) \sqrt{4\gamma_{l}-x^{2}}\sqrt{4\gamma_{r}-y^{2}}dydx,
\end{equation}
and also, almost surely,
\begin{eqnarray}
\lefteqn{\lim_{n \rightarrow \infty} \frac{1}{n} \Tr\left\{ P^{(l)} f(M^{(l)})\cdot P^{(l,r)}\cdot g(M^{(r)})P^{(r)} \right\}}\nonumber\\
&=& \frac{1}{4\pi^{2}\gamma_{l}\gamma_{r}} \int^{2\sqrt{\gamma_{l}}}_{-2\sqrt{\gamma_{l}}} \int^{2\sqrt{\gamma_{r}}}_{-2\sqrt{\gamma_{r}}} f(x)g(y)F_{lr}(x,y)\sqrt{4\gamma_{l}-x^{2}}\sqrt{4\gamma_{r}-y^{2}}dydx,\nonumber\\
\end{eqnarray}
where the kernel $F_{lr}(x,y)$ is given by (\ref{eqn:kernel}).
\end{prop}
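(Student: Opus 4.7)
The plan is to reduce to the polynomial case already proved above by approximating $f$ and $g$ uniformly on a compact interval that, with overwhelming probability, contains the spectra of both $M^{(l)}$ and $M^{(r)}$. The Sobolev embedding $\mathcal{H}_s\hookrightarrow C^0(\R)\cap L^\infty(\R)$ (valid for any $s>1/2$) ensures that $f$ and $g$ are bounded continuous functions on the real line. Fix $R>\max(2\sqrt{\gamma_l},2\sqrt{\gamma_r})+1$ and introduce the event
\[
\mathcal E_n:=\{\|M^{(l)}\|\le R\text{ and }\|M^{(r)}\|\le R\}.
\]
Standard tail estimates on the largest eigenvalue of Wigner matrices give $\P(\mathcal E_n^c)\le e^{-c n^{c'}}$, so by Borel--Cantelli the event $\mathcal E_n$ occurs almost surely for all sufficiently large $n$. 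By the Weierstrass approximation theorem, choose sequences of polynomials $\{P_N\},\{Q_N\}$ with rational coefficients satisfying $\|f-P_N\|_{C[-R,R]}+\|g-Q_N\|_{C[-R,R]}\to 0$ as $N\to\infty$.

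The key quantitative input is the trace Cauchy--Schwarz inequality
\[
\left|\frac{1}{n}\Tr\{P^{(l)} h_1(M^{(l)})P^{(l,r)} h_2(M^{(r)})P^{(r)}\}\right|^{2}\le\frac{1}{n}\Tr\{h_1(M^{(l)})^{2}\}\cdot\frac{1}{n}\Tr\{h_2(M^{(r)})^{2}\}.
\]
On $\mathcal E_n$ one has $\frac{1}{n}\Tr\{h(M^{(l)})^2\}\le\|h\|_{C[-R,R]}^{2}$, so writing $T_n(h_1,h_2)$ for the expression inside the absolute value, the splitting $T_n(f,g)-T_n(P_N,Q_N)=T_n(f-P_N,g)+T_n(P_N,g-Q_N)$ yields, on $\mathcal E_n$, the bound
\[
|T_n(f,g)-T_n(P_N,Q_N)|\le\|f-P_N\|_{C[-R,R]}\|g\|_{C[-R,R]}+\|P_N\|_{C[-R,R]}\|g-Q_N\|_{C[-R,R]}.
\]
For each fixed $N$ the polynomial case already established gives $T_n(P_N,Q_N)\to\langle P_N,Q_N\rangle_{lr}$ almost surely as $n\to\infty$. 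Intersecting the countably many such a.s.\ events, on a set of full probability one may first let $n\to\infty$ and then $N\to\infty$ to conclude $T_n(f,g)\to\langle f,g\rangle_{lr}$, provided $\langle P_N,Q_N\rangle_{lr}\to\langle f,g\rangle_{lr}$. This last convergence is an application of dominated convergence to the compactly supported integral representation, using the uniform convergence $P_N\to f$ and $Q_N\to g$ on the integration domain.

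The corresponding statement for $\langle f,g\rangle_{lr,n}=\E T_n(f,g)$ is handled by the same polynomial approximation, together with the crude universal bound $|T_n(f,g)\mathbf{1}_{\mathcal E_n^c}|\le\|f\|_\infty\|g\|_\infty\mathbf{1}_{\mathcal E_n^c}$, whose expectation is exponentially small, and the polynomial expectation convergence shown earlier. The main technical obstacle is the behaviour of the kernel $F_{lr}(x,y)$ near the boundary of the integration region, and in particular the fact that the series defining $F_{lr}$ is only conditionally meaningful when $\beta:=\gamma_{lr}/\sqrt{\gamma_l\gamma_r}=1$ (which occurs when $l=r$). In that regime the right-hand side must be read through the Chebyshev expansion (\ref{cool}), and the assumption $s>3/2$ comfortably ensures absolute convergence of the resulting series $\sum_k(f)_k(g)_k\gamma_{lr}^{k+1}/(\gamma_l^{k/2}\gamma_r^{k/2})$, so that both sides of the claimed identity make sense and agree in the limit.
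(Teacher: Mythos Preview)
Your argument is correct and takes a genuinely different route from the paper.

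The paper extends the bilinear form by approximating $f,g$ in the Sobolev norm $\|\cdot\|_{\mathcal{H}_{3/2+\epsilon}}$: it first reduces to Schwartz functions, multiplies by a smooth cutoff $h$ supported on $[-4,4]$, and then constructs polynomials $\tilde{\tilde p}_j$ (obtained by twice integrating polynomials approximating $f''$) so that $\|f-h\tilde{\tilde p}_j\|_{\mathcal H_2}\to0$. The paper declares Shcherbina's variance bound (\ref{ineq-MS}) to be the ``essential'' ingredient, though the passage from that bound to control of the bilinear form is left implicit. Your approach bypasses all of this: you use only the Sobolev embedding $\mathcal H_s\hookrightarrow L^\infty$ to know $f,g$ are bounded continuous, approximate uniformly on $[-R,R]$ by Weierstrass, and control the error via the trace Cauchy--Schwarz inequality together with almost-sure confinement of the spectra. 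This is more elementary and, in fact, shows that the conclusion holds for any bounded continuous $f,g$, not just $\mathcal H_s$ functions. The paper's approach has the virtue of staying within the $\mathcal H_s$ framework used elsewhere in the proof of Theorem~\ref{thm-GaussCLT}, but for this proposition alone your route is cleaner.

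Two small points worth tightening. First, in the expectation argument you need $\E[T_n(P_N,Q_N)\mathbf 1_{\mathcal E_n^c}]\to0$; since $P_N,Q_N$ are unbounded polynomials this does not follow from your stated $L^\infty$ bound, but it does follow from Cauchy--Schwarz together with the uniform-in-$n$ boundedness of $\E[(\frac1n\Tr (M^{(l)})^{2k})^2]$ and the exponential smallness of $\P(\mathcal E_n^c)$. Second, your remark about $\beta=1$ is accurate: in that case the kernel $F_{lr}$ is only a formal object, but the bilinear form reduces to $\int fg\,d\mu_{sc,\gamma_l}$, and uniform convergence of $P_N\to f$, $Q_N\to g$ on $[-2\sqrt{\gamma_l},2\sqrt{\gamma_l}]$ again suffices.
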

The Proposition \ref{lem:exten} is proven in ther Appendix 3.
Lemma \ref{lem:diag} now follows from Propositions \ref{prop_diag} and \ref{lem:exten}. This also completes the proof of Theorem \ref{thm-GaussCLT}.
\section{Proof of Theorem \ref{thm-NG}}
It is enough to prove the case of $d=2$, i.e. the limiting covariance of $\mathcal{N}_n^{(1)\circ}[\varphi_{1}]$ and 
$\mathcal{N}_n^{(2)\circ}[\varphi_{2}]$. Let $U(t),\widetilde U(t), u_n(t),\widetilde u_n(t)$ be $U^{(1)}(t),U^{(2)}(t), u_n^{(1)}(t), u_n^{(2)}(t)$ 
defined in (\ref{ujk}-\ref{moskva}) respectively.
$U(t)$ and $\widetilde U(t)$ are unitrary matrices and
\[U(t)U^*(t)=\widetilde U(t)\widetilde U^*(t)=I,\ \ \ \ \ \ \ |U_{jk}|\leq 1,\ \ \ \ \ \ \ \sum_{k=1}^n|U_{jk}|^2=1.\]
By Remark 3.3 in \cite{cite_key3}, we have the following bounds 
\begin{eqnarray}
&&\mathbf{Var}\{u_n(t)\}\leq C(\sigma_6)(1+|t|^3)^2 ,\\
&&\mathbf{Var}\{\widetilde u_n(t)\}\leq C(\sigma_6)(1+|t|^3)^2 ,\\
&&\mathbf{Var}\{\mathcal N^{(1)\circ}_n(t)\}\leq C(\sigma_6)\left(\int_{-\infty}^{\infty}(1+|t|^3)|\widehat\varphi_1(t)|\ dt\right)^2 ,\label{var_N_1}\\
&&\mathbf{Var}\{\mathcal N_n^{(2)\circ}(t)\}\leq C(\sigma_6)\left(\int_{-\infty}^{\infty}(1+|t|^3)|\widehat\varphi_2(t)|\ dt\right)^2 .\label{var_N_2}
\end{eqnarray}
Let $w$ be a linear combination of random variables $\mathcal{N}_n^{(1)\circ}[\varphi_{1}]$ and $\mathcal{N}_n^{(2)\circ}[\varphi_{2}]$, and $Z_n(x)$ be the characteristic function of $w$, i.e.
\begin{equation}
w=\alpha\mathcal{N}_{n}^{(1)\circ}[\varphi_{1}]+\beta\mathcal{N}_{n}^{(2)\circ}[\varphi_{2}],\ \ Z_n(x)=\E\{e^{ixw}\}.
\end{equation} 
We note that 
\begin{equation}
Z_n(x)=1+\int_0^x Z_n'(t)dt;\  Z'_n(x)=i\E\{we^{ixw}\},
\end{equation}

By the Cauchy-Schwarz inequality and (\ref{var_N_1}-\ref{var_N_2}) we get
\begin{equation}
|Z_n'(x)|\leq(|\alpha|+|\beta|)C^{1/2}(\sigma_6)\int_{-\infty}^{\infty}(1+|t|^3)|\widehat\varphi_1(t)|\ dt,
\end{equation}

Using the Fourier inversion formula $f(\lambda)=\int e^{it\lambda}\widehat{f}(t)\ dt$ we obtain
\begin{equation}
\mathcal{N}_n^{(1)\circ}[\varphi_{1}]=\int_{-\infty}^{\infty}\widehat{\varphi_1}(t)u_n^{\circ}(t)dt,\ \ \ \mathcal{N}_n^{(2)\circ}[\varphi_{2}]=\int_{-\infty}^{\infty}\widehat{\varphi_2}(t)\widetilde{u}_n^{\circ}(t)dt.
\end{equation}
Therefore,
\begin{eqnarray}
&&w=\int_{-\infty}^{\infty}\alpha\widehat{\varphi_1}(t)u_n^{\circ}(t)+\beta \widehat{\varphi_2}(t)\widetilde{u}_n^{\circ}(t)dt,\\
&&Z'_n(x)=i\alpha\int_{-\infty}^{\infty}\widehat{\varphi_1}(t)Y_n(x,t)dt+i\beta\int_{-\infty}^{\infty}\widehat{\varphi_2}(t)\widetilde Y_n(x,t)dt,
\end{eqnarray}
where
\begin{equation}
Y_n(x,t)=\E\{u_n^{\circ}(t)e_n(x)\},\ \ \ \widetilde Y_n(x,t)=\E\{\widetilde u_n^{\circ}(t)e_n(x)\},\ \ \ e_n(x)=e^{ixw}.
\end{equation}
By the Cauchy-Schwarz inequality,
\begin{eqnarray}
|Y_n(x,t)|\leq \E\{|u_n^{\circ}(t)|\}\leq C^{1/2}(\sigma_6)(1+|t|^3),\label{boundY_n}\\
|\widetilde Y_n(x,t)|\leq \E\{|\widetilde u_n^{\circ}(t)|\}\leq C^{1/2}(\sigma_6)(1+|t|^3)\label{boundtildeY_n},
\end{eqnarray}
and
\begin{eqnarray}
|\frac{\partial}{\partial x}Y_n(x,t)|&=&|\E\{\alpha u_n^{\circ}\mathcal N_n^{(1)\circ}[\varphi_1]e_n(x)+\beta u_n^{\circ}\mathcal N_n^{(2)\circ}[\varphi_2]e_n(x)\}|\nonumber\\
&\leq& C(\sigma_6)(1+|t|^3)\int_{-\infty}^{\infty}(1+|t|^3)(|\alpha\widehat\varphi_1(t)|+|\beta\widehat\varphi_2(t)|)dt.
\end{eqnarray}
Also
\begin{equation}
\frac{\partial}{\partial t} Y_n(x,t)=\mathbb E\{u_n'(t)e_n^{\circ}(x)\}=\frac{i}{\sqrt n}\sum_{j,k\in B_1}\mathbb E\{W_{jk}\Phi_n\},
\end{equation}
where
\[\Phi_n=U_{jk}(t)e_n^{\circ}(x).\]
Recall that for $D_{jk}=\partial/\partial M_{jk},\ \ \beta_{jk}=(1+\delta_{jk})^{-1},$ 
\begin{eqnarray}
D_{jk}U_{ab}(t)=1_{j,k\in B_1}i\beta_{jk}[U_{aj}*U_{bk}(t)+U_{bj}*U_{ak}(t)]\label{DU},\\
D_{jk}\widetilde U_{ab}(t)=1_{j,k\in B_2}i\beta_{jk}[\widetilde U_{aj}*\widetilde U_{bk}(t)+\widetilde U_{bj}*\widetilde U_{ak}(t)],\label{DtildeU}
\end{eqnarray}
and
\begin{eqnarray}
D_{jk}e_n(x)&=&2i\beta_{jk}xe_n(x)(1_{j,k\in B_1}\alpha (\varphi_1)'_{jk}(M_1)+1_{j,k\in B_2}\beta (\varphi_2)'_{jk}(M_2))\\
&=&-2\beta_{jk}xe_n(x)\left(1_{j,k\in B_1}\int_{-\infty}^{\infty} t U_{jk}(t)\alpha\widehat{\varphi_1}(t)dt+1_{j,k\in B_2}\int_{-\infty}^{\infty} t\widetilde U_{jk}\beta\widehat{\varphi_2}(t)dt\right).
\label{Den}
\end{eqnarray}
\begin{lem}
\label{lemforDl}
Let $\varphi_1,\varphi_2$ have fourth bounded derivatives. Then
\begin{equation}
|D_{jk}^l(U_{jk}(t)e_n^{\circ}(x))|\leq C_l(x,t),\ \ \ 0\leq l\leq 5,
\end{equation}
where $C_l(x,t)$ is a degree $l$ polynomial of $|x|,|t|$ with positive coefficients. 
\end{lem}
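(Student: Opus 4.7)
My plan is to prove the lemma by induction on $l$, reducing via the Leibniz rule to two independent estimates. Writing
\begin{equation*}
D_{jk}^l\bigl(U_{jk}(t) e_n^\circ(x)\bigr) = \sum_{m=0}^l \binom{l}{m}\bigl(D_{jk}^m U_{jk}(t)\bigr)\bigl(D_{jk}^{l-m} e_n^\circ(x)\bigr),
\end{equation*}
and noting that $\E\{e_n(x)\}$ is non-random (so $D_{jk}^{j} e_n^\circ(x) = D_{jk}^{j} e_n(x)$ for $j \geq 1$, while $|e_n^\circ(x)| \leq 2$ handles $j = 0$), it suffices to establish two sub-claims: \emph{(A)} $|D_{jk}^m U_{jk}(t)| \leq c_m |t|^m$ for absolute constants $c_m$, and \emph{(B)} $|D_{jk}^m e_n(x)| \leq R_m(|x|)$ for polynomials $R_m$ of degree $m$ with positive coefficients depending only on $\alpha,\beta$ and on bounds involving $\varphi_1,\varphi_2$. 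The desired polynomial $C_l(x,t)$ is then the Leibniz sum $\sum_m \binom{l}{m} c_m |t|^m R_{l-m}(|x|)$, of total degree $l$ in $(|x|,|t|)$ with positive coefficients.

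For sub-claim (A), I would run an inner induction on $m$. The base case $|U_{jk}(t)| \leq 1$ is in (\ref{Uprop}). The key scaling observation is that if $|f(s)| \leq a|s|^j$ and $|g(s)| \leq b|s|^k$ on $[0,t]$, then $|(f*g)(t)| \leq ab\, B(j+1,k+1)\,|t|^{j+k+1}$, so convolution sends type-$j$ and type-$k$ functions to type-$(j+k+1)$. Since $D_{jk}$ commutes with integration in the convolution variable and (\ref{DU}) turns each matrix entry $U_{ab}(s)$ into a sum of two convolutions of entries, a product-rule argument shows that each application of $D_{jk}$ to an iterated convolution of $U$-entries produces another iterated convolution in which the total type increases by exactly one. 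A careful structural induction then yields that $D_{jk}^m U_{jk}(t)$ is a finite linear combination of iterated convolutions each of type $m$, with the number of terms bounded by a constant depending only on $m$.

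For sub-claim (B), apply the Fa\`a di Bruno formula to $e^{ixw}$, where $w = \alpha\mathcal{N}_n^{(1)\circ}[\varphi_1] + \beta\mathcal{N}_n^{(2)\circ}[\varphi_2]$, to get
\begin{equation*}
D_{jk}^m e_n(x) = e_n(x)\sum_{\pi} (ix)^{|\pi|} \prod_{B \in \pi} D_{jk}^{|B|} w,
\end{equation*}
the sum being over set partitions $\pi$ of $\{1,\dots,m\}$. Each factor $D_{jk}^{|B|} w$ with $|B| \geq 1$ is bounded uniformly in $M$: using the Fourier representation $\varphi_l'(M^{(l)})_{jk} = i\int t\,\widehat{\varphi_l}(t)\, U^{(l)}_{jk}(t)\,dt$ and differentiating under the integral sign gives
\begin{equation*}
D_{jk}^{|B|} w = 2i\beta_{jk}\sum_{l=1}^{2} c_l\, \mathbf{1}_{\{j,k\in B_l\}} \int t\,\widehat{\varphi_l}(t)\, D_{jk}^{|B|-1} U^{(l)}_{jk}(t)\,dt,
\end{equation*}
where $c_1 = \alpha$ and $c_2 = \beta$. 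By sub-claim (A), this is bounded by a constant multiple of $\int |t|^{|B|} |\widehat{\varphi_l}(t)|\,dt$, which is finite for $|B| \leq 5$ under the smoothness hypothesis on the $\varphi_l$. Combined with $|e_n(x)| \leq 1$ and $|\pi| \leq m$, summing over $\pi$ produces the desired polynomial in $|x|$ of degree $m$ with positive coefficients.

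The main obstacle is the combinatorial bookkeeping in sub-claim (A): one must precisely track how the growing family of nested convolution expressions transforms under $D_{jk}$, verifying both that the type increases by exactly one per application and that the number of produced terms stays bounded uniformly in $M$. Once this structural induction is set up carefully, sub-claim (B) follows with little additional work via the Fourier representation, and the final Leibniz step is routine.
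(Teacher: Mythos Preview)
Your proposal is correct and follows essentially the same route as the paper's own proof: split via Leibniz into separate bounds $|D_{jk}^m U_{ab}(t)|\le c_m|t|^m$ and $|D_{jk}^m e_n(x)|\le R_m(|x|)$, then combine. The paper's argument is only a two-line sketch citing (\ref{DU})--(\ref{Den}), whereas you supply the mechanics (the convolution-type induction for sub-claim (A) and Fa\`a di Bruno for sub-claim (B)); these are exactly the details one would fill in, so there is no substantive difference in approach.
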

\begin{proof}
From (\ref{DU}) and (\ref{DtildeU}), we have
\begin{equation}
|D_{jk}^lU_{ab}(t)|,|D_{jk}^l\widetilde U_{ab}(t)|\leq Const_l|t|^l, \ \ \ 0\leq l\leq 5.
\end{equation}
(\ref{Den}) implies
\begin{equation}
|D_{jk}^le_n(x)|\leq Const'_l(1+|x|^l) \ \ \ 0\leq l\leq 5.
\end{equation}
These two inequalities complete the proof of Lemma \ref{lemforDl}
\end{proof}
We now apply the Decoupling Formula (\ref{decf}) with $p=2$ to obtain
\begin{eqnarray}
\frac{\partial}{\partial t} Y_n(x,t)&=&\frac{i}{ n}\sum_{j,k\in B_1} (1+(\sigma^2-1)\delta_{jk})\E\{D_{jk}\Phi_n\}+O(1)\nonumber\\
&=&\frac{i}{n}\sum_{j,k\in B_1} (1+\delta_{jk})\E\{D_{jk}\Phi_n\}+\frac{i(\sigma^2-2)}{n}\sum_{j\in B_1} \E\{D_{jj}\Phi_n\}+O(1).\label{partialtY_n}
\end{eqnarray}
where the error term is bounded by $C_3(x,t)$ as $n\to\infty$.
The first term in (\ref{partialtY_n}) is
\begin{eqnarray*}
&&-\frac{t}{n}Y_n(t,x)-\frac{1}{n}\int_0^t\E\{u_n(t-t_1)\}Y_n(x,t_1)dt_1-\frac{1}{n}\E\{\int_0^tu_n(t_1)u^{\circ}_n(t-t_1)dt_1e_n^{\circ}(x)\}\\
&&-\frac{2i}{n}\E\{xe_n(x)\left(\int_{-\infty}^{\infty} t_1 u_n(t+t_1)\alpha\widehat{\varphi_1}(t_1)dt_1+\int_{-\infty}^{\infty} t_1TrP^{(1,2)}U_n(t)P^{(1,2)}\widetilde U_{n}(t_1)\beta\widehat{\varphi_2}(t_1)dt_1\right).
\end{eqnarray*}
The first term and the second term are bounded because of (\ref{boundY_n}). The last term is bounded by 
$$2|x|\int_{-\infty}^{\infty}|t|(|\alpha||\widehat\varphi_1(t_1)|+|\beta||\widehat\varphi_2(t_1)|)dt_1,$$
and the third term is bounded by $2|t|C^{1/2}(\sigma_6)(1+|t|^3)$.

The second term in (\ref{partialtY_n}) is
\begin{eqnarray*}
&&\frac{2-\sigma^2}{n}\sum_{j\in B_1} \E\left\{\int_0^tU_{jj}(t_1)U_{jj}(t-t_1)dt_1e_n^{\circ}(x)\right\}+\frac{ix(2-\sigma^2)}{n}\sum_{j\in B_1} \E\left\{e_n(x)\int_{-\infty}^{\infty} t_1 U_{jj}(t)U_{jj}(t_1)\alpha\widehat{\varphi_1}(t_1)dt_1\right\}\\
&&+\frac{ix(2-\sigma^2)}{n}\sum_{j\in B_1\cap B_2}\E\left\{e_n(x)\int_{-\infty}^{\infty} t_1U_{jj}(t)\widetilde U_{jj}(t_1)\beta\widehat{\varphi_2}(t_1)dt_1\right\}
\end{eqnarray*}
The first term is bounded by $2|2-\sigma^2||t|$, and the second term is bounded by 
$$2|x||2-\sigma^2|\int_{-\infty}^{\infty}|t|(|\alpha||\widehat\varphi_1(t_1)|+|\beta||\widehat\varphi_2(t_1)|)dt_1.$$ So 
\[\left|\frac{\partial}{\partial t} Y_n(x,t)\right|\leq C_5(x,t).\]
By symmetry, $\widetilde Y_n(x,t)$ has similar bounds. Therefore,
we conclude that the sequences $\{Y_n\},\{\widetilde Y_n\}$ are bounded and equicontinuous on any finite subset of $\mathbb R^2$. 
We will prove now that any uniformly converging subsequence of $\{Y_n\}(\{\widetilde Y_n\})$ has same limit $Y(\widetilde Y)$.

We deal with $Y_n$ first, and by the symmetric property, we can find $\widetilde Y_n$. We use the identity
\begin{equation}
u_n(t)=n_1+i\int_0^t \sum_{j,k\in B_1}M_{jk}U_{jk}(t_1)dt_1,
\end{equation}
to write
\begin{equation}
\label{Yn}
Y_n(x,t)=\frac{i}{\sqrt {n}}\int_0^t\sum_{j,k\in B_1}\mathbb E\{W_{jk} U_{jk}(t_1)e_n^{\circ}(x)\}dt_1.
\end{equation}

By applying decoupling formula (\ref{decf}) with $p=3$ to (\ref{Yn}), we have
\begin{equation}
Y_n(x,t)=\frac{i}{\sqrt {n}}\int_0^t\sum_{j,k\in B_1}\left[\sum_{l=0}^3\frac{\kappa_{l+1,jk}}{n^{l/2}l!}
\mathbb E\{D^l_{jk}(U_{jk}(t_1)e_n^{\circ}(x))\}+\varepsilon_{3,jk}\right]dt_1,
\end{equation}
where 
\begin{equation}
\kappa_{1,jk}=0,\kappa_{2,jk}=1+\delta_{jk}(\sigma^2-1),
\end{equation}
\begin{equation}
\kappa_{3,jk}=\mu_3,\kappa_{4,jk}=\kappa_4, j\neq k, 
\end{equation}
and $\kappa_{3,jj},\kappa_{4,jj}$ are uniformly bounded, i.e. there exist constants $\sigma_3,\sigma_4$ such that
\begin{equation}
\label{kappa34}
|\kappa_{3,jj}|\leq\sigma_3, |\kappa_{4,jj}|\leq\sigma_4 ,
\end{equation}
and
\begin{equation}
|\varepsilon_{3,jk}|\leq n^{-2}C_3\E\{|W_{jk}|^5\}\sup_{t\in\mathbb R}\left|D_{jk}^4\Phi_n(x)\right|\leq n^{-2}C_4(x,t).
\end{equation}

Let
\begin{eqnarray}
&&T_l=\frac{i}{n^{(l+1)/2}}\int_0^t\sum_{j,k\in B_1}\frac{\kappa_{l+1,jk}}{l!}\mathbb E\{D^l_{jk}(U_{jk}(t_1)e_n^{\circ}(x))\}dt_1,l=1,2,3,\\
&&\mathcal E_n=\frac{i}{\sqrt n}\int_0^t\sum_{j,k\in B_1}\varepsilon_{3,jk}dt.
\end{eqnarray}
Then
\[Y_n(x,t)=T_1+T_2+T_3+\mathcal E_n,\]
and
\[|\mathcal E_n|\leq\frac{n_1^2}{n^{5/2}}C_5(x,t)\to0,\ \ \ as\ n\to\infty.\]

We note that if $W_{jk}$'s are Gaussian, then $Y_n(x,t)=T_1$. Thus, $T_1$ coincide with the $Y_n$ in Theorem \ref{thm-GaussCLT}.

Let 
\[\bar{v}_n(t)=n^{-1}\E\{u_n(t)\},\ \ \ \bar{\widetilde v}_n(t)=n^{-1}\E\{\widetilde u_n(t)\}.\]
Then
\begin{equation}
Y_n(x,t)+2\int_0^tdt_1\int_0^{t_1}\bar{v}_n(t_1-t_2)Y_n(x,t_2)dt_2=xZ_n(x)A_n(t)+r_n(x,t)+T_2+T_3+\mathcal E_n,
\end{equation}
where 
\begin{eqnarray}
A_n(t)=-\frac{2\alpha}{n}\int_0^t\mathbb E\{TrU(t_1)P_1\varphi_1'(M_1)P_1\}dt_1-\frac{2\beta}{n}\int_0^t\mathbb E\{TrU(t_1)P_2\varphi_2'(M_2)P_{2}\}dt_1,
\end{eqnarray}
and $r_n(x,t)\to0$ on any bounded subset of $\{(x,t):x\in \R,t>0\}$.

Let $A(t)=\lim_{n\to\infty}A_n(t)$. It follows from the proof of Theorem \ref{thm-GaussCLT} that $A(t)$ coincides with the one established in the 
Gaussian case. 

\begin{prop} 
\label{prop_T2}
$T_2\to0$ on any bounded subset of $\{(x,t):x\in\R,t>0\}$.
\end{prop}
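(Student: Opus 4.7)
The plan is to expand $D_{jk}^{2}(U_{jk}(t_{1})e_{n}^{\circ}(x))$ via the Leibniz rule into three summands and show that each contributes $o(1)$ to $T_{2}$, uniformly on bounded subsets of $\{(x,t):x\in\R,\ t>0\}$. Concretely I would write $T_{2}=T_{2a}+T_{2b}+T_{2c}$, where the three pieces correspond respectively to the product-rule terms $(D_{jk}^{2}U_{jk})e_{n}^{\circ}$, $2(D_{jk}U_{jk})(D_{jk}e_{n})$, and $U_{jk}(D_{jk}^{2}e_{n})$.

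Two universal tools drive every estimate. First, the explicit formulas (\ref{DU})--(\ref{DtildeU}) and (\ref{Den}) express every $D_{jk}$-derivative of $U_{jk}$, $\widetilde U_{jk}$, or $e_{n}$ as a combination of iterated convolutions of unitary-matrix entries, $\beta_{jk}$ factors, and Fourier integrals against $\widehat{\varphi}_{r}$. Second, after summation over $(j,k)\in B_{1}\times B_{1}$, every resulting double sum is either bilinear in the diagonal vector,
\begin{equation*}
\sum_{j,k\in B_{1}}U_{jj}(s_{1})\,U_{jk}(s_{2})\,U_{kk}(s_{3})=d(s_{1})^{\top}U(s_{2})\,d(s_{3}),\qquad d(s):=\bigl(U_{jj}(s)\bigr)_{j\in B_{1}},
\end{equation*}
or a pure off-diagonal triple sum such as $\sum_{j,k\in B_{1}}|U_{jk}(s_{1})U_{jk}(s_{2})|$. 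Both are bounded by $n$: the bilinear one via $\|U(s)\|=1$ combined with $\|d(s)\|^{2}\le\sum_{j,k}|U_{jk}(s)|^{2}=n$, and the off-diagonal one via a single Cauchy--Schwarz using $\sum_{k}|U_{jk}|^{2}=1$.

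Executing the three pieces: for $T_{2a}$ a direct computation from (\ref{DU}) gives, when $j\neq k$,
\begin{equation*}
D_{jk}^{2}U_{jk}(t_{1})=-6\,U_{jj}*U_{jk}*U_{kk}(t_{1})-2\,U_{jk}^{*3}(t_{1}),
\end{equation*}
whose summation over $(j,k)$ yields a quantity of modulus $O(n\,t_{1}^{2})$ after integration over the simplex of convolution parameters. The $j=k$ contribution is trivially $O(n\,t_{1}^{2})$ since it has only $n$ terms bounded pointwise by $O(t_{1}^{2})$. Combined with $|\kappa_{3,jk}|\le\max(\mu_{3},\sigma_{3})$, the elementary bound $|e_{n}^{\circ}|\le 2$, and the prefactor $1/n^{3/2}$, this gives $|T_{2a}|\le C\,t^{3}/\sqrt{n}$. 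For $T_{2b}$ and $T_{2c}$ I would substitute the Fourier representation (\ref{libby5}) for $\varphi_{r}'$ inside (\ref{Den}) (and inside its own $D_{jk}$-derivative for $T_{2c}$) to convert every term into an integral of a double sum of three unitary entries; each such sum is again $O(n)$ by the two tools above. Using $|e_{n}|\le 1$ and the finiteness of $\int(1+t^{2})|\widehat{\varphi}_{r}(t)|\,dt$ (a consequence of $\|\varphi_{r}\|_{s}<\infty$ for $s>5.5$, compare (\ref{golo3})) to commute absolute values through the Fourier integrals gives $|T_{2b}|+|T_{2c}|\le C(x,t)/\sqrt{n}$ with $C(x,t)$ polynomial in $|x|$ and $t$.

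The hard part — and the reason one cannot proceed term by term — is that the naive estimate $|e_{n}^{\circ}(x)|\le 2$ applied at each $(j,k)$ yields only $O(n^{2}/n^{3/2})=O(\sqrt{n})$, which diverges. The required cancellation must be extracted by performing the $(j,k)$-summation \emph{before} taking absolute values, so that the bilinear/trace structure becomes visible and the stronger unitarity $\|U(s)\|=1$ (rather than the pointwise $|U_{jk}|\le 1$) can be exploited to reduce the effective summation size from $n^{2}$ to $n$. Once this reduction is in place, the $1/n^{3/2}$ prefactor delivers the desired $o(1)$ bound, uniform on compact sets.
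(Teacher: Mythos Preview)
Your proposal is correct and follows essentially the same approach as the paper. The paper organizes the terms slightly differently---splitting $T_{2}$ into $T_{21}$ (all factors from $U$ only), $T_{22}$ (terms involving $\widetilde U$, hence summed over $j,k\in B_{1}\cap B_{2}$), and $T_{23}$ (the diagonal $j=k$ contribution)---whereas you split via the Leibniz rule into $T_{2a},T_{2b},T_{2c}$; but the decisive estimates are identical: the bilinear form $d(s_{1})^{\top}U(s_{2})d(s_{3})=O(n)$ and the Cauchy--Schwarz bound $\sum_{j,k}|U_{jk}(s_{1})U_{jk}(s_{2})|\le n$, combined with the $n^{-3/2}$ prefactor, are exactly what the paper invokes (its sums $S_{1},S_{2},S_{3}$ in the proof of Proposition~\ref{prop_T2} are precisely your two ``universal tools'').
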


\begin{proof}
The second derivative (l=2) is
\begin{eqnarray*}
\lefteqn{D^2_{jk}(U_{jk}(t_1)e_n^{\circ}(x))=\beta_{jk}^2\times}\\
&&\{-(6U_{jj}*U_{jk}*U_{kk}+2U_{jk}*U_{jk}*U_{jk})(t_1)e_n^{\circ}(x)\\
&&-4i(U_{jj}*U_{kk}+U_{jk}*U_{jk})(t_1)xe_n(x)\left[\int_{-\infty}^{\infty} t U_{jk}(t)\alpha\widehat{\varphi_1}(t)dt+1_{j,k\in B_2}\int_{-\infty}^{\infty} t\widetilde U_{jk}\beta\widehat{\varphi_2}(t)dt\right]\\
&&+4U_{jk}(t_1)x^2e_n(x)\left[\int_{-\infty}^{\infty} t U_{jk}(t)\alpha\widehat{\varphi_1}(t)dt+1_{j,k\in B_2}\int_{-\infty}^{\infty} t\widetilde U_{jk}\beta\widehat{\varphi_2}(t)dt\right]^2\\
&&-2iU_{jk}(t_1)xe_n(x)\left[\int_{-\infty}^{\infty} t(U_{jj}*U_{kk}+U_{jk}*U_{jk})(t)\alpha\widehat{\varphi_1}(t)dt\right.\\
&&\left.+1_{j,k\in B_2}\int_{-\infty}^{\infty} t(\widetilde U_{jj}*\widetilde U_{kk}+\widetilde U_{jk}*\widetilde U_{jk})(t)\beta\widehat {\varphi_2}(t)dt\right]\}.
\end{eqnarray*}
Let
\begin{eqnarray*}
T_{21}&=&\frac{i\kappa_3}{2n^{3/2}}\int_0^t\E\left\{\sum_{j,k\in B_1}-\beta_{jk}^2(6U_{jj}*U_{jk}*U_{kk}+2U_{jk}*U_{jk}*U_{jk})(t_1)e_n^{\circ}(x)\right.
\\&&-4i\beta^2_{jk}(U_{jj}*U_{kk}+U_{jk}*U_{jk})(t_1)xe_n(x)\int t_2 U_{jk}(t_2)\alpha\widehat{\varphi_1}(t_2)dt_2
\\&&+4\beta^2_{jk}U_{jk}(t_1)x^2e_n(x)(\int t_2 U_{jk}(t_2)\alpha\widehat{\varphi_1}(t_2)dt_2)^2
\\&&\left.-2i\beta_{jk}^2U_{jk}(t_1)xe_n(x)\int t_2(U_{jj}*U_{kk}+U_{jk}*U_{jk})(t_2)\beta\widehat{\varphi_1}(t_2)dt_2\right\}dt_1,
\end{eqnarray*}
\begin{eqnarray*}
T_{22}&=&\frac{i\kappa_3}{2n^{3/2}}\int_0^t\E\left\{\sum_{j,k\in B_1\cap B_2}4\beta^2_{jk}U_{jk}(t_1)x^2e_n(x)(\int t_2\widetilde U_{jk}(t_2)\beta\widehat{\varphi_2}(t_2)dt_2)^2\right.
\\&&+8\beta^2_{jk}U_{jk}(t_1)x^2e_n(x)\int t_2 U_{jk}(t_2)\alpha\widehat{\varphi_1}(t_2)dt_2\int t_3\widetilde U_{jk}(t_3)\beta\widehat{\varphi_2}(t_3)dt_3
\\&&\left.-2i\beta_{jk}^2U_{jk}(t_1)xe_n(x)\int t_2[\widetilde U_{jj}*\widetilde U_{kk}+
\widetilde U_{jk}*\widetilde U_{jk}](t_2)\beta\widehat {\varphi_2} (t_2)dt_2\right\}dt_1,
\end{eqnarray*}
\begin{eqnarray*}
T_{23}&=&\frac{i}{2n^{3/2}}\int_0^t\sum_{j\in B_1}\kappa_{3,jj}\mathbb E\{D^2_{jj}(U_{jj}(t_1)e_n^{\circ}(x))\}dt_1.
\end{eqnarray*}
Then $T_2=T_{21}+T_{22}+T_{23}$.
It has been shown in \cite{cite_key3} that $|T_{21}|\leq |t|C_2(x,t)n_1/n^{3/2}$ on any bounded subset of $\{(x,t):x\in\R,t>0\}$. 
Also, by Proposition \ref{lemforDl} and (\ref{kappa34}), one has $|T_{23}|\leq |t|C_2(x,t)n_1/n^{3/2}$. 

In $T_{22}$, there are three types of a sum,
\begin{eqnarray*}
S_1=n^{-3/2}\sum_{j,k\in B_1\cap B_2}U_{jk}(t_1)\widetilde U_{jk}(t_2)\widetilde U_{jk}(t_3),\\
S_2=n^{-3/2}\sum_{j,k\in B_1\cap B_2}U_{jk}(t_1)U_{jk}(t_2)\widetilde U_{jk}(t_3),\\
S_3=n^{-3/2}\sum_{j,k\in B_1\cap B_2}U_{jk}(t_1)\widetilde U_{jj}(t_2)\widetilde U_{kk}(t_3).
\end{eqnarray*}
Applying the Cauchy-Schwarz inequality we obtain
\begin{eqnarray*}
|S_1|\leq n^{-3/2}\sum_{j,k\in B_2}|\widetilde U_{jk}(t_2)\widetilde U_{jk}(t_3)|\leq \frac{n_2}{n^{3/2}},\\
|S_2|\leq n^{-3/2}\sum_{j,k\in B_1}|U_{jk}(t_1) U_{jk}(t_2)|\leq \frac{n_1}{n^{3/2}}.
\end{eqnarray*}
Writing
\[S_3=\frac{n_{12}}{n^{3/2}}(P_{12}U(t_1)P_{12}V(t_2),V(t_3)),\]
where
\[V(t)=n_{12}^{-1/2}(\widetilde U_{jj}(t))_{j\in B_1\cap B_2}^T.\]
$\|V(t)\|\leq 1$, $\|P_{12}U(t)P_{12}\|\leq1$, we conclude that $S_{3}\leq\frac{n_{12}}{n^{3/2}}$, hence $T_{22}\leq |t|/n^{3/2}$. 
This completes the proof of Proposition \ref{prop_T2}.
\end{proof}

\begin{prop} 
\[T_3=T_{31}+T_{32}+R_3(x,t),\]
where
\begin{eqnarray*}
T_{31}=\frac{i\kappa_4}{n^2}\int_0^t\sum_{j,k\in B_1}\mathbb E\left\{U_{jj}*U_{kk}(t_1)xe_n(x)\int t_2 U_{jj}*U_{kk}(t_2)\alpha\widehat{\varphi_1}(t_2)dt_2\right\}dt_1,\\
T_{32}=\frac{i\kappa_4}{n^2}\int_0^t\sum_{j,k\in B_1\cap B_2}\mathbb E\left\{U_{jj}*U_{kk}(t_1)xe_n(x)\int t_2\widetilde U_{jj}*\widetilde U_{kk}(t_2)\beta\widehat{\varphi_2}(t_2)dt_2\right\}dt_1.
\end{eqnarray*}
and 
$R_3(x,t)\to 0$ on any bounded subset of $\{(x,t):x\in\R,t>0\}$.
\end{prop}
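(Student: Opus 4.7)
The plan is to expand $D^3_{jk}\bigl(U_{jk}(t_1)e_n^\circ(x)\bigr)$ by the Leibniz rule, isolate the unique bilinear pattern whose contribution survives the $n^{-2}\sum_{j,k\in B_1}$ normalization (producing exactly $T_{31}+T_{32}$), and verify that every remaining term vanishes uniformly on bounded subsets of $\{(x,t):x\in\R,t>0\}$. Since the centering does not depend on $M_{jk}$, we have
\[D^3_{jk}\bigl(U_{jk}(t_1)e_n^\circ(x)\bigr) = \sum_{\ell=0}^{3}\binom{3}{\ell}\,D^{3-\ell}_{jk}\bigl(U_{jk}(t_1)\bigr)\cdot D^\ell_{jk}\bigl(e_n(x)\bigr),\]
with $D^0_{jk}(e_n)$ read as $e_n^\circ(x)$. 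By (\ref{DU}), each iterated derivative of $U_{jk}(t_1)$ is a sum of time-convolutions of entries $U_{ab}$ with indices in $\{j,k\}$, splitting into a \emph{purely diagonal} piece whose indices lie in $\{jj,kk\}$ only and a \emph{mixed} piece containing at least one $U_{jk}$-entry; the same dichotomy holds when $D_{jk}$ acts on the $U_{jk}(t)$ and $\widetilde U_{jk}(t)$ factors inside (\ref{Den}), and each differentiation of $e_n$ also contributes one power of $x$.

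The terms $T_{31}$ and $T_{32}$ arise from the $\ell=2$ summand $3\,D_{jk}(U_{jk}(t_1))\cdot D^2_{jk}(e_n)$. A product-rule calculation on (\ref{Den}), using (\ref{DU}) inside the time integrals, shows that the part of $D^2_{jk}(e_n)$ proportional to $x\,e_n(x)$ (as opposed to $x^2 e_n(x)$) equals
\[-2i\beta_{jk}^2\,x\,e_n(x)\bigl[\mathbf{1}_{\{j,k\in B_1\}}\mathcal{I}_1(t)+\mathbf{1}_{\{j,k\in B_2\}}\mathcal{I}_2(t)\bigr],\]
where $\mathcal{I}_1(t)=\int t_2\bigl[(U_{jj}*U_{kk})(t_2)+(U_{jk}*U_{jk})(t_2)\bigr]\alpha\widehat{\varphi_1}(t_2)\,dt_2$ and $\mathcal{I}_2(t)$ is the analogous expression in $\widetilde U$ and $\beta\widehat{\varphi_2}$. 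Pairing the purely diagonal piece $i\beta_{jk}(U_{jj}*U_{kk})(t_1)$ of $D_{jk}(U_{jk}(t_1))$ with the $(U_{jj}*U_{kk})(t_2)$ and $(\widetilde U_{jj}*\widetilde U_{kk})(t_2)$ pieces inside $\mathcal{I}_1,\mathcal{I}_2$, multiplying by the Leibniz coefficient $3$ and the $T_3$-prefactor $i\kappa_{4,jk}/(6n^2)$, and using $\kappa_{4,jk}=\kappa_4$ and $\beta_{jk}=1$ for $j\neq k$, yields exactly $T_{31}+T_{32}$. The difference between summing over all $j,k\in B_1$ versus $j\neq k$ amounts to $|B_1|=O(n)$ diagonal terms against an $n^{-2}$ prefactor, hence $O(n^{-1})$, which is absorbed into $R_3$.

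Every other term from the Leibniz expansion contributes to $R_3$ and is controlled by one of: (a) terms containing at least one mixed convolution $(U_{jk}*U_{jk})(s)$ or $(\widetilde U_{jk}*\widetilde U_{jk})(s)$, handled by one Cauchy--Schwarz in a summation index together with the row bound $\sum_k|U_{jk}(s)|^2\le 1$ (in the spirit of the $S_1,S_2,S_3$ estimates in the proof of Proposition \ref{prop_T2}), which reduces the double sum to size $O(n)$ and so to $O(n^{-1})$ after the $n^{-2}$ prefactor; (b) the $\ell=0,1,3$ summands, each of which either carries an extra power $x^2$ or $x^3$ together with at least one mixed convolution handled by (a), or else leaves $U_{jk}(t_1)$ bare and reduces by $|U_{jk}(t_1)|\le 1$ followed by the same row-sum estimate; (c) diagonal $j=k$ contributions, which are at most $|B_1|$ in number and uniformly controlled by Lemma \ref{lemforDl} together with (\ref{kappa34}), giving $O(n^{-1})$. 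Summing (a)--(c) gives $R_3(x,t)\to 0$ uniformly on compact sets.

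The main obstacle is the disciplined enumeration in the last step: the only way to extract a non-vanishing contribution from an $n^{-2}\sum_{j,k\in B_1}$ double sum of uniformly bounded matrix entries is to pair two purely diagonal factors $(U_{jj}*U_{kk})$, one supplied by $D_{jk}(U_{jk}(t_1))$ and the other by the $(xe_n)$-part of $D^2_{jk}(e_n)$, so that the two index sums decouple and each contributes a factor of order $n$. Every Leibniz term outside this specific pairing violates the structural requirement, and a case-by-case inspection using (a)--(c) shows that the cumulative error is precisely the claimed $R_3(x,t)\to 0$ on any bounded subset of $\{(x,t):x\in\R,\,t>0\}$.
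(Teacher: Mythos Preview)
Your approach mirrors the paper's: both expand $D^3_{jk}(U_{jk}(t_1)e_n^\circ(x))$, separate diagonal from off-diagonal contributions, and dispatch the latter by Cauchy--Schwarz and the row-sum bound $\sum_k|U_{jk}|^2\le 1$. Your Leibniz bookkeeping is in fact cleaner than the paper's explicit (and long) third-derivative formula.

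There is, however, a genuine gap in your case (b). The $\ell=0$ summand is $D^3_{jk}(U_{jk}(t_1))\cdot e_n^\circ(x)$, and $D^3_{jk}(U_{jk})$ contains a \emph{purely diagonal} piece $-6i\beta_{jk}^3(U_{jj}*U_{jj}*U_{kk}*U_{kk})(t_1)$ (check: differentiating the middle factor in $U_{jj}*U_{jk}*U_{kk}$ produces $U_{jj}*U_{jj}*U_{kk}*U_{kk}$). This falls into neither of the two alternatives you list for (b): it carries no power of $x$, has no mixed convolution, and does not leave $U_{jk}(t_1)$ bare. The crude bound $|U_{jj}|\le 1$ gives only $n^{-2}\cdot n_1^2=O(1)$, so your mechanism does not show it vanishes.

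The fix uses the centering $e_n^\circ$: by commutativity of convolution,
\[
\frac{1}{n^2}\sum_{j,k\in B_1}(U_{jj}*U_{jj}*U_{kk}*U_{kk})(t_1)=\int_0^{t_1}G_n(s)G_n(t_1-s)\,ds,\qquad G_n(s)=\frac{1}{n}\sum_{j\in B_1}(U_{jj}*U_{jj})(s),
\]
and a Poincar\'e/variance estimate (of the same type as for $v_n(t_1,t_2)$ later in the section, or as in \cite{cite_key3}) gives $\mathbf{Var}\{G_n(s)\}\to 0$ uniformly on compacts; hence the whole expression concentrates and its expectation against $e_n^\circ(x)$ is $o(1)$. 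The paper's own proof is equally silent on this term---it asserts that only ``two terms containing diagonal entries'' survive when in fact there are three---so the omission is understandable, but your explicit dichotomy in (b) makes the lacuna visible and should be repaired.
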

\begin{proof}
\[T_3=\frac{i\kappa_4}{6n^{2}}\int_0^t\sum_{j,k\in B_1}\mathbb E\{D^3_{jk}(U_{jk}(t_1)e_n^{\circ}(x))\}dt_1+\widetilde T_3,\]
where
\[\widetilde T_3=\frac{i}{6n^{2}}\int_0^t\sum_{j\in B_1}(\kappa_{4,jj}-\kappa_4)\mathbb E\{D^3_{jj}(U_{jj}(t_1)e_n^{\circ}(x))\}dt_1.\]
By Proposition \ref{lemforDl} and (\ref{kappa34}), we have $|\widetilde T_3|\leq |t|C_3(x,t)n_1/n^2$.

The third derivative (l=3)
\begin{eqnarray}
\lefteqn{D^3_{jk}(U_{jk}(t_1)e_n^{\circ}(x))=\beta_{jk}^3\times}\nonumber\\
&&\{-i(36U_{jj}*U_{jk}*U_{jk}*U_{kk}+6U_{jj}*U_{jj}*U_{kk}*U_{kk}+6U_{jk}*U_{jk}*U_{jk}*U_{jk})(t_1)e_n^{\circ}(x)\nonumber\\
&&+6(6U_{jj}*U_{jk}*U_{kk}+2U_{jk}U_{jk}*U_{jk})(t_1)xe_n(x)\left(\int t U_{jk}(t)\alpha\widehat{\varphi_1}(t)dt\right.\nonumber\\
&&\left.+1_{j,k\in B_2}\int t\widetilde U_{jk}\beta\widehat{\varphi_2}(t)dt\right)\nonumber\\
&&+12i(U_{jj}*U_{kk}+U_{jk}*U_{jk})(t_1)x^2e_n(x)\left(\int t U_{jk}(t)\alpha\widehat{\varphi_1}(t)dt+1_{j,k\in B_2}\int t\widetilde U_{jk}\beta\widehat{\varphi_2}(t)dt\right)^2\nonumber\\
&&+6(U_{jj}*U_{kk}+U_{jk}*U_{jk})(t_1)xe_n(x)\left(\int t (U_{jj}*U_{kk}+U_{jk}*U_{jk})(t)\alpha\widehat{\varphi_1}(t)dt\right.\nonumber\\
&&\left.+1_{j,k\in B_2}\int t(\widetilde U_{jj}*\widetilde U_{kk}+ \widetilde U_{jk}*\widetilde U_{jk})\beta\widehat{\varphi_2}(t)dt\right)\nonumber\\
&&-8U_{jk}(t_1)x^3e_n(x)\left(\int t U_{jk}(t)\alpha\widehat{\varphi_1}(t)dt+1_{j,k\in B_2}\int t\widetilde U_{jk}\beta\widehat{\varphi_2}(t)dt\right)^3\nonumber\\
&&+12iU_{jk}(t_1)x^2e_n(x)\left(\int t U_{jk}(t)\alpha\widehat{\varphi_1}(t)dt+1_{j,k\in B_2}\int t\widetilde U_{jk}\beta\widehat{\varphi_2}(t)dt\right)\nonumber\\
&&\times\left(\int t (U_{jj}*U_{kk}+U_{jk}*U_{jk})(t)\alpha\widehat{\varphi_1}(t)dt+1_{j,k\in B_2}\int t(\widetilde U_{jj}*\widetilde U_{kk}+\widetilde U_{jk}*\widetilde U_{jk})\beta\widehat{\varphi_2}(t)dt\right)\nonumber\\
&&+2U_{jk}(t_1)xe_n(x)\left[\int t(6U_{jj}*U_{jk}*U_{kk}+2U_{jk}*U_{jk}*U_{jk})(t)\alpha\widehat{\varphi_1}(t)dt\right.\nonumber\\
&&\left.+1_{j,k\in B_2}\int t(6\widetilde U_{jj}*\widetilde U_{jk}*\widetilde U_{kk}+2\widetilde U_{jk}*\widetilde U_{jk}*\widetilde U_{jk})(t)\beta\widehat {\varphi_2}(t)dt\right]\}\label{thirdD}.
\end{eqnarray}
So any term of
\[\frac{i\kappa_4}{6n^{2}}\int_0^t\sum_{j,k\in B_1}\mathbb E\{D^3_{jk}(U_{jk}(t_1)e_n^{\circ}(x))\}dt_1\]
containing at least one off-diagonal entry $U_{jk}$ or $\widetilde U_{jk}$ is bounded by $C_3(x,t)n_1/n^2$. Let $R_3(x,t)$ be the sum of $\widetilde T_3$ and these terms. Then $|R_3(x,t)|\leq C_3(x,t)n_1/n^2+|t|C_3(x,t)n_1/n^2$.
So two terms in (\ref{thirdD}) containg diagonal entries of $U$ and $\widetilde U$ only left contribute to $T_3$. They are
$T_{31}$ and $T_{32}$. 
\end{proof}
Let
\[v(t)=\frac{1}{2\pi\gamma_1}\int_{-2\sqrt{\gamma_1}}^{2\sqrt{\gamma_1}}e^{it\lambda}\sqrt{4\gamma_1-\lambda^2}d\lambda,\ \  \widetilde v(t)=\frac{1}{2\pi\gamma_2}\int_{-2\sqrt{\gamma_2}}^{2\sqrt{\gamma_2}}e^{it\lambda}\sqrt{4\gamma_2-\lambda^2}d\lambda.\]
By Wigner semicircle law, one has
\[\lim_{n\to\infty}\bar v_n(t)=\gamma_1v(t),\ \  \ \lim_{n\to\infty}\bar {\widetilde v}_n(t)=\gamma_2\widetilde v(t).\]
Then
\begin{equation}
(v*v)(t)=-\frac{i}{2\pi\gamma_1^2}\int_{-2\sqrt{\gamma_1}}^{2\sqrt{\gamma_1}}e^{it\mu}\mu\sqrt{4\gamma_1-\mu^2}d\mu=\frac{1}{\pi t\gamma_1^2}\int_{-2\sqrt{\gamma_1}}^{2\sqrt{\gamma_1}}e^{it\mu}\frac{2\gamma_1-\mu^2}{\sqrt{4\gamma_1-\mu^2}}d\mu,
\end{equation}
\begin{equation}
(\widetilde v*\widetilde v)(t)=-\frac{i}{2\pi\gamma_2^2}\int_{-2\sqrt{\gamma_2}}^{2\sqrt{\gamma_2}}e^{it\mu}\mu\sqrt{4\gamma_2-\mu^2}d\mu=\frac{1}{\pi t\gamma_2^2}\int_{-2\sqrt{\gamma_2}}^{2\sqrt{\gamma_2}}e^{it\mu}\frac{2\gamma_2-\mu^2}{\sqrt{4\gamma_2-\mu^2}}d\mu.
\end{equation}
Let 
\begin{equation}
I(t)=\int_0^t(v*v)(t_1)dt_1,\ \ \widetilde I(t)=\int_0^t(\widetilde v*\widetilde v)(t_1)dt_1.
\end{equation}
Denote
\begin{equation}
B_{\varphi_l}=\frac{1}{\pi \gamma_l^2}\int_{-2\sqrt{\gamma_l}}^{2\sqrt{\gamma_l}}\varphi_l(\mu)\frac{2\gamma_l-\mu^2}{\sqrt{4\gamma_l-\mu^2}}d\mu,l=1,2.
\end{equation}
\begin{prop}
\begin{eqnarray}
T_{31}\to i\kappa_4xZ(x)I(t)\alpha  \gamma_1^2B_{\varphi_1},
\label{T_31}\\
T_{32}\to i\kappa_4xZ(x)I(t)\beta\gamma_{12}^2 B_{\varphi_2}.
\label{T_32}
\end{eqnarray}
uniformly on any bounded subset of $\{(x,t):x\in\R,t>0\}$.
\end{prop}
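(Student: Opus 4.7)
The plan is to establish (\ref{T_31}) and (\ref{T_32}) in parallel, since both quantities share the structure of a decoupled $(j,k)$-sum of products of two convolutions weighted by $e_n(x)$. The first step is to replace $e_n(x)$ by its mean $Z_n(x)$: write $e_n(x)=Z_n(x)+e_n^{\circ}(x)$. The Poincar\'e-inequality computation already carried out in (\ref{libby4}) gives $\mathbf{Var}\{(U_{jj}*U_{kk})(t_1)\}=O(t_1^2/n)$ uniformly in $j,k$, whence $\mathbf{Var}\{(U_{jj}*U_{kk})(t_1)(U_{jj}*U_{kk})(t_2)\}=O(t_1^2t_2^2/n)$ by the usual variance-of-product bound. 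Combined with $|e_n^{\circ}(x)|\leq 2$, Cauchy--Schwarz, and $\int|t_2||\widehat{\varphi_1}(t_2)|dt_2<\infty$ (available from $\|\varphi_1\|_s<\infty$ with $s>5.5$ via the argument around (\ref{Lib_Int})), the contribution of $e_n^{\circ}$ to $T_{31}$ is $O(n^{-1/2})$ uniformly on bounded $(x,t)$-regions; the analogous statement for $T_{32}$ is identical after replacing $B_1$ with $B_1\cap B_2$ and one copy of $U$ with $\widetilde U$.

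Once $e_n(x)$ has been replaced by $Z_n(x)$, I would expand the convolutions via (\ref{svertka}) to decouple the $j$- and $k$-sums. For $T_{31}$ this turns the double sum into
\begin{equation*}
\int_0^{t_1}\!\!\int_0^{t_2}\Big[\sum_{j\in B_1}U_{jj}(s_1)U_{jj}(s_2)\Big]\Big[\sum_{k\in B_1}U_{kk}(t_1-s_1)U_{kk}(t_2-s_2)\Big]ds_1ds_2,
\end{equation*}
and for $T_{32}$ into the analogous expression with $j,k\in B_1\cap B_2$ and the second $U$ in each bracket replaced by $\widetilde U$. The bounds (\ref{golo}) and (\ref{golo6}) (which apply to $\widetilde U$ verbatim) give $\mathbf{Cov}(U_{jj}(s_1),U_{jj}(s_2))=O(n^{-1})$ and $\mathbf{Cov}(U_{jj}(s_1),\widetilde U_{jj}(s_2))=O(n^{-1})$, so the joint expectations factor into products of marginals modulo $O(n^{-1})$ errors. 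The marginals, after smooth truncation of $e^{is\cdot}$ outside a neighborhood of the relevant semicircle support, converge by (\ref{golo7}) to $\E[U_{jj}(s)]\to v(s)$ for $j\in B_1$ and $\E[\widetilde U_{jj}(s)]\to\widetilde v(s)$ for $j\in B_2$. Averaging with $\tfrac{1}{n}|B_1|\to\gamma_1$ and $\tfrac{1}{n}|B_1\cap B_2|\to\gamma_{12}$, the bracketed sums produce $\gamma_1 v(s_1)v(s_2)$ and $\gamma_1 v(t_1-s_1)v(t_2-s_2)$ for $T_{31}$, respectively $\gamma_{12}v(s_1)\widetilde v(s_2)$ and $\gamma_{12}v(t_1-s_1)\widetilde v(t_2-s_2)$ for $T_{32}$. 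The $s_1$- and $s_2$-integrations then assemble into $(v*v)(t_1)(v*v)(t_2)$, respectively $(v*v)(t_1)(\widetilde v*\widetilde v)(t_2)$, while the outer $t_1$-integration yields $\int_0^t(v*v)(t_1)dt_1=I(t)$. Finally, the $t_2$-integral is evaluated using the explicit formula $t_2(v*v)(t_2)=\tfrac{1}{\pi\gamma_1^2}\int e^{it_2\mu}\tfrac{2\gamma_1-\mu^2}{\sqrt{4\gamma_1-\mu^2}}d\mu$ (and its $\widetilde v$-analogue) stated just before the proposition, Fubini, and Fourier inversion (\ref{eqn:invfour}), which identifies the remaining factor as $B_{\varphi_1}$ (respectively $B_{\varphi_2}$). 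Multiplying by the outer constants $i\kappa_4\alpha x$ (respectively $i\kappa_4\beta x$) and using $Z_n(x)\to Z(x)$ completes the identification of the two limits.

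The main obstacle I anticipate is obtaining these limits uniformly on compact subsets of $\{x\in\R,\ t>0\}$. The asymptotic $\E[f(M)_{jj}]\to\int f\,d\mu_{sc}$ from \cite{PRS} is stated for $f\in C^7_c(\R)$, whereas our integrand is $e^{is\cdot}$ with a continuous parameter $s$ appearing in the outer Fourier integral. After smooth truncation outside $[-2\sqrt{\gamma_l}-1,2\sqrt{\gamma_l}+1]$, the $C^7$-norm of the truncated exponential grows only polynomially in $|s|$, and the tail in $s$ is controlled by $(1+|s|)^4|\widehat{\varphi_l}(s)|\in L^1$, which follows from $s>5.5$ in Theorem \ref{thm-NG} by Cauchy--Schwarz. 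Dominated convergence then justifies exchanging the $n\to\infty$ limit with the $dt_2$-integration. The spectral-edge truncation itself introduces only a negligible error because the eigenvalues of $M^{(l)}$ concentrate in $[-2\sqrt{\gamma_l},2\sqrt{\gamma_l}]$ with overwhelming probability, exactly as used around (\ref{qrr}). The remaining estimates are manifestly continuous in $(x,t)$, and equicontinuity of the relevant sequences has been established earlier in the proof (compare (\ref{boundY_n})--(\ref{boundtildeY_n})), so pointwise convergence promotes to uniform convergence on bounded sets.
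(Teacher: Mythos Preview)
Your approach is essentially the same as the paper's: split $e_n(x)=Z_n(x)+e_n^\circ(x)$, show the $e_n^\circ$-contribution vanishes via entrywise variance bounds, decouple the $(j,k)$-sum into a product using $\E[U_{jj}(s)]\to v(s)$ and $\E[\widetilde U_{jj}(s)]\to\widetilde v(s)$, and identify the limit through the explicit formula for $t(v*v)(t)$ and Fourier inversion. The paper packages the decoupled sum as $v_n(t_1,t_2)=n^{-1}\sum_{j\in B_1\cap B_2}U_{jj}(t_1)\widetilde U_{jj}(t_2)$ and proves $\bar v_n(t_1,t_2)\to\gamma_{12}v(t_1)\widetilde v(t_2)$ as a separate sub-proposition, but this is purely organizational.

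One caution: you invoke the Poincar\'e-based estimates (\ref{libby4}), (\ref{golo}), (\ref{golo6}) from the Gaussian section, but the present proposition lives in the proof of Theorem \ref{thm-NG}, where the entries are not Gaussian and the Poincar\'e inequality is not available in that form. The paper instead cites \cite{ORS} for $\E U_{jj}(t)=v(t)+o(1)$ and $\mathbf{Var}\{U_{jj}(t)\}=o(1)$ (and the analogous statements for $\widetilde U$), uniformly in $j$ and on compacta. These weaker $o(1)$ bounds are all you actually need for your argument to go through, so the logic is intact; just replace the Poincar\'e references by the corresponding non-Gaussian entrywise asymptotics.
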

\begin{proof}
The proof of (\ref{T_31}) can be found in \cite{cite_key3}. And
\begin{eqnarray}
T_{32}&=&\frac{ix\kappa_4}{n^2}\int_0^t\sum_{j,k\in B_1\cap B_2}\int_0^{t_1}\int \int_0^{t_2}t_2\mathbb E\{U_{jj}(t_3)U_{kk}(t_1-t_3)\widetilde U_{jj}(t_4)\widetilde U_{kk}(t_2-t_4)e_n(x)\}\nonumber\\
&&\times\beta\widehat{\varphi_2}(t_2)dt_4dt_2dt_3dt_1\nonumber\\
&=&ix\kappa_4\int_0^t\int_0^{t_1}\int \int_0^{t_2}t_2\mathbb E\{v_n(t_3,t_4)v_n(t_1-t_3,t_2-t_4)e^{\circ}_n(x)\}\beta\widehat{\varphi_2}(t_2)dt_4dt_2dt_3dt_1\nonumber\\
&&+ix\kappa_4Z_n(x)\int_0^t\int_0^{t_1}\int \int_0^{t_2}t_2\mathbb E\{v_n(t_3,t_4)v_n(t_1-t_3,t_2-t_4)\}\beta\widehat{\varphi_2}(t_2)dt_4dt_2dt_3dt_1\nonumber\\
\end{eqnarray}
where
\begin{equation}
v_n(t_1,t_2)=n^{-1}\sum_{j\in B_1\cap B_2}U_{jj}(t_1)\widetilde U_{jj}(t_2).
\end{equation}
Then
\begin{equation}
|\E\{v_n(t_1,t_2)v_n(t_3,t_4)e^{\circ}_n(x)\}|\leq 4\E\{|v_n^{\circ}(t_1,t_2)|\}+4\E\{|v_n^{\circ}(t_3,t_4)|\},
\end{equation}
and
\begin{equation}
\E\{v_n(t_1,t_2)v_n(t_3,t_4)\}=\bar{v}_n(t_1,t_2)\bar{v}_n(t_3,t_4)+\E\{v_n(t_1,t_2)v^{\circ}_n(t_3,t_4)\},\end{equation}
where
\begin{equation}
\bar{v}_n(t_1,t_2)=\E\{v_n(t_1,t_2)\}.
\end{equation}
\begin{prop}
\[\bar{v}_n(t_1,t_2)=\gamma_{12}v(t_1)\widetilde v(t_2)+o(1),\]
uniformly on any compact set of $\R^2$.
\end{prop} 
\begin{proof}
Indeed, $\E\{U_{jj}(t_1)\* \widetilde{U}_{jj}(t_2)\}= v(t_1)\*\widetilde{v}(t_2) +o(1)$
uniformly in $1\leq j\leq n$ and $t_1, t_2$ from a compact set of $\R^2$, which follows from
\begin{eqnarray*}
 &&\E U_{jj}(t)=v(t) +o(1), \ \ \mathbf{Var} \{U_{jj}(t)\}=o(1),\\ 
 &&\E \widetilde U_{jj}(t)=\widetilde v(t) +o(1), \ \ \mathbf{Var} \{\widetilde U_{jj}(t)\}=o(1)
\end{eqnarray*} 
(see e.g. \cite{ORS}).
\end{proof}


So the limit of $T_{32}$ is
\[ix\kappa_4Z(x)\gamma_{12}^2\int_0^tv*v(t_1)dt_1\int_{-\infty}^{\infty} t_2\beta\widehat{\varphi_2}(t_2)\widetilde v*\widetilde v(t_2)dt_2=ix\kappa_4Z(x)\gamma_{12}^2I(t)\beta B_{\varphi_2}.\]
\end{proof}
So if $Y(x,t)=\lim_{n\to\infty}Y_n(x,t)$, then $Y(x,t)$ satisfies
\[Y(x,t)+2\gamma_1\int_{-\infty}^{\infty}dt_1\int_0^{t_1}v(t_1-t_2)Y(x,t_2)dt_2=xZ(x)\left[A(t)+i\kappa_4I(t)(\alpha \gamma_1^2B_{\varphi_1}+\beta\gamma_{12}^2 B_{\varphi_2})\right].\]
Therefore, if let $Y^*(x,t)$ be the solution of 
\[Y(x,t)+2\gamma_1\int_{-\infty}^{\infty} dt_1\int_0^{t_1}v(t_1-t_2)Y(x,t_2)dt_2=xZ(x)A(t),\]
then
\begin{equation}
Y(x,t)=Y^*(x,t)+\frac{i\kappa_4xZ(x)}{2\pi \gamma_1^2}\left[\alpha \gamma_1^2B_{\varphi_1}+\beta\gamma_{12}^2 B_{\varphi_2}\right]\int_{-2\sqrt{\gamma_1}}^{2\sqrt{\gamma_1}}\frac{e^{it\lambda}(2\gamma_1-\lambda^2)}{\sqrt{4\gamma_1-\lambda^2}}d\lambda.
\end{equation}
Symmetrically,
\begin{equation}
\widetilde Y(x,t)=\widetilde Y^*(x,t)+\frac{i\kappa_4xZ(x)}{2\pi\gamma_2^2}\left[\alpha\gamma_{12}^2 B_{\varphi_1}+\beta \gamma_2^2B_{\varphi_2}\right]\int_{-2\sqrt{\gamma_2}}^{2\sqrt{\gamma_2}}\frac{e^{it\lambda}(2\gamma_2-\lambda^2)}{\sqrt{4\gamma_2-\lambda^2}}d\lambda.
\end{equation}
Therefore,
\begin{eqnarray}
Z'(t)&=&i\alpha\int_{-\infty}^{\infty}\widehat{\varphi_1}(t)Y(x,t)dt+i\beta\int_{-\infty}^{\infty}\widehat{\varphi_2}(t)\widetilde Y(x,t)dt\nonumber\\
&=&-xVZ(x)-\alpha\frac{\kappa_4xZ(x)}{2\pi \gamma_1^2}\int_{-\infty}^{\infty}\widehat {\varphi_1}(t)\left[\alpha\gamma_1^2B_{\varphi_1}+\beta\gamma_{12}^2 B_{\varphi_2}\right]\int_{-2\sqrt{\gamma_1}}^{2\sqrt{\gamma_1}}\frac{e^{it\lambda}(2\gamma_1-\lambda^2)}{\sqrt{4\gamma_1-\lambda^2}}d\lambda dt\nonumber\\
&&-\beta\frac{\kappa_4xZ(x)}{2\pi\gamma_2^2}\int_{-\infty}^{\infty}\widehat {\varphi_2}(t)\left[\alpha\gamma_{12}^2 B_{\varphi_1}+\beta\gamma_2^2B_{\varphi_2}\right]\int_{-2\sqrt{\gamma_2}}^{2\sqrt{\gamma_2}}\frac{e^{it\lambda}(2\gamma_2-\lambda^2)}{\sqrt{4\gamma_2-\lambda^2}}d\lambda dt\nonumber\\
&=&-xVZ(x)-\alpha^2\frac{xZ(x)}{2} \gamma_1\gamma_2B_{\varphi_1}^2-\alpha\beta xZ(x)\gamma_{12}^2B_{\varphi_1}B_{\varphi_2}-\beta^2\frac{xZ(x)}{2} \gamma_1\gamma_2B_{\varphi_2}^2\nonumber\\
&=&-xVZ(x)-x\kappa_4Z(x)\left[\alpha^2\gamma_1^2\frac{B_{\varphi_1}^2}{2} +\alpha\beta \gamma_{12}^2B_{\varphi_1}B_{\varphi_2}+\beta^2\gamma_2^2\frac{B_{\varphi_2}^2}{2} \right]
\end{eqnarray}
where
\[V=
\alpha^2\mathbf{\mathbf{Var}}(G_1)  + 2\alpha\beta\mathbf{Cov}(G_1,G_2)+\beta^2\mathbf{Var}(G_2),\]
and $G_1, G_2$ are the random variables in Theorem \ref{thm-GaussCLT} with $d=2$.

Therefore,
\begin{eqnarray}
\lefteqn{\lim_{n\to\infty}\mathbf{Cov}(\mathcal N^{(1)\circ}_{n}[\varphi_1],\mathcal N^{(2)\circ}_{n}[\varphi_2])=}\nonumber\\
&&\mathbf{Cov}(G_1,G_2)+\frac{\gamma_{12}^2\kappa_4}{2\pi^2\gamma_1^2\gamma_2^2 }\int_{-2\sqrt{\gamma_1}}^{2\sqrt{\gamma_1}}\varphi_1(\mu)\frac{2\gamma_1-\mu^2}{\sqrt{4\gamma_1-\mu^2}}d\mu
\int_{-2\sqrt{\gamma_2}}^{2\sqrt{\gamma_2}}\varphi_2(\mu)\frac{2\gamma_2-\mu^2}{\sqrt{4\gamma_2-\mu^2}}d\mu.
\end{eqnarray}

By symmetry, for any $1\leq l\leq p\leq n$,
\begin{eqnarray}
\mathbf{Cov}(\tilde G_{l}, \tilde G_{p}) =\mathbf{Cov}(G_{l}, G_{p})+\frac{\gamma_{lp}^2\kappa_4}{2\pi^2\gamma_l^2\gamma_p^2 }\int_{-2\sqrt{\gamma_l}}^{2\sqrt{\gamma_l}}\varphi_l(\lambda)\frac{2\gamma_l-\lambda^2}{\sqrt{4\gamma_l-\lambda^2}}d\lambda
\int_{-2\sqrt{\gamma_p}}^{2\sqrt{\gamma_p}}\varphi_p(\mu)\frac{2\gamma_p-\mu^2}{\sqrt{4\gamma_p-\mu^2}}d\mu
\end{eqnarray}


\section{Appendix 1}
\begin{thm}[Decoupling Formula]\cite{cite_key3}
Let $\xi$ be a random variable such that $\E\{|\xi|^{p+2}\}<\infty$ for a certain nonnegative integer p. Then for any function $f:\mathbb{R}\to \mathbb{C}$ of the class $C^{p+1}$ with bounded derivatives $f^{(l)},l=1,...,p+1,$ we have
\begin{equation}
\E\{\xi f(\xi)\}=\sum_{l=0}^p\frac{\kappa_{l+1}}{l!}\E\{f^{(l)}(\xi)\}+\varepsilon_p.
\label{decf}
\end{equation}
where $\kappa_l$ denotes the $l$th cumulant of $\xi$ and the remainder term $\varepsilon_p$ admits the bound
\begin{equation}
|\varepsilon_p|\leq C_p\E\{|\xi|^{p+2}\}\sup_{t\in \mathbb{R}}f^{(p+1)}(t), \ C_p\leq\frac{1+(3+2p)^{p+2}}{(p+1)!}.
\label{epsilon}
\end{equation}
If $\xi$ is a Gaussian random variable with zero mean,
\begin{equation}
\E\{\xi f(\xi)\}=\E\{\xi^2\}\E\{f'(\xi)\}.
\label{dfgaus}
\end{equation}
\end{thm}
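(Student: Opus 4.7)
The Gaussian case (\ref{dfgaus}) is an instance of Stein's lemma. Writing $\rho_{\sigma}$ for the density of $\xi$ with $\sigma^{2} = \E[\xi^{2}]$, the elementary identity $x\rho_{\sigma}(x) = -\sigma^{2}\rho_{\sigma}'(x)$ combined with integration by parts in $\int x f(x)\rho_{\sigma}(x)\,dx$ — justified by the rapid decay of $\rho_{\sigma}$ together with the boundedness of $f$ — immediately yields $\E[\xi f(\xi)] = \sigma^{2}\E[f'(\xi)]$.

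For the general case (\ref{decf}) my plan proceeds in two stages. \textbf{Stage 1:} verify the identity \emph{without} any remainder for polynomial $f$ of degree at most $p$. By linearity it suffices to take $f(x) = x^{k}$ with $0 \le k \le p$; then the formula reduces to the classical moment--cumulant recursion
\[
\mu_{k+1} \;=\; \sum_{l=0}^{k}\binom{k}{l}\kappa_{l+1}\,\mu_{k-l}, \qquad \mu_{j} := \E[\xi^{j}].
\]
This recursion is obtained by differentiating $\phi'(t) = \phi(t)\psi'(t)$, where $\phi(t) = \E[e^{it\xi}]$ and $\psi(t) = \log \phi(t)$, and matching coefficients of $(it)^{k}/k!$ on both sides. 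The moment hypothesis $\E|\xi|^{p+2} < \infty$ guarantees that $\phi$ and $\psi$ are of class $C^{p+2}$ near $0$, so $\mu_{j}$ for $j \le p+2$ and $\kappa_{l}$ for $l \le p+2$ are all well defined and the bookkeeping is legitimate.

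\textbf{Stage 2:} reduce a general $f \in C^{p+1}$ with bounded derivatives to the polynomial case via Taylor expansion. Write $f = P + R$ where $P$ is the degree-$p$ Taylor polynomial of $f$ at the origin. Lagrange's form of the remainder gives
\[
|R^{(l)}(x)| \le \sup_{t \in \R}|f^{(p+1)}(t)|\,\frac{|x|^{p+1-l}}{(p+1-l)!}, \qquad 0 \le l \le p,
\]
and similarly $|R(x)| \le \sup_{t}|f^{(p+1)}(t)|\,|x|^{p+1}/(p+1)!$. Applying Stage~1 to $P$ and subtracting from the target identity for $f$ produces
\[
\varepsilon_{p} \;=\; \E[\xi R(\xi)] \;-\; \sum_{l=0}^{p}\frac{\kappa_{l+1}}{l!}\,\E[R^{(l)}(\xi)].
\]
The first term is bounded directly by $\sup|f^{(p+1)}|\cdot\E|\xi|^{p+2}/(p+1)!$, and each summand is bounded in terms of $|\kappa_{l+1}|$ and $\E|\xi|^{p+1-l}$.

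The principal obstacle is to assemble these bounds into the explicit constant $C_{p} \le (1 + (3+2p)^{p+2})/(p+1)!$. This requires classical estimates expressing cumulants in terms of absolute moments via the Leonov--Shiryaev formula (an alternating sum over set partitions), which yields bounds of the form $|\kappa_{l+1}| \le c_{l}\,\max_{j \le l+1}\E|\xi|^{j}$ with explicit combinatorial $c_{l}$. Combining with the Jensen-type inequality $\E|\xi|^{j} \le 1 + \E|\xi|^{p+2}$ for $j \le p+2$ absorbs every intermediate moment into a single factor $\E|\xi|^{p+2}$; the polynomial growth $(3+2p)^{p+2}$ then reflects the total count of set partitions together with the binomial coefficients $\binom{k}{l}$ from the Taylor reorganisation, while the factor $1/(p+1)!$ is inherited from the Lagrange remainder. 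Tracking these constants carefully delivers the stated bound (\ref{epsilon}).
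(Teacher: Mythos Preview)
The paper does not actually prove this theorem: it is stated in the appendix with citations to Khorunzhy and to Lytova--Pastur, and no argument is given. So there is no ``paper's proof'' to compare against; the result is imported as a black box.

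Your sketch is the standard route found in those references and is essentially sound. Two small corrections are worth noting. First, your Jensen-type step $\E|\xi|^{j}\le 1+\E|\xi|^{p+2}$ would produce an additive $1$ that is \emph{not} present in the stated bound; the clean way is Lyapunov's inequality $\E|\xi|^{j}\le\bigl(\E|\xi|^{p+2}\bigr)^{j/(p+2)}$, which, when combined with the moment--product structure of cumulants (Leonov--Shiryaev), gives $|\kappa_{l+1}|\cdot\E|\xi|^{p+1-l}\le B_{l}\,\E|\xi|^{p+2}$ with no additive constant, since the exponents $(l+1)/(p+2)$ and $(p+1-l)/(p+2)$ sum to $1$. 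Second, you should acknowledge that the precise form $C_{p}\le\bigl(1+(3+2p)^{p+2}\bigr)/(p+1)!$ is specific to the argument in \cite{cite_key3}; your combinatorial bookkeeping will yield \emph{some} explicit $C_{p}$ of comparable growth, but matching the exact constant requires following their organisation of the Taylor and partition sums rather than your looser Bell-number estimate. For the purposes of this paper (where only the existence of a finite $C_{p}$ matters), your outline is adequate.
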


\section{Appendix 2}

Below we prove Proposition \ref{prop_diag} formulated in Section 3.
\begin{proof}
Since $  \langle x^{k}, \ x^{q} \rangle_{lr} = 0$ if $k+q$ is odd, it follows by linearity that
\begin{equation}
\langle U^{\gamma_{l}}_{k}, U^{\gamma_{r}}_{q} \rangle_{lr} = 0, \ \ \text{if} \ k+q \ \text{is odd.}
\end{equation}
We begin by computing $ \langle (\frac{x}{2\sqrt{\gamma_{l}}})^{2k}, U^{\gamma_{r}}_{2q} \rangle_{lr}$ and 
$ \langle (\frac{x}{2\sqrt{\gamma_{l}}})^{2k+1}, U^{\gamma_{r}}_{2q+1} \rangle_{lr}$. We obtain
\begin{eqnarray}
\lefteqn{\langle (\frac{x}{2\sqrt{\gamma_{l}}})^{2k}, U^{\gamma_{r}}_{2q}  \rangle_{lr} }\nonumber\\
&=&  (\frac{1}{\sqrt{\gamma_{l}}})^{2k} \langle x^{2k}, U^{\gamma_{r}}_{2q}(x) \rangle_{lr}\nonumber\\
&=&\gamma^{-k}_{l}\sum^{q}_{p=0} (-1)^{p} (\frac{1}{\sqrt{\gamma_{l}}})^{2q-2p}\binom{2q-p}{p}\langle x^{2k}, x^{2q-2p}  \rangle_{lr}\nonumber\\
&=&\frac{\gamma^{-k}_{l}\gamma^{-q}_{r}}{2k+1}\sum^{k}_{j=0}\sum^{q-j}_{p=0} \frac{(-1)^{p}\gamma^{p}_{l}(2j+1)^{2}}{2q-2p+1} \binom{2k+1}{k+j+1}\binom{2q-p}{p}\binom{2q-2p+1}{q-p+j+1}\gamma^{k-j}_{l}\gamma^{q-p-j}_{r}\gamma^{2j+1}_{lr}\nonumber\\
&=&\frac{1}{2k+1}\sum^{k}_{j=0}(2j+1)^{2}\binom{2k+1}{k+j+1}\left[\sum^{q-j}_{p=0}\frac{(-1)^{p}(2q-p)!}{p!(q-p+j+1)!(q-p-j)!}\right]\gamma^{-j}_{l}\gamma^{-j}_{r}\gamma^{2j+1}_{lr}\nonumber\\
\end{eqnarray}
and
\begin{eqnarray}
\lefteqn{\langle (\frac{x}{2\sqrt{\gamma_{l}}})^{2k+1}, U^{\gamma_{r}}_{2q+1} \rangle_{lr}}\nonumber\\
 &=&  (\frac{1}{\sqrt{\gamma_{l}}})^{2k+1} \langle x^{2k+1}, U^{\gamma_{r}}_{2q+1}(x) \rangle\nonumber\\
&=&(\frac{1}{\sqrt{\gamma_{l}}})^{2k+1}\sum^{q}_{p=0} (-1)^{p} (\frac{1}{\sqrt{\gamma_{r}}})^{2q-2p+1}\binom{2q-p+1}{p}\langle x^{2k+1}, x^{2q-2p+1}  \rangle_{lr}\nonumber\\
&=&\frac{\gamma^{-\frac{1}{2}}_{l}\gamma^{-\frac{1}{2}}_{r}}{2k+2}\sum^{k}_{j=0}\sum^{q-j}_{p=0} \frac{(-1)^{p}\gamma^{p}_{r}(2j+2)^{2}}{2q-2p+2} \binom{2k+2}{k+j+2}\binom{2q-p+1}{p}\binom{2q-2p+2}{q-p+j+2}\gamma^{-j}_{l}\gamma^{-p-j}_{r}\gamma^{2j+2}_{lr}\nonumber\\
&=&\frac{\gamma^{-\frac{1}{2}}_{l}\gamma^{-\frac{1}{2}}_{r}}{2k+2}\sum^{k}_{j=0}(2j+2)^{2}\binom{2k+2}{k+j+2}\left[\sum^{q-j}_{p=0}\frac{(-1)^{p}(2q-p+1)!}{p!(q-p+j+2)!(q-p-j)!}\right]\gamma^{-j}_{l}\gamma^{-j}_{r}\gamma^{2j+2}_{lr}.\nonumber\\
\end{eqnarray}

Denote by
\begin{equation}
H_{1}(q,j) = \sum^{q-j}_{p=0}\frac{(-1)^{p}(2q-p)!}{p!(q-p+j+1)!(q-p-j)!},
\label{eqn:H1}
\end{equation}
\begin{equation}
H_{2}(q,j) = \sum^{q-j}_{p=0}\frac{(-1)^{p}(2q-p+1)!}{p!(q-p+j+2)!(q-p-j)!}.
\label{eqn:H2}
\end{equation}
Then 
\begin{equation}
\langle (\frac{x}{2\sqrt{\gamma_{l}}})^{2k}, U^{\gamma_{r}}_{2q}  \rangle_{lr} = \frac{1}{2k+1}\sum^{k}_{j=0}(2j+1)^{2}\binom{2k+1}{k+j+1}H_{1}(q,j) \gamma^{-j}_{l}\gamma^{-j}_{r}\gamma^{2j+1}_{lr},
\end{equation}
\begin{equation}
\langle (\frac{x}{2\sqrt{\gamma_{l}}})^{2k+1}, U^{\gamma_{r}}_{2q+1} \rangle_{lr} = \frac{\gamma^{-\frac{1}{2}}_{l}\gamma^{-\frac{1}{2}}_{r}}{2k+2}\sum^{k}_{j=0}(2j+2)^{2}\binom{2k+2}{k+j+2} H_{2}(q,j)\gamma^{-j}_{l}\gamma^{-j}_{r}\gamma^{2j+2}_{lr}.
\end{equation}

It follows from (\ref{eqn:H1}-\ref{eqn:H2}) that
\begin{equation}
H_{1}(q,j) = \frac{(2q)!}{(q-j)!(q+j+1)!} = {}_2F_1\left(\begin{array}{c}-(q-j),-(q+j+1)\\-2q\end{array};1\right),
\end{equation}
\begin{equation}
H_{2}(q,j) = \frac{(2q+1)!}{(q-j)!(q+j+1)!} = {}_2F_1\left(\begin{array}{c}-(q-j),-(q+j+2)\\-(2q+1)\end{array};1\right),
\end{equation}
where $_2 F_1$ is a hypergeometric function. See \cite{cite_key5} for the definition of hypergeometric functions. Below let $(x)_{n} = x(x+1)\cdots(x+n-1)$ denote the rising factorial.  By the Chu-Vandermonde identity (see e.g. \cite{cite_key5}), it follows that

\begin{equation}
H_{1}(q,j) = \frac{(2q)!}{(q-j)!(q+j+1)!}  \ \frac{(j-q+1)_{q-j}}{(-2q)_{q-j}} = \left\{
     \begin{array}{lr}
       0 &  0 \leq j < q\\
       \frac{1}{2q+1} & j=q
     \end{array}
   \right.
\end{equation}
\begin{equation}
H_{2}(q,j) = \frac{(2q+1)!}{(q-j)!(q+j+2)!}  \ \frac{(j-q+1)_{q-j}}{(-2q-1)_{q-j}} = \left\{
     \begin{array}{lr}
       0 &  0 \leq j < q\\
       \frac{1}{2q+2} & j=q
     \end{array}
   \right.
\end{equation}
Therefore, for $ k = 0, 1, \cdots, q-1, $ we get that  $\langle (\frac{x}{2\sqrt{\gamma_{l}}})^{2k}, U^{\gamma_{r}}_{2q} \rangle_{lr} =0$ and also $ \ \langle (\frac{x}{2\sqrt{\gamma_{l}}})^{2k+1}, U^{\gamma_{r}}_{2q+1} \rangle_{lr} = 0$. With $ k=q$ we obtain
\begin{eqnarray}
\langle (\frac{x}{2\sqrt{\gamma_{l}}})^{2k}, U^{\gamma_{r}}_{2k} \rangle_{lr} &=& \frac{1}{2k+1}\sum^{k}_{j=0}(2j+1)^{2}\binom{2k+1}{k+j+1}H_{1}(k,j) \gamma^{-j}_{l}\gamma^{-j}_{r}\gamma^{2j+1}_{lr}\nonumber\\
&=&\frac{(2k+1)^{2}}{2k+1} \binom{2k+1}{2k+1}H_{1}(k,k)\gamma^{-k}_{l}\gamma^{-k}_{r}\gamma^{2k+1}_{lr}\nonumber\\
&=&\frac{\gamma^{2k+1}_{lr}}{\gamma^{k}_{l}\gamma^{k}_{r}}
\end{eqnarray}
and
\begin{eqnarray}
\langle (\frac{x}{2\sqrt{\gamma_{l}}})^{2k+1}, U^{\gamma_{r}}_{2k+1} \rangle_{lr} &=& \frac{\gamma^{-\frac{1}{2}}_{l}\gamma^{-\frac{1}{2}}_{r}}{2k+2}\sum^{k}_{j=0}(2j+2)^{2}\binom{2k+2}{k+j+2}H_{2}(k,j) \gamma^{-j}_{l}\gamma^{-j}_{r}\gamma^{2j+2}_{lr}\nonumber\\
&=&\gamma^{-\frac{1}{2}}_{l}\gamma^{-\frac{1}{2}}_{r}\frac{(2k+1)^{2}}{2k+1} \binom{2k+1}{2k+1}H_{2}(k,k)\gamma^{-k}_{l}\gamma^{-k}_{r}\gamma^{2k+1}_{lr}\nonumber\\
&=&\frac{\gamma^{2k+2}_{lr}}{\sqrt{\gamma_{l}\gamma_{r}}^{2k+1}}.
\end{eqnarray}

Thus, for $ k < q$,
\begin{equation}
\langle U^{\gamma_{l}}_{2k}, U^{\gamma_{r}}_{2q} \rangle_{lr} = 0, \ \ \langle U^{\gamma_{l}}_{2k+1}, U^{\gamma_{r}}_{2q+1} \rangle_{lr} = 0,
\end{equation}
and for $ k=q$
\begin{equation}
\langle U^{\gamma_{l}}_{2k}, U^{\gamma_{r}}_{2k} \rangle_{lr} = \langle (\frac{x}{2\sqrt{\gamma_{l}}})^{2k}, U^{\gamma_{r}}_{2k} \rangle_{lr} = \frac{\gamma^{2k+1}_{lr}}{\gamma^{k}_{l}\gamma^{k}_{r}},
\end{equation}
\begin{equation}
\langle U^{\gamma_{l}}_{2k+1}, U^{\gamma_{r}}_{2k+1} \rangle_{lr} = \langle (\frac{x}{2\sqrt{\gamma_{l}}})^{2k+1}, U^{\gamma_{r}}_{2k+1} \rangle_{lr} = \frac{\gamma^{2k+2}_{lr}}{\sqrt{\gamma_{l}\gamma_{r}}^{2k+1}}.
\end{equation}
This completes the proof of proposition \ref{prop_diag}, which is the diagonalization part of lemma \ref{lem:diag}.
\end{proof}

\section{Appendix 3}
Below we prove Proposition \ref{lem:exten} formulated in Section 3.

\begin{proof}
First it will be argued by approximation that $ \langle \cdot, \cdot \rangle_{lr}$ can be extended to the class of functions $ \mathcal{H}_{\frac{3}{2} + \epsilon}$, and then the bilinear form will be explicitly computed. It will be sufficient to approximate $f,g$ below by truncated polynomials with rational coefficients in $ \mathcal{H}_{\frac{3}{2} + \epsilon}$, because of the estimate \ref{ineq-MS} . Recall that functions of the Schwartz class  are dense in $\mathcal{H}_{s}$, so after a triangle inequality argument it is in fact sufficient to suppose that $f,g \in \mathcal{S}(\mathbb{R})$. Let $ h \in \mathcal{C}^{\infty}_{c}$ be a function so that $ h(x) = 1$ for $ x \in [-3, 3],  \ h(x) = 0$ for $ x \notin [-4,4]$ and is smoothly interpolated in between.  Note that with overwhelming probability, the eigenvalues of the submatrices concentrate in the support of $\mu_{sc}$. As a consequence we may suppose that $f,g$ are supported in $[-3,3]$. 
\indent
We give a density argument. It is sufficient to argue that $ || hf - hp_{j}||_{\frac{3}{2} + \epsilon}$ and $ || hg - hq_{j}||_{\frac{3}{2} + \epsilon}$ converge to $0$ as $j \to \infty$, where $\{p_{j}\}, \{q_{j}\}$ are appropriately chosen sequences of polynomials with rational coefficients. Note that $hf = f$ and $hg = g$. We now focus on estimating $ || f - hp_{j}||_{\frac{3}{2} + \epsilon}$. Since $f$ is a Schwartz function, we have that $f \in \mathcal{H}_{2}$. We note that 
 \begin{equation}
\int^{\infty}_{-\infty} |\widehat{f}(t)|^{2}\left(1 + |t|\right)^{3+\epsilon} dt  \leq \int^{\infty}_{-\infty} |\widehat{f}(t)|^{2}\left(1 + |t|\right)^{4} dt, 
\end{equation}
so it will be sufficient to approximate $f$ in the larger $||\cdot ||_{2}$ norm. Also, since 
\begin{equation}
||f||^{2}_{2} = \int^{\infty}_{-\infty} |\widehat{f}(t)|^{2}\left(1 + |t|\right)^{4} dt \leq Const \left[ \int^{\infty}_{-\infty} |\widehat{f}(t)|^{2}dt + \int^{\infty}_{-\infty} t^{4}|\widehat{f}(t)|^{2}dt \right],
\end{equation}
we only need to approximate the two terms on the right hand side. Consider polynomials $\{p_{j}\}$ with rational coefficients so that $\sup_{-4 \leq x \leq 4} | f''(x) - p_{j}(x)| \to 0$ as $ j \to \infty$. Then denote by $ \tilde{p}_{j}(x) = \int^{x}_{-4} p_{j}(t)dt$, and  $\tilde{ \tilde{p}}_{j}(x) = \int^{x}_{-4} \tilde{p}_{j}(t)dt$.  As a consequence of Parseval's theorem, it will be sufficient to show that

\begin{equation}
 || f - h\tilde{\tilde{p}}_{j} ||_{L^{2}([-4,4])} \to 0 \  \text{and } \ || f'' - (h\tilde{\tilde{p}}_{j})'' ||_{L^{2}([-4,4])} \to 0,  \ \text{as} \ j \to \infty.
\end{equation} 
 
 But observe that 
\begin{equation}
 || f'' - (h\tilde{\tilde{p}}_{j})''  ||_{L^{2}([-4,4])} \ \leq \ || f'' - hp_{j}  ||_{L^{2}([-4,4])} +  || h''\tilde{\tilde{p}}_{j} + 2h'\tilde{p}_{j}||_{L^{2}([-4,4])}. 
\end{equation}

The first term on the right hand side converges to $0$ because of the uniform approximation. Noting that $h'(x) = 0$ and $ h''(x) = 0$ on $(-3,3)$, and also that $ \tilde{p}_{j}$ and $ \tilde{\tilde{p}}_{j}$ converge to $0$ uniformly on $[-4,-3) \cup (3,4]$, it follows that the second term on the right hand side converges to $0$ as well. Finally we observe that
\begin{eqnarray}
|| f - h\tilde{\tilde{p}}_{j} ||^{2}_{L^{2}([-4,4])} \nonumber &=& \int^{4}_{-4} |f(x) - h(x)\tilde{\tilde{p}}_{j}(x)|^{2}dx\\ 
&\leq& \int^{4}_{-4} h^{2}(x) \left| \int^{x}_{-4}\int^{t}_{-4} [ f''(u) - p_{j}(u)]dudt\right|^{2}dx \nonumber\\
&\leq& Const \cdot \left(\sup_{-4\leq u \leq 4} \left|   f''(u) - p_{j}(u)   \right|\right)^{2}
\end{eqnarray}
It follows that $ || f - h\tilde{\tilde{p}}_{j} ||^{2}_{L^{2}([-4,4])} \to 0$ because of the uniform approximation. This completes the approximation argument, so we now turn toward computing the bilinear form.

\indent
Setting
\begin{equation}
f_{k} = \frac{1}{2\pi \gamma_{l}} \int^{2\sqrt{\gamma_{l}}}_{-2\sqrt{\gamma_{l}}}f(x)U^{\gamma_{l}}_{k}(x)\sqrt{4\gamma_{l}-x^{2}}dx, \ \ g_{k} = \frac{1}{2\pi \gamma_{r}} \int^{2\sqrt{\gamma_{r}}}_{-2\sqrt{\gamma_{r}}}g(y)U^{\gamma_{r}}_{k}(y)\sqrt{4\gamma_{r}-y^{2}}dy,
\end{equation}
it follows that
\begin{eqnarray}
\lefteqn{\langle f, g \rangle_{lr}}\nonumber\\
  &=&\langle \ \sum^{\infty}_{k=0}f_{k}U^{\gamma_{l}}_{k}(x), \ \sum^{\infty}_{p=0} g_{p} U^{\gamma_{r}}_{p}(x) \ \rangle_{lr}\nonumber\\
&=& \sum^{\infty}_{k=0}\sum^{\infty}_{p=0}f_{k}g_{p}\langle U^{\gamma_{l}}_{k}, U^{\gamma_{r}}_{p} \rangle_{lr}\nonumber\\
&=&\sum^{\infty}_{k=0}f_{k}g_{k}\frac{\gamma^{k+1}_{lr}}{\gamma^{k/2}_{l}\gamma^{k/2}_{r}}\nonumber\\
&=& \frac{1}{4\pi^{2}\gamma_{l}\gamma_{r}} \int^{2\sqrt{\gamma_{l}}}_{-2\sqrt{\gamma_{l}}} \int^{2\sqrt{\gamma_{r}}}_{-2\sqrt{\gamma_{r}}} f(x)g(y)\left[ \sum^{\infty}_{k=0}  U^{\gamma_{l}}_{k}(x)U^{\gamma_{r}}_{k}(y)\frac{\gamma^{k+1}_{lr}}{\gamma^{k/2}_{l}\gamma^{k/2}_{r}} \right] \sqrt{4\gamma_{l}-x^{2}}\sqrt{4\gamma_{r}-y^{2}}dydx.\nonumber
\end{eqnarray}
It also follows, using (\ref{cheby-lim-as}),  that a.s.
\begin{eqnarray}
\lefteqn{\lim_{n \rightarrow \infty} \frac{1}{n} \Tr\left\{ P^{(l)} f(M^{(l)})\cdot P^{(l,r)}\cdot g(M^{(r)})P^{(r)} \right\}}\nonumber\\
&=& \frac{1}{4\pi^{2}\gamma_{l}\gamma_{r}} \int^{2\sqrt{\gamma_{l}}}_{-2\sqrt{\gamma_{l}}} \int^{2\sqrt{\gamma_{r}}}_{-2\sqrt{\gamma_{r}}} f(x)g(y)\left[ \sum^{\infty}_{k=0}  U^{\gamma_{l}}_{k}(x)U^{\gamma_{r}}_{k}(y)\frac{\gamma^{k+1}_{lr}}{\gamma^{k/2}_{l}\gamma^{k/2}_{r}} \right] \sqrt{4\gamma_{l}-x^{2}}\sqrt{4\gamma_{r}-y^{2}}dydx.\nonumber\\
\end{eqnarray}
Proposition \ref{lem:exten} follows.
\end{proof}


\bibliographystyle{plain}

\end{document}